\theoremstyle{plain}
\newtheorem{theorem}{Theorem}[section]
\newtheorem{proposition}[theorem]{Proposition}
\newtheorem{lemma}[theorem]{Lemma}
\theoremstyle{definition}
\newtheorem{definition}[theorem]{Definition}
\newtheorem{remark}[theorem]{Remark}
\numberwithin{equation}{section}
\def\varpi{t}
\def\sgn{{\rm sgn}}
\def\im{\, {\rm Im}\, \tau}
\def\({\left(}
\def\){\right)}
\def\[{\left[}
\def\]{\right]}
\def\<{\left\langle}
\def\>{\right\rangle}
\newcommand{\de}{\mathrm{d}}
\newcommand{\De}{\mathrm{D}}
\newcommand{\La}{\Lambda}
\newcommand{\Si}{\Sigma}
\DeclareSymbolFont{AMSa}{U}{msa}{m}{n}
\DeclareSymbolFont{AMSb}{U}{msb}{m}{n}
\DeclareMathSymbol{\fieldR}{\mathalpha}{AMSb}{"52}
\newcommand{\cI}{\mathcal{I}}
\newcommand{\cA}{\mathcal{A}}
\newcommand{\nn}{\nonumber}
\newcommand{\eps}{\varepsilon}
\newcommand{\IC}{\mathbb{C}}
\newcommand{\IZ}{\mathbb{Z}}
\newcommand{\IH}{\mathbb{H}}
\newcommand{\IP}{\mathbb{P}}
\def\bea{\begin{eqnarray}}
\def\eea{\end{eqnarray}}
\def\be{\begin{equation}}
\def\ee{\end{equation}}
\def\ba{\begin{align}}
\def\ea{\end{align}}
\def\bse{\begin{subequations}}
\def\ese{\end{subequations}}
\def\ba{\bar a}
\def\bk{\bar k}
\def\bZ{\bar Z}
\newcommand{\cB}{\mathcal{B}}
\def\cij#1{c}
\def\ci#1{c}
\def\XXint#1#2#3{{\setbox0=\hbox{$#1{#2#3}{\int}$}
\vcenter{\hbox{$#2#3$}}\kern-.5\wd0}}
\def\gamD#1{\tilde\gamma}
\DeclareMathOperator{\ch}{ch}
\DeclareMathOperator{\rk}{rk}
\def\cl0{\tilde c_0}
\newcommand{\bfK}{{\boldsymbol K}}
\def\mat#1{\ensuremath{#1}\xspace}
\def\dmat#1#2{\gdef#1{\mat{#2}}}
\def\dmata#1#2{\csdef{#1}{\mat{#2}}}
\def\oper#1{\dmata{#1}{\operatorname{#1}}}
\def\defbb#1{\dmata{b#1}{\mathbb{#1}}}
\def\defcal#1{\dmata{c#1}{\mathcal{#1}}}
\def\defbf#1{\dmata{bf#1}{\mathbf{#1}}}
\def\redef#1{\csletcs{t@#1}{#1}\dmata{#1}{\csuse{t@#1}}}
\forcsvlist\defbb{A,B,C,D,E,F,G,H,I,J,K,L,M,N,O,P,Q,R,S,T,U,V,W,X,Y,Z}
\forcsvlist\defcal{A,B,C,D,E,F,G,H,I,J,K,L,M,N,O,P,Q,R,S,T,U,V,W,X,Y,Z}
\forcsvlist\redef{Phi,mu,infty}
\def\fM{\mathfrak M}
\dmat\rM{{\mathrm M}}
\dmat\rG{{\mathrm G}}
\dmat\Omb{\bar\Om}
\dmat\bk{\Bbbk}
\dmat\la{\lambda}
\forcsvlist\oper{Ext,Hom,td,Exp,coker,Coh,Vect,ch,Aut,Var,IH,im,Id,MHM,St,Exp,Log}
\forcsvlist\oper{IE,IP,DT,Rep,MHS,Spec,Gr}
\def\lHom{\underline\Hom}
\def\rbr#1{\left(#1\right)}
\def\ang#1{\langle#1\rangle}
\def\set#1{\left\{#1\right\}}
\def\sets#1#2{\left\{\left.#1\ \right\vert#2\right\}}
\def\nn#1{\lVert#1\rVert}
\def\eq#1{\begin{equation}#1\end{equation}}
\def\eql#1#2{\begin{equation}\label{#2}#1\end{equation}} 
\def\over#1#2{{\substack{#1\\#2}}} 
\def\pser#1{[\![#1]\!]}
\def\bop{\bigoplus}
\def\oh{\frac12}
\def\ts{\otimes}
\def\vir{{\rm vir}}
\def\even{{\rm even}}
\def\inv{^{-1}}
\def\lb{\underline}
\def\mto{\mapsto}
\def\xto{\xrightarrow}
\def\iso{\simeq}
\dmat\one{\mathbf1}
\dmat\iff{\Longleftrightarrow}
\def\what{\widehat}
\def\sbs{\subset}
\def\xx{\times}
\def\pt{\mathbf{pt}}
\def\IC{\operatorname{{IC}}}
\def\ICV{\IC^{\mathrm{vir}}}
\def\mot{{\mathrm{mot}}}
\def\mhm{{\mathrm{mhm}}}
\def\fr{{\mathrm{f}}}
\def\fm#1{M^\fr_{#1}} 
\def\mm#1{M_{#1}} 
\def\fs#1{\fM^\fr_{#1}} 
\def\ms#1{\fM_{#1}} 
\def\kvar#1{\bfK^{\mathrm{mot}}(#1)}
\def\hkvar#1{\widehat\bfK^{\mathrm{mot}}(#1)}
\def\kmhm#1{\bfK^{\mathrm{mhm}}(#1)}
\def\hkmhm#1{\widehat\bfK^{\mathrm{mhm}}(#1)}
\def\var#1{\mathrm{Var}/#1}
\def\St#1{\mathrm{St}/#1}
\def\tpdf#1#2{\texorpdfstring{#1}{#2}} 
\def\hm{H} 
\def\cf{cf.~}   
\def\omn{\Om^{\mathrm{num}}} 
\begin{document}    
\title{Intersection cohomology of moduli spaces of sheaves on surfaces}

\author{Jan Manschot}
\author{Sergey Mozgovoy}

\address{School of Mathematics, Trinity College Dublin, Ireland\newline\indent
Hamilton Mathematics Institute, Ireland}

\email{manschot@maths.tcd.ie}
\email{mozgovoy@maths.tcd.ie}

\begin{abstract}
We study intersection cohomology of moduli spaces of semistable vector
bundles on a complex algebraic surface. Our main result relates
intersection Poincar\'e polynomials of the moduli spaces to
Donaldson-Thomas invariants of the surface. In support of this result,
we compute explicitly intersection Poincar\'e polynomials for sheaves
with rank two and three on ruled surfaces.
\end{abstract}

\maketitle

\tableofcontents  

\section{Introduction}
Let $S$ be a complex projective surface and $J$ be a polarization on $S$.
For any $\ga\in H^\even(S)$, let $\mm\ga$ (resp.\ $\ms\ga$) denote the moduli space (resp.\ stack) of Gieseker $J$-semistable sheaves having Chern character \ga.
The moduli space $\mm\ga$ is a projective, possibly singular, variety. In this paper we will prove a relation between intersection cohomologies of $\mm\ga$ and invariants of $\ms\ga$
under certain technical conditions on the polarization. 

To state our result more precisely, let $P(X)\in\bZ[y]$ be the
(motivic) Poincar\'e polynomial for an algebraic variety $X$ (see~\S\ref{sec:E-pol}). 
It is defined for a smooth projective $X$ by the formula
$$P(X)=\sum_{n\ge0}\dim H^n(X)(-y)^n,$$
and then is extended to arbitrary $X$ by additivity with respect to complements. We can represent the stack $\ms\ga$ as a global quotient $R_\ga/G_\ga$, where $G_\ga$ is a general linear group \cite[\S4.3]{huybrechts_geometry},
and define 
$$P(\ms\ga)=P(R_\ga)/P(G_\ga)\in\bQ(y).$$

Let the polarization $J$ and surface $S$ be such that the following two conditions are satisfied:
\begin{enumerate}
\item[(A)]  $J\cdot K_S<0$, implying that the category of semistable sheaves with a fixed reduced Hilbert polynomial (see \S\ref{sec:semist}) has a vanishing second \Ext.
\item[(B)] $J$ is generic (see Remark \ref{rem:generic}), implying that if $E,F$ have equal reduced Hilbert polynomials then $\hi(E,F)=\hi(F,E)$.
\end{enumerate}

Under these conditions, generating functions of $P(\ms\ga)$ have been determined
for sheaves with small rank and $S$ a rational or ruled surface \cite{Gottsche1990, Manschot2011,
  Manschot:2011ym, manschot_sheaves, mozgovoy_invariants, Yoshioka1994, Yoshioka1995}. Through the
Hitchin-Kobayashi correspondence \cite{MR1370660} and 
Donaldson-Uhlenbeck-Yau theorem \cite{donaldson_anti,
  CPA:CPA3160390714},  these generating functions are of interest for
the study of Yang-Mills theories. In particular, the partition function of topologically twisted $\mathcal{N}=4$ supersymmetric
Yang-Mills theory localizes on Hermitian-Yang-Mills connections \cite{Vafa:1994tf}, and it equals the generating
function of Euler numbers $\chi(\mm\ga)$ of $\mm\ga$ if $\ga$ is indivisible. In such cases, semi-stability implies stability and $P(\ms\ga)$ is related to $P(\mm\ga)$ by 
$$P(\ms\ga)=\frac{P(\mm\ga)}{y^2-1}.$$

In the following, we consider arbitrary $\ga$. 
To state our main result, recall that for any
algebraic variety $X$ the  intersection
Poincar\'e polynomial $\IP(X)$ is defined by 
$$\IP(X)=\sum_{n}\dim\IH^n(X)(-y)^n,$$
where $\IH^*(X)$ are intersection cohomologies of $X$. Our main result relates the virtual Poincar\'e
functions $P(\ms\ga)$ and $\IP(\mm\ga)$.

\begin{theorem}\label{thm1}
Let $J$ satisfy the conditions (A) and (B) above.
Then
$$
1+\sum_{p_J(\ga)=p}(-y)^{-\dim\ms\ga}P(\ms\ga)z^\ga
=\Exp\rbr{
\frac{\sum_{p_J(\ga)=p}(-y)^{-\dim M_\ga}\IP(M_\ga)z^\ga}{y\inv-y}}
$$
where $\dim\ms\ga=\dim\mm\ga-1=-\hi(\ga,\ga)$,
the sums run over all \ga with a fixed reduced Hilbert polynomial~$p$ and \Exp is a plethystic exponential \eqref{eq:Exp} defined for $f(y,z)=\sum_\ga f_\ga(y)z^\ga$ as
$$\Exp(f)=\exp\bigg(\sum_{n\ge1}\frac1nf(y^n,z^n)\bigg).$$
\end{theorem}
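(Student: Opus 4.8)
The plan is to compare two wall-crossing-type identities that both express the same motivic class of the stack $\ms\ga$, the key input being that conditions (A) and (B) force the moduli stack to be locally a quotient by a reductive group acting on a smooth space with a symmetric obstruction-free deformation theory, so that the relevant motivic/intersection-cohomological statements reduce to a statement about symmetric products and the decomposition theorem. First I would recall that, by condition (A), $\Ext^2(E,F)=0$ for $E,F$ semistable with reduced Hilbert polynomial $p$, so $R_\ga$ is smooth and $\ms\ga=R_\ga/G_\ga$ is a smooth stack of dimension $-\hi(\ga,\ga)$; condition (B) gives the numerical symmetry $\hi(E,F)=\hi(F,E)$ which is exactly what is needed for the ``$\lambda$-ring''/plethystic formalism on the Hall algebra of the abelian category $\cA_p$ of semistable sheaves with slope $p$ to be consistent (this is the setting of Joyce's and Mozgovoy's work on motivic invariants). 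The left-hand side $1+\sum (-y)^{-\dim\ms\ga}P(\ms\ga)z^\ga$ is then, up to the normalization by $(-y)^{-\dim}$, the generating series of the virtual motives of $\ms\ga$, i.e.\ the total object counting invariant in the motivic Hall algebra.

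Next I would bring in intersection cohomology via the map from the stack to its coarse moduli space. The moduli space $\mm\ga$ is projective and, because semistable need not be stable, singular; but the stack $\ms\ga$ maps to $\mm\ga$, and over the stable locus this is a $\C^\times$-gerbe (or a $\PP$GL-gerbe), which accounts for the factor $y^2-1=-(y\inv-y)\cdot(\text{normalization})$ appearing in the denominator on the right. The heart of the argument is to show that $\IP(\mm\ga)$ is the ``BPS''/indecomposable part of the invariants: precisely, one wants the identity that the generating series of $(-y)^{-\dim\ms\ga}P(\ms\ga)$ is the plethystic exponential of the generating series of $(-y)^{-\dim\mm\ga}\IP(\mm\ga)/(y\inv-y)$. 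I would obtain this by stratifying $\mm\ga$ by polystable type: a point of $\mm\ga$ corresponds to a polystable sheaf $\bigoplus_i E_i^{\oplus m_i}$ with $E_i$ stable, so the stratification is indexed by partitions-with-multiplicities of $\ga$ into stable classes, and the stack $\ms\ga$ is assembled from symmetric products $\Sym^{m_i}$ of the stable moduli spaces $M_{\ga_i}$. Applying the decomposition theorem to the (semismall-type) map from a suitable resolution, or more efficiently invoking the known relation between $\IH$ of a GIT quotient with only the ``principal'' orbit type and the equivariant cohomology, one identifies $\IP(\mm\ga)$ with the stable contribution $\IP(M_{\ga}^{\rm st})$ corrected by the lower strata, and the combinatorics of summing over all polystable types is exactly a plethystic exponential. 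This is the step I expect to be the main obstacle: controlling the intersection cohomology of $\mm\ga$ along the strictly-semistable locus and matching the symmetric-product combinatorics to $\Exp$ requires either a clean statement of the decomposition theorem for the Gieseker morphism or a reduction to the case of local models (quiver moduli with the same numerical data), and getting the normalizations by $(-y)^{-\dim}$ and the $1/(y\inv-y)$ factor to land correctly is delicate.

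Finally I would assemble the two sides. Having expressed $P(\ms\ga)$ (suitably normalized) as a sum over polystable types of products of (normalized) $P(M_{\ga_i}^{\rm st})$ contributions via the Hall-algebra integration map, and having identified each stable building block's contribution with $(-y)^{-\dim M_{\ga_i}}\IP(M_{\ga_i})/(y\inv-y)$ using the gerbe structure over the stable locus together with the $\IH$ computation of the previous step, the sum over all types with all multiplicities is precisely $\Exp$ of the single generating series $\sum_{p_J(\ga)=p}(-y)^{-\dim M_\ga}\IP(M_\ga)z^\ga/(y\inv-y)$. The dimension bookkeeping $\dim\ms\ga=\dim\mm\ga-1=-\hi(\ga,\ga)$ is forced by (A), and the symmetry (B) ensures the signs $(-1)^{\hi(\ga_i,\ga_j)}$ that would otherwise obstruct the clean plethystic form all cancel, so that one genuinely gets $\Exp$ rather than a twisted exponential. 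I would close by noting the consistency check: when $\ga$ is primitive, only the single-type stratum with all $m_i=1$ contributes, $\mm\ga=M_\ga^{\rm st}$ is smooth so $\IP=P$, and the identity collapses to $P(\ms\ga)=P(\mm\ga)/(y^2-1)$, recovering the formula quoted in the introduction.
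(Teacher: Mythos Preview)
Your proposal has the right high-level shape---define DT invariants by a plethystic log of the stack series, then identify them with intersection complexes---but the mechanism you propose for the identification is not the one the paper uses, and as written it has a gap.

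You suggest stratifying $\mm\ga$ by polystable type and then either applying the decomposition theorem to ``a suitable resolution'' or reducing to local quiver models. Neither of these is made concrete. Intersection cohomology is not additive over strata, so the stratification alone does not let you read off $\IP(\mm\ga)$ from the stable pieces; what is needed is a specific smooth source with a projective map to $\mm\ga$ to which the decomposition theorem can be applied. You acknowledge this is ``the main obstacle'' but do not supply the object.

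The paper's actual argument supplies exactly this missing ingredient: it constructs auxiliary moduli spaces $\fm\ga$ of \emph{framed} semistable sheaves (pairs $(E,s)$ with $s\in\Hom(T,E)$ for a suitable $T$), which are smooth and projective over $\mm\ga$ via $\pi:(E,s)\mapsto E$. A Harder-Narasimhan argument in the motivic Hall algebra gives a formula (Theorem~\ref{framed and DT}) expressing $\sum(-1)^{\vi(\ga)}[\fm\ga\to\mm\ga]_\vir$ as a plethystic exponential of $\sum(-1)^{\vi(\ga)}[\bP^{\vi(\ga)-1}]_\vir\DT^\mot_\ga$. Pushing this through $\hi_c$, using that $\fm\ga$ is smooth so $\hi_c([\fm\ga\to\mm\ga]_\vir)=\pi_!\ICV_{\fm\ga}$, and applying the decomposition theorem to $\pi$ together with induction on $\nn\ga$, one proves $\DT^\mhm_\ga=\ICV_{\mm\ga}$. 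There is also a genuine technical point you do not address: the framing functor $\Hom(T,-)$ is only left exact, so one must choose $T$ depending on a bound $N$ to make it ``$N$-exact'' on the relevant range of Chern characters, and run the induction accordingly. Your polystable-stratification picture, while morally consistent with the final answer, does not by itself produce the smooth proper map needed to invoke the decomposition theorem.
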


The conditions (A) and (B) are in particular satisfied for the projective plane $\mathbb{P}^2$. For rank $2$ sheaves on $\mathbb{P}^2$, $\IP(\mm\ga)$ were determined  by Yoshioka \cite[Remark
4.6]{Yoshioka1995} extending work of Kirwan on moduli spaces of
rank $2$ vector bundles on Riemann surfaces \cite{Kirwan01091986}. More recently, $\IP(\mm\ga)$ were determined for rank $3$ and $4$ sheaves on $\mathbb{P}^2$ \cite{Manschot:2011ym, manschot_sheaves}. In further support of Theorem \ref{thm1}, Section \ref{explicitresults} provides $\IP(\mm\ga)$ for rank $2$ and $3$ sheaves on ruled surfaces. For two ruled surfaces, we show that for a specific non-generic polarization, $J=-K_S$, Theorem \ref{thm1} continues to hold.  
Note that the conditions (A) and (B) exclude the case of K3 surfaces which have a $2$-Calabi-Yau category of coherent sheaves. Although our approach fails in this case, we expect that a similar relation between intersection cohomologies and stack invariants should be true.


Let us now reformulate the above result on the level of mixed Hodge structures and $E$-polynomials.
For any algebraic variety $X$, we can consider the cohomology with compact support $H^*_c(X,\bQ)$ as an element in $K_0(\MHS)$ and define the $E$-polynomial of $X$ by taking the Hodge-Euler polynomial of $H^*_c(X,\bQ)$
\eqref{E-pol}
$$E(X)=E(H^*_c(X,\bQ))=\sum_{p,q,n}(-1)^n h^{p,q}(H^n_c(X,\bQ))u^pv^q\in\bZ[u^{\pm1},v^{\pm1}].$$
Furthermore, define
$$E(\ms\ga)=E(R_\ga)/E(G_\ga),$$
$$\bL=E(\bA^1)=E(\bQ(-1)[-2])=uv.$$
The (motivic) Poincar\'e polynomial is equal to $P(X,y)=E(X,y,y)$.

On the other hand, for any quasi-projective variety $X$ of dimension $d$, let $\IC_X$ be its intersection complex \cite[\S1.13]{saito_introduction} (considered as a pure Hodge module of weight $d$) and let
$$\IH^*(X)=H^*_c(X,\IC_X)[-d]$$
be its intersection cohomology (considered as an object in $D^b\MHS$ or as an element in $K_0(\MHS)$).
Define the intersection $E$-polynomial of $X$ by taking the Hodge-Euler polynomial \eqref{E-pol}
$$\IE(X)=E(\IH^*(X))=\sum_{p,q,n}(-1)^n h^{p,q}(\IH^n(X))u^pv^q.$$
Note that if $X$ is projective, then $\IH^*(X)$ is pure of weight zero,
and the intersection Poincar\'e polynomial is equal to $\IP(X,y)=\IE(X,y,y)$.

\begin{theorem}\label{thm2}
Let $J$ satisfy the conditions (A) and (B) above.
Then
$$1+\sum_{p_J(\ga)=p}\bL^{-\oh\dim\ms\ga}E(\ms\ga)z^\ga
=\Exp\rbr{\frac{\sum_{p_J(\ga)=p}\bL^{-\oh\dim M_\ga}\IE(M_\ga)z^\ga}{\bL^\oh-\bL^{-\oh}}},
$$
where $\bL^\oh=-(uv)^\oh$.
\end{theorem}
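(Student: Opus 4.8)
The plan is to deduce Theorem~\ref{thm2} from Theorem~\ref{thm1} by upgrading the argument that proves Theorem~\ref{thm1} from the Grothendieck ring of varieties (tracked by $P$) to the Grothendieck ring $K_0(\MHS)$ of mixed Hodge structures (tracked by $E$). Concretely, I would first revisit the proof of Theorem~\ref{thm1} and identify every place where an identity in $\bZ[y]$ or $\bQ(y)$ is used: the decomposition theorem for the map $\ms\ga\to\mm\ga$ (or the relevant Kirwan-type partial resolution), the additivity of $P$ over stratifications, the computation of $P(G_\ga)$ and $P(R_\ga)$, and the combinatorial manipulation of the plethystic exponential. Each of these has a Hodge-theoretic refinement: the decomposition theorem holds in the derived category of mixed Hodge modules (Saito), so the splitting of $R\pi_*\IC$ is one of pure Hodge modules; additivity of $E$ over locally closed stratifications is standard; $G_\ga$ is an iterated extension of $\mathrm{GL}$'s and $\bG_m$'s so $E(G_\ga)=\prod(\bL^{a_i}-\bL^{b_i})$ with the $\bL$'s being Tate, hence invertible after localization; and the plethystic $\Exp$ is defined over $K_0(\MHS)$ exactly as in~\eqref{eq:Exp} with $y^n$ replaced by the $n$-th Adams operation, i.e.\ $(u,v)\mapsto(u^n,v^n)$. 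The key point making this work is that all the ``error terms'' — the classes of the groups $G_\ga$, the fibers of the stacky maps, and the summands appearing in the decomposition theorem — are built out of Tate motives, so no genuinely non-polynomial Hodge structures obstruct the division by $\bL^\oh-\bL^{-\oh}$ once we have chosen the square root $\bL^\oh=-(uv)^\oh$.

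The key steps, in order, are: \textbf{(i)} record that $\IH^*(\mm\ga)$ is pure (since $\mm\ga$ is projective) and that its Hodge-Euler polynomial $\IE(\mm\ga)$ specializes to $\IP(\mm\ga)$ under $u=v=y$; likewise $E(\ms\ga)$ specializes to $P(\ms\ga)$, using that $E$ and $P$ are compatible ring homomorphisms out of $K_0(\Var)$ and $K_0(\MHS)$. \textbf{(ii)} Re-run the geometric input of Theorem~\ref{thm1} — the wall-crossing / Harder–Narasimhan recursion expressing $E(\ms\ga)$ in terms of $E$ of stable loci, together with the decomposition theorem relating $E$ of the stable locus to $\IE(\mm\ga)$ and Tate correction factors — at the level of $K_0(\MHS)$. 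Condition (A) ($J\cdot K_S<0$, vanishing $\Ext^2$) guarantees smoothness of $R_\ga$ and the appropriate dimension formula $\dim\ms\ga=-\chi(\ga,\ga)$, while condition (B) (genericity, $\chi(E,F)=\chi(F,E)$) makes the relevant quadratic form symmetric so that the sign conventions in the plethystic exponential are consistent — both conditions are used verbatim as in Theorem~\ref{thm1}. \textbf{(iii)} Normalize by the shifts $\bL^{-\oh\dim\ms\ga}$ and $\bL^{-\oh\dim\mm\ga}$; here one checks that $\bL^{-\oh\dim}$ specializes to $(-y)^{-\dim}$ under $u=v=y$ because $\bL^\oh=-(uv)^\oh\mapsto -y$, which is exactly the sign bookkeeping already present in Theorem~\ref{thm1}. \textbf{(iv)} Assemble the identity and conclude that the two sides of Theorem~\ref{thm2}, being identities in the power series ring $K_0(\MHS)\pser{z^\ga}$ (completed along the $p_J(\ga)=p$ grading), hold because the same manipulations that established Theorem~\ref{thm1} go through with $\bZ[y]$ replaced by $K_0(\MHS)$ and $-y$ replaced by $\bL^\oh$.

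I expect the main obstacle to be \textbf{step (ii)}: verifying that the decomposition theorem of Saito actually applies in the stacky setting and that the multiplicities/local systems appearing are constant Tate twists rather than something carrying nontrivial monodromy or non-Tate Hodge structure. In the variety-level proof of Theorem~\ref{thm1} this subtlety is invisible because one only tracks Poincaré polynomials, but over $K_0(\MHS)$ one must genuinely know that $R\pi_*\IC_{\mm\ga}$ splits as $\IC_{\mm\ga}\oplus(\text{sum of IC sheaves on proper strata, Tate-twisted})$ with \emph{Tate} coefficients — this is where the geometry of the Kirwan resolution (or of the map from the stable locus) and the hypotheses (A)–(B) really enter, and it is the step that needs the most care. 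A secondary, more technical point is checking that the infinite product $\Exp$ converges in the $z^\ga$-adic topology on $K_0(\MHS)\pser{z^\ga}$ and that $\bL^\oh-\bL^{-\oh}$ is invertible in the localized, completed ring — but this is routine once the Tate nature of all classes is established.
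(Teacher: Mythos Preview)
Your high-level strategy is right and matches the paper: the identity is proved once at the level of mixed Hodge modules (actually in $\hkmhm{\mm{}}$, Theorem~\ref{th:main3}), and both Theorem~\ref{thm1} and Theorem~\ref{thm2} fall out by applying the maps $a_!$ and then $P$ or $E$. So there is nothing to ``upgrade'': the Poincar\'e-polynomial version is not proved first and then refined; both versions are shadows of the same $\MHM$-level statement.

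Where your plan diverges from the paper is in the geometric input of step~(ii). You propose to apply the decomposition theorem to the map $\ms\ga\to\mm\ga$ or to a Kirwan partial desingularization, and you correctly flag the stacky decomposition theorem and the Tate-ness of the local systems as the main worry. The paper avoids exactly this difficulty by introducing auxiliary \emph{framed} moduli spaces $\fm\ga$ (pairs $(E,s)$ with $s\in\Hom(T,E)$ for $T=L^{-n}$, $n\gg0$): these are honest smooth projective varieties equipped with projective maps $\pi:\fm\ga\to\mm\ga$, so Saito's decomposition theorem applies with no stacky subtleties. A Hall-algebra identity (Theorem~\ref{framed and DT}) expresses $\pi_!\ICV_{\fm\ga}$ as a plethystic exponential of $(-1)^{\vi(\al)}[\bP^{\vi(\al)-1}]_\vir\DT^\mhm_\al$, and an induction on $\nn\ga$ together with the argument of \cite{mozgovoy_intersection} then forces $\DT^\mhm_\ga=\ICV_{\mm\ga}$. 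In particular, the ``Tate coefficients'' issue you worried about is handled not by inspecting strata of a Kirwan resolution but by this framed-pair trick, which produces the projective-space factors $[\bP^{\vi(\al)-1}]_\vir$ explicitly. Your approach via Kirwan desingularization could plausibly be made to work, but it is a genuinely different route and would require the stratum-by-stratum Tate analysis you anticipate; the framed-moduli approach is what buys the paper its clean inductive argument.
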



Note that if $\ga$ is indivisible, then $\mm\ga$ consists of stable sheaves and is smooth.
In this case we obtain from the theorem
$$E(\ms\ga)=\frac{\IE(\mm\ga)}{\bL-1}$$
which is straightforward as $\IE(\mm\ga)=E(\mm\ga)$ and all stabilizers of objects in $\ms\ga$ are isomorphic to $\bC^*$.

The above results can be also formulated in terms of Donaldson-Thomas invariants $\Om_\ga=\Om_\ga(u,v)$ defined by the formula (see (\ref{I to Omb}, \ref{bOmtoOm}) for an explicit expression)
\eq{1+\sum_{p_J(\ga)=p}\bL^{-\dim\ms\ga}E(\ms\ga)z^\ga
=\Exp\rbr{\frac{\sum_{p_J(\ga)=p}\Om_\ga z^\ga}{\bL^\oh-\bL^{-\oh}}}
}
Then the above theorem can be simply written in the form
\eq{\Om_\ga=\bL^{-\oh\dim \mm\ga}\IE(\mm\ga).}

The idea of the proof of the above theorems goes back to \cite{meinhardt_donaldson} (see also \cite{mozgovoy_intersection}, \cite{meinhardt_donaldsona}).
We introduce a smooth moduli space $\fm\ga$ of framed vector bundles (see \S\ref{sec:framed}) which is equipped with a projective map $\pi:\fm\ga\to \mm\ga$.
Then we analyze the intersection complex of $\mm\ga$ by studying the pushforward with respect to $\pi$ of the intersection complex on $\fm\ga$.




One may wonder if a similar result can be proved for the moduli spaces of Mumford (also called~\mu-) semistable sheaves on a surface.
On the one hand, there are no technical difficulties.
If $J\cdot K_S<0$ then the category of Mumford semistable sheaves with a fixed slope has a vanishing second $\Ext$  (see Lemma \ref{lm:hereditary}).
And if a polarization is generic then sheaves having the same slope satisfy $\hi(E,F)=\hi(F,E)$ (see Remark \ref{rem:generic}).
On the other hand, one can show that although the moduli spaces (and stacks) of Gieseker and Mumford semistable sheaves are different in general, their Donaldson-Thomas invariants coincide (see Theorem \ref{Om G and S}).
Therefore we can not expect to get any new invariants from the Mumford semistable sheaves.
The reason for this phenomenon is that the moduli space
$\mm\ga^{\mu,s}$ of Mumford stable sheaves is not dense in the moduli space
$\mm\ga^{\mu}$ of Mumford semistable sheaves in general.
Indeed, $\mm\ga^{\mu,s}$ is contained in the moduli space $\mm\ga$ of Gieseker semistable sheaves which is projective and therefore closed in $\mm\ga^{\mu}$.


The paper is organized as follows. Section \ref{preliminaries}
reviews aspects of sheaves on surfaces, $\lambda$-rings and mixed
Hodge structures. Section \ref{result} proves the main results, Theorems \ref{thm1} and \ref{thm2}. Section
\ref{generating} discusses properties of Donaldson-Thomas invariants and their
generating functions. This is applied in Section \ref{explicitresults}
to determine $\IP(\mm\ga)$ explicitly for sheaves with ranks $2$ or $3$ on
a few ruled surfaces.


\section{Preliminaries}
\label{preliminaries}
Let $S$ be an algebraic surface with the canonical class $K_S$.
Given a coherent sheaf $F$ on $S$, let $c_1$ and $c_2$ be its first and second Chern classes respectively and let $\ga=(r,\ga_1,\ga_2)=\ch F$ be its Chern character (so that $\ga_1=c_1$ and $\ga_2=\oh c_1^2-c_2$).

\subsection{Hirzebruch-Riemann-Roch theorem}
By the Hirzebruch-Riemann-Roch theorem, we have 
\eq{\hi(F)=\hi(S,F)=\int_S \ga\cdot\td(S)
=\ga_2-\oh K_S\ga_1+\hi(O_S)r,}
where the Todd class $\td(S)$ is defined by
$$\td(S)=1-\oh K_S+\hi(O_S)=1-\oh K_S+\frac1{12}\rbr{K_S^2+e(S)}.$$
Applying this to $\lHom(F,F')=F^*\ts F'$ (assuming that $F$ is a vector bundle), we obtain
\eq{
\hi(\ga,\ga'):=\hi(F,F')
=(\ga'_2r+\ga_2r'-\ga_1\ga'_1)
+\oh (\ga_1r'-\ga'_1r)K_S
+\hi(O_S)rr'.}
This implies
\eq{\label{skew-sim}
\ang{\ga,\ga'}:=\hi(\ga,\ga')-\hi(\ga',\ga)
=(\ga_{1}r'-\ga'_{1}r)K_S,}
\eq{\hi(\ga,\ga)=2r\ga_2-\ga_1^2+\hi(O_S)r^2}
Note that if $(r,\ga_1)$ and $(r',\ga'_1)$ are proportional, then $\ang{\ga,\ga'}=0$.

\subsection{Semistability}
\label{sec:semist}
Let $J$ be a polarizing line bundle on $S$.
We denote its first Chern class also by $J$.
We let furthermore
\eq{\mu(F)=\frac{\ga_1}{r},\qquad
\mu_J(F)=\mu(F)\cdot J=\frac{\ga_1\cdot J}{r}}
and let the reduced Hilbert polynomial be $p_J(F,n)=\chi(F \otimes J^n)/r(F)$, or in terms of the Chern character

\eql{
p_J(\ga,n)
=\frac{J^2}2n^2
+\bigg(\frac{\ga_1\cdot J}r-\oh K_S\cdot J\bigg)n
+\bigg(\frac{\ga_2-\oh K_S\cdot\ga_1}r+\chi(\mathcal{O}_S)\bigg). 
}{reduced Hilb}

\begin{remark}\label{rem:generic}
In the following, we let $J$ be any element of the ample cone $C(S)\sbs H^2(S,\mathbb{R})$ rather then of $C(S)\cap H^2(S,\mathbb{Z})$. 
Note that if $\ga$ and $\ga'$ are proportional,
then $p_J(\ga)=p_J(\ga')$.
Conversely, assume that $J$ is generic in the sense that $J\cdot\ga_1=J\cdot\ga'_1$ implies $\ga_1=\ga'_1$ for elements in $H^2(S,\bZ)$.
Equivalently, $J\cdot\ga_1=0$ implies $\ga_1=0$ for $\ga_1\in H^2(S,\bZ)$.
If $\mu_J(\ga)=\mu_J(\ga')$, then $(r,\ga_1)$ and $(r',\ga'_1)$ are proportional.
We conclude from \eqref{skew-sim} that $\ang{\ga,\ga'}=0$ in this case.
If $p_J(\ga)=p_J(\ga')$, then $\ga_1/r=\ga'_1/r'$ and this implies that $\ga_2/r=\ga'_2/r'$. We conclude that $\ga$ and $\ga'$ are proportional.
Note that for any $F\ne0$, we have $\hi(F\ts J^n)>0$ for $n\gg0$.
Therefore, for $\ch(F)=(r,\ga_1,\ga_2)$, we have either $r>0$ or $r=0$ and $\ga_1\cdot J>0$ or $r=\ga_1=0$ and $\ga_2>0$.
\end{remark}

We recall that a sheaf $F$ is called Mumford (or $\mu$-) semi-stable with respect to the
polarization $J$ if for each subsheaf
$F'\subseteq F$,
\be
\label{mustab}
\mu_J(F')\leq \mu_J(F).
\ee
Similarly, a sheaf $F$ is called Gieseker semi-stable with respect to the
polarization $J$ if for each subsheaf
$F'\subseteq F$,
\be
\label{Gstab}
p_J(F',n) \preceq p_J(F,n),
\ee
where $\preceq$ indicates the lexicographic ordering with respect to
the monomials in $n$.

We recall that a Harder-Narasimhan filtration with
respect to a stability condition $\varphi$ is a filtration $0\subset
F_1\subset F_2\subset \dots \subset F_\ell=F$ of a sheaf $F$, such
that the quotients $E_i=F_i/F_{i-1}$ are semi-stable with respect to
$\varphi$ and satisfy $\varphi(E_i)>\varphi(E_{i+1})$ for all $i$. 

Note that for $J=\pm K_S$, the sheaves with equal slopes $\mu_J(\ga)=\mu_J(\ga')$ (or reduced Hilbert polynomials),
have vanishing $\ang{\ga,\ga'}$.

\subsection{Discriminant}
We define the discriminant (cf.\ \cite[\S3.4]{huybrechts_geometry})
\eq{\label{De}
\De(\ga)=\De(F)
=\frac{1}{r}\left( c_2-\frac{r-1}{2r} c_1^2\right)
=\frac{\ga_1^2}{2r^2}-\frac{\ga_2}r.}
Then
\eq{\frac{\hi(\ga,\ga)}{r^2} 
=\frac{2\ga_2}r-\frac{\ga_1^2}{r^2}+\hi(O_S)
=-2\De(\ga)+\hi(O_S).
}
Note that
\eq{\log(\ga/r)
=\frac{\ga_1+\ga_2}{r}-\oh\rbr{\frac{\ga_1+\ga_2}{r}}^2
=\frac{\ga_1}r+\rbr{\frac{\ga_2}r-\frac{\ga_1^2}{2r^2}}
=\mu(\ga)-\De(\ga).
}
Therefore
\eq{\De(\ga\cdot\ga')=\De(\ga)+\De(\ga').}
Note that for a line bundle $L$, we have $r=1$ and $\ga_2=\ga_1^2/2$.
Therefore 
\eq{\De(L)=0,\qquad
\De(F\ts L)=\De(F).}

By the Bogomolov inequality \cite[\S3.4]{huybrechts_geometry}, if $F$ is (Gieseker or Mumford) semistable, then 
\eq{\De(F)\ge0.}

Consider a filtration $0\subset F_1\subset \dots \subset F_\ell=F$ of $F$ whose quotients $E_i=F_i/F_{i-1}$ have Chern character $\gamma(E_i)=\gamma^{(i)}$. Then the discriminant 
$\Delta(\gamma)$ is expressed in terms of $\gamma^{(i)}$ as
\be
\label{Delta_filtration}
r\Delta=\sum_{i=1}^\ell r^{(i)}\Delta^{(i)}-\sum_{i=2}^\ell\frac{1}{2r^{(i)}}\frac{1}{\sum_{j=1}^i r^{(j)} \sum_{k=1}^{i-1}r^{(k)}}\left( \sum_{j=1}^{i-1}r^{(i)}\gamma_{1}^{(j)}-r^{(j)}\ga_{1}^{(i)}\right)^2.
\ee

\subsection{\tpdf{\la}{Lambda}-rings}
For the definition and basic properties of \la-rings see e.g.~\cite{getzler_mixed,mozgovoy_computational}.
Assuming that $R$ is a commutative algebra over \bQ, a \la-ring structure on $R$ is given by a family of ring homomorphisms $(\psi_n:R\to R)_{n\ge1}$, called Adams operations, satisfying
\begin{enumerate}
\item $\psi_1=\Id_R$,
\item $\psi_{m}\psi_n=\psi_{mn}$.
\end{enumerate}
Using the ring of symmetric functions
$$\La=\varprojlim \bZ[x_1,\dots,x_n]^{S_n},$$
we can define a (unique) map $\circ:\La\xx R\to R$, called a plethystic operation, such that
\begin{enumerate}
\item $-\circ r:\La\to R$ is a ring homomorphism for all $r\in R$,
\item $p_n\circ r=\psi_n(r)$ for all $r\in R$ and $n\ge1$, where $p_n=\sum_i x_i^n\in\La$, called power sums.
\end{enumerate}
Assuming that the first of the above axioms is satisfied, the plethystic operation in its turn is uniquely determined by any of the following families of maps
\begin{enumerate}
\item $\la_n(r)=e_n\circ r$, where $e_n=\sum_{i_1<\dots<i_n}x_{i_1}\dots x_{i_n}\in\La$, elementary symmetric functions.
\item $\si_n(r)=h_n\circ r$, where $h_n=\sum_{i_1\le\dots\le i_n}x_{i_1}\dots x_{i_n}\in\La$, complete symmetric functions.
\end{enumerate}
We will see later several examples of \la-rings.
The key example arises from the Grothendieck ring $K_0(\cA)$ of an abelian (or exact) symmetric monoidal category \cA.
The \la- and \si-operations are defined in this case by taking exterior and symmetric powers, respectively
\eq{\la_n(V)=[\La^nV],\qquad \si_n(V)=[S^nV].}
For example, if $\cA=\Vect^\bN$, the category of finite-dimensional \bN-graded vector spaces over a field~\bk, then
$$K_0(\cA)\iso\bZ[x],\qquad [\bk_i]\mto x^i,\ i\ge0,$$
where $\bk_i$ is a one-dimensional vector space concentrated in degree $i$.
One can show that 
$$\si_n(x^i)=x^{ni},\qquad \psi_n(x^i)=x^{ni}$$
and generally
$$\psi_n(f(x))=f(x^n).$$
Generalizing this example further, we can equip the rings
$$\bQ[x_1,\dots,x_k],\qquad
\bQ(x_1,\dots,x_k),\qquad \bQ\pser{x_1,\dots,x_k}$$
with a \la-ring structure
\eq{\psi_n(f(x_1,\dots,x_k))=f(x_1^n,\dots,x_k^n).}
More generally, given a \la-ring $R$ and a commutative monoid \Ga, we can equip the semigroup algebra
\eq{R[\Ga]=\bop_{\ga\in\Ga}R=\sets{f=\sum_{\text{finite}} f_\ga z^\ga}{f_\ga\in R\ \forall\ga\in\Ga},}
with a (\Ga-graded) \la-ring structure
\eq{\psi_n\rbr{\sum f_\ga z^\ga}=\sum\psi_n(f_\ga)z^{n\ga}.}
Assuming that \Ga is locally finite, that is,
\eql{\#\sets{(a,b)\in\Ga\xx\Ga}{a+b=c}<\infty
\qquad\forall c\in\Ga,
}{loc fin}
we can also equip the completion
\eq{R\pser\Ga=\prod_{\ga\in\Ga}R}
with a \la-ring structure.
We define the plethystic exponential on $R\pser\Ga$
\eql{\Exp(f)=\sum_{n\ge0}\si_n(f)=\exp\rbr{\sum_{n\ge1}\frac1n\psi_n(f)},\qquad f=\sum f_\ga z^\ga,\quad f_0=0.}{eq:Exp}
Its inverse, the plethystic logarithm, is given by
\eql{\Log(f)=\sum_{n\ge1}\frac{\mu(n)}n\psi_n(\log(f)),\qquad f=\sum f_\ga z^\ga,\quad f_0=1,}{eq:Log}
where $\mu$ is the classical M\"obius function.

\subsection{Some Grothendieck groups}
Given a scheme (or an algebraic stack) $S$ locally of finite type over \bC, let $K_0(\var S)$ denote the free abelian group generated by isomorphism classes of objects in $\var S$ (the category of finite type schemes over $S$), modulo relations
\eq{[X\to S]=[Z\to S]+[U\to S],}
where $Z\sbs X$ is a closed subvariety and $U=X\backslash Z$ is its complement.
Sometimes we denote $K_0(\var\pt)$ (where $\pt=\Spec\bC$) by $K_0(\var\bC)$.
It has a structure of a ring and the group $K_0(\var S)$ has a module structure over it.
The element $\bL=[\bA^1]\in K_0(\var\bC)$ is called the Lefschetz motive.
One can show that by localizing $K_0(\var S)$ with respect to $\bL$ and $\bL^n-1$ for $n\ge1$, one obtains the Grothendieck group $K_0(\St S)$ of (finite type) stacks with affine stabilizers over $S$ \cite[\S3.3]{bridgeland_introduction}.
Let $\kvar\pt$ be obtained from $K_0(\var\pt)\ts\bQ$ by localizing with respect to the above elements and by adjoining the element $\bL^\oh$.
Generally, define
\eq{\kvar S
=K_0(\var S)\ts_{K_0(\var\pt)}\kvar\pt.}
If $X\to S$ is a finite type stack with affine stabilizers over $S$, we denote by $[X\to S]$ the corresponding element in $\kvar S$.
The rings $K_0(\var\pt)$ and $\kvar\pt$ are equipped with (pre-)\la-ring structures
\eq{\si^n(X)=[S^nX].}
The Adams operations act on \bL as $\psi_n(\bL)=\bL^n$.
The action on $\bL^\oh$ is defined to be $\psi_n(-\bL^\oh)=(-\bL^\oh)^n$.

For any quasi-projective variety $S$, the Grothendieck group $K_0(\MHM(S))$ of mixed Hodge modules over $S$ is a module over the ring $K_0(\MHS)=K_0(\MHM(\pt))$.
Similarly to the construction of $\kvar S$, we consider \eq{\bL=H^*_c(\bA^1,\bQ)=\bQ(-1)[-2]}
as an element of $K_0(D^b\MHS)=K_0(\MHS)$ and 
define the ring $\kmhm\pt$ by localizing $K_0(\MHS)\ts\bQ$ with respect to $\bL$ and $\bL^n-1$, $n\ge1$ and by adjoining the element $\bL^\oh$.
Then we define
\eq{\kmhm S=K_0(\MHM (S))\ts_{K_0(\MHM(\pt))}\kmhm\pt.}

There is a well-defined group homomorphism
\eq{\hi_c:K_0(\var S)\to K_0(\MHM(S)),\qquad [f:X\to S]\mto [f_!\bQ_X]}
which extends to
\eq{\hi_c:\kvar S\to \kmhm S.}

\begin{remark}
To see that the map is indeed well-defined, consider $f:X\to S$, a closed embedding $i:Z\to X$ and its open complement $j:U\to X$.
Then there is a distinguished triangle
$$j_!j^!F\to F\to i_*i^*F\to$$
for any $F\in D^b(\MHM(X))$.
Considering $F=\bQ_X$ and applying $f_!$ for $f:X\to S$, we obtain a distinguished triangle
$$(fj)_!\bQ_U\to f_!\bQ_X\to (fi)_!\bQ_Z\to$$
which implies
$$\hi_c(f)=\hi_c(fi)+\hi_c(fj).$$
\end{remark}

In particular, we have a map
\eq{\hi_c:K_0(\var\bC)\to K_0(\MHS),
\qquad [X]\mto H^*_c(X,\bQ)
}
which was proved to be a homomorphism of (pre-)\la-rings
in \cite[\S2.2]{maxim_twisted}.

\subsection{\tpdf{$E$}{E}-polynomial and Poincar\'e polynomial}
\label{sec:E-pol}
Given a mixed Hodge structure $V$, we define its Hodge-Euler polynomial \cite[\S3.1]{peters_mixed}
\eql{E(V,u,v)=\sum_{p,q}h^{p,q}(V)u^pv^q,
\qquad h^{p,q}(V)=\dim\Gr^p_F\Gr_{p+q}^W(V_\bC).
}{E-pol}
We can extend $E$ to a \la-ring homomorphism
$$E:K_0(\MHM(\pt))=K_0(\MHS)\to\bQ[u^{\pm1},v^{\pm1}],$$
where the \la-ring structure on the right is given by $\psi_n(f(u,v))=f(u^n,v^n)$.
We can also extend $E$ to a \la-ring homomorphism
$$E:\kmhm\pt\to\bQ(u^{\oh},v^{\oh})$$
with $E(\bL^\oh)=-(uv)^\oh$.
We will also denote by $E$ the composition
\eq{\kvar\pt\xto{\hi_c}\kmhm\pt\xto E\bQ(u^{\oh},v^{\oh})}
called the $E$-polynomial (or Hodge-Deligne polynomial), although it is a rational function in general.
For an algebraic variety $X$, we have
\eq{E(X)=\sum_n(-1)^n\sum_{p,q}h^{p,q}(H^n_c(X,\bQ))u^pv^q.}
We define the (motivic) Poincar\'e polynomial $$P:\kvar\pt\to\bQ(y),\qquad
P(X)=P(X,y)=E(X,y,y).$$
If $X$ is a smooth projective variety then
\eql{P(X)=\sum_{n\ge0}\dim H^n(X,\bC)(-y)^n.}{eq:poincare proj}
Moreover, $P(\bL^\oh)=-y$.

\subsection{Virtual intersection complexes and motives}
Given an algebraic variety $X$ of dimension~$d$, define the virtual intersection complex
\eq{\ICV_X=\IC_X(d/2)=\bL^{-\oh d}\IC_X[-d].}
This is a weight zero Hodge module.
Given a map $f:X\to S$, where $X$ is an algebraic variety (or a finite type stack with affine stabilizers over $S$) of dimension $d$, define
\eq{[X\to S]_\vir=\bL^{-\oh d}[X\to S].}
If $X$ is smooth then
\eql{\hi_c([X\to S]_\vir)
=\bL^{-\oh d}f_!(\bQ_X)=f_!(\ICV_X)}{vir mot to mhm}
as
$\IC_X=\bQ_X[d]$ and $\ICV_X=\bL^{-\oh d}\bQ_X$.
In particular, if $X$ is an algebraic variety (or stack) of dimension $d$, then
\eq{[X]_\vir=\bL^{-\oh d}[X]\in\kvar\pt}
and if $X$ is smooth, then
\eq{\hi_c([X]_\vir)=H^*_c(X,\ICV_X).}

\subsection{Graded commutative monoids}
We will construct generalizations of the rings $\kvar\pt$ and $\kmhm\pt$.
Let $\Ga\sbs\bZ^n$ be a monoid
and let $M=\bigsqcup_{\ga\in\Ga}M_\ga$ be a \Ga-graded commutative monoid in the category of complex algebraic varieties.
This means that $M_\ga$ are complex algebraic varieties (we will assume that they are quasi-projective) equipped with an associative commutative multiplication
$$\mu:M_\ga\xx M_{\ga'}\to M_{\ga+\ga'}$$
and with a unit $\pt\to M_0$ satisfying the standard axioms.
We will assume that the map $\mu$ is finite.
Define a \Ga-graded group
\eq{\kvar M=\bop_{\ga\in\Ga}\kvar{M_\ga}}
and equip it with a commutative ring structure
\eq{[X\to M_\ga]\cdot[Y\to M_{\ga'}]
=[X\xx Y\to M_\ga\xx M_{\ga'}\xto\mu M_{\ga+\ga'}].
}
It has a (pre-)\la-ring structure defined by
\eq{\si_n(X\to M_\ga)=[S^nX\to S^nM_\ga\xto\mu M_{n\ga}].}

On the other hand, consider a \Ga-graded category
$$\cA=\bigsqcup_{\ga\in\Ga}\cA_\ga,\qquad \cA_\ga=\MHM(M_\ga)$$
and equip it with the tensor product
\eq{\odot:\cA_\ga\xx \cA_{\ga'}\to \cA_{\ga+\ga'},\qquad E\odot F=\mu_*(E\boxtimes F),}
where $E\boxtimes F=p_1E\ts p_2F$ with $p_1:M_\ga\xx M_{\ga'}\to M_{\ga}$, $p_2:M_\ga\xx M_{\ga'}\to M_{\ga'}$ being projections.
It is proved in \cite{mozgovoy_intersection} that $\cA$ equipped with this tensor product is a symmetric monoidal category.

The Grothendieck groups
\eq{K_0(\cA)=\bop_{\ga\in\Ga}K_0(\cA_\ga),\qquad 
\kmhm M=\bop_{\ga\in\Ga}\kmhm{M_\ga}}
are commutative \Ga-graded rings with multiplication
\eq{[E]\cdot[F]=[E\odot F].}
By \cite{mozgovoy_intersection,biglari_rings,maxim_twisted,getzler_mixed,heinloth_note} they are also \la-rings with \si-operations defined by
\eql{\si_n(E)=S^nE=\im\rbr{\frac1{n!}\sum_{\ta\in S_n}\ta}\sbs E^{\ts n}.}{eq:sigma}

One can prove that the map
\eql{\hi_c:\kvar M\to \kmhm M,\qquad [f:X\to M_\ga]\mto f_!\bQ_X}{eq:hic on M}
is a homomorphism of (pre-)\la-rings using results of \cite{maxim_symmetric}.
If \Ga is locally finite \eqref{loc fin}, then we can equip
\eql{\hkvar{\mm{}}=\prod_\ga \kvar{\mm\ga},\qquad
\hkmhm{\mm{}}=\prod_\ga \kmhm{\mm\ga}}{complete Kvar}
with the structures of (pre-)\la-rings and extend \eqref{eq:hic on M} to a homomorphism of (pre-)\la-rings
\eq{\hi_c:\hkvar M\to \hkmhm M.}

\section{The main result}
\label{result}
Let $S$ be a projective surface over \bC and $J\in H^2(S,\bR)$ be a polarization on $S$.
We will assume that $J$ is generic and $J\cdot K_S<0$.
The latter requirement is needed because of the following result

\begin{lemma}
\label{lm:hereditary}
Assume that $J\cdot K_S<0$.
Then for any Gieseker (or Mumford) semistable sheaves $E,F\in\Coh S$ with $p_J(E)=p_J(F)$ (or $\mu_J(E)=\mu_J(F)$), we have
$$\Ext^2(E,F)=0.$$ 
\end{lemma}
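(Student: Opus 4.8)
The plan is to use Serre duality on the surface $S$ to convert the vanishing of $\Ext^2(E,F)$ into the vanishing of $\Hom(F, E\ts K_S)$, and then to derive a contradiction from semistability by comparing slopes (or reduced Hilbert polynomials) of $F$ and $E\ts K_S$. First I would recall that, since $E$ and $F$ are coherent sheaves on the smooth projective surface $S$, Serre duality gives
$$\Ext^2(E,F)\iso \Hom(F,E\ts K_S)^\vee.$$
So it suffices to show $\Hom(F,E\ts K_S)=0$. Suppose for contradiction that there is a nonzero morphism $\phi:F\to E\ts K_S$.

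Next I would exploit semistability. The twist $E\ts K_S$ has $\mu_J(E\ts K_S)=\mu_J(E)+J\cdot K_S$ (and correspondingly the reduced Hilbert polynomial shifts: $p_J(E\ts K_S,n)=p_J(E,n+1)$ in the line-bundle sense, but more simply the constant term decreases by $(J\cdot K_S)/1$ after accounting for ranks — the key point is that $J\cdot K_S<0$ strictly lowers the relevant slope). Concretely, $\mu_J(E\ts K_S)=\mu_J(E)<\mu_J(F)$ is wrong in general; rather $\mu_J(E\ts K_S)=\mu_J(E)+J\cdot K_S < \mu_J(E)=\mu_J(F)$, using $p_J(E)=p_J(F)$ (hence $\mu_J(E)=\mu_J(F)$) and $J\cdot K_S<0$. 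Now the image $\mathrm{im}\,\phi\subseteq E\ts K_S$ is a nonzero subsheaf, so by semistability of $E\ts K_S$ (a twist of the semistable $E$ is semistable) we get $\mu_J(\mathrm{im}\,\phi)\le\mu_J(E\ts K_S)<\mu_J(F)$. On the other hand $\mathrm{im}\,\phi$ is a quotient of $F$, so semistability of $F$ gives $\mu_J(\mathrm{im}\,\phi)\ge\mu_J(F)$ (for Gieseker stability one uses the analogous inequality on reduced Hilbert polynomials, comparing the full polynomials lexicographically; the strict drop in the linear-in-$n$ coefficient caused by $J\cdot K_S<0$ still forces a contradiction at the first nonconstant term, or already at the leading comparison after normalizing). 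These two inequalities are incompatible, so $\phi=0$, giving $\Hom(F,E\ts K_S)=0$ and hence $\Ext^2(E,F)=0$.

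I would handle the Gieseker case and the Mumford case in parallel: in the Mumford case the slope comparison above is literally what is written; in the Gieseker case one replaces $\mu_J$ by $p_J$ throughout and notes that $p_J(E\ts K_S,n)$ differs from $p_J(F,n)=p_J(E,n)$ by a shift that, because $J\cdot K_S<0$, makes $p_J(E\ts K_S)\prec p_J(F)$ in the lexicographic order (the coefficient of $n$ strictly decreases while the leading $n^2$-coefficient $J^2/2$ is unchanged). Then the standard fact that a nonzero map between semistable sheaves with $p_J(\text{source})\succ p_J(\text{target})$ must vanish — proved via the image sheaf as above — closes the argument.

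The main obstacle, and the only place requiring care, is verifying that twisting by $K_S$ strictly decreases the appropriate stability invariant: this is where the hypothesis $J\cdot K_S<0$ enters, and one must be careful that it is the \emph{reduced} Hilbert polynomial (divided by the rank) whose linear coefficient drops, so that the lexicographic comparison genuinely produces $p_J(E\ts K_S)\prec p_J(F)$ rather than equality. Once that monotonicity is in hand, the image-sheaf slope-squeeze argument is routine and the lemma follows.
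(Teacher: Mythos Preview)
Your proposal is correct and follows essentially the same approach as the paper: Serre duality reduces to showing $\Hom(F,E\ts K_S)=0$, and then the strict inequality $\mu_J(E\ts K_S)=\mu_J(E)+J\cdot K_S<\mu_J(F)$ combined with semistability forces any such map to vanish. The only simplification you miss is that the paper handles both cases at once by observing that Gieseker semistability implies Mumford semistability and $p_J(E)=p_J(F)$ implies $\mu_J(E)=\mu_J(F)$, so there is no need to treat the Gieseker case separately via the reduced Hilbert polynomial.
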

\begin{proof}
Gieseker semistability implies Mumford semistability.
Therefore we can assume that $E,F$ are Mumford semistable
and $\mu_J(E)=\mu_J(F)$.
Then the sheaf $E\ts K_S$ is also Mumford semistable and $\mu_J(E\ts K_S)<\mu_J(E)=\mu_J(F)$.
This implies $\Hom(F,E\ts K_S)=0$.
By the Serre duality
$$\Ext^2(E,F)\iso\Hom(F,E\ts K_S)^*=0.$$
\end{proof}

Given a polynomial $p$, let $\Ga^*\sbs \oh H^\even(S,\bZ)$ be a semigroup consisting of classes $\ga=(r,\ga_1,\ga_2)$ with $p_J(\ga)=p$ and $r>0$ and let $\Ga=\Ga^*\cup\set0$.
If $J$ is generic then \Ga is isomorphic to \bN.
For any $\ga\in \Ga$, let $\mm \ga$ (resp.\ $\ms\ga$) denote the moduli space (resp.\ stack) of Gieseker semi-stable sheaves on $S$ having Chern character \ga.
We let $\mm 0=\pt$.
The schemes $\mm\ga$ are projective, possibly singular \cite[Theorem 4.3.4]{huybrechts_geometry}.
The goal of this section is to prove the following (cf.\ Theorem \ref{thm2})

\begin{theorem}
If $J$ is generic and $J\cdot K_S<0$ then
$$\sum_{\ga\in\Ga^*}\bL^{-\oh\dim\mm\ga}\IE(\mm\ga)z^\ga
=(\bL^\oh-\bL^{-\oh})\Log\rbr{\sum_{\ga\in\Ga}\bL^{-\oh\dim\ms\ga}E(\ms\ga)z^\ga}$$
in $\bQ(u^{\oh},v^{\oh})\pser{\Ga}$ with $\bL=E(H^*_c(\bA^1,\bQ))=uv$.
\end{theorem}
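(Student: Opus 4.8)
The plan is to follow the framing strategy of Meinhardt--Reineke: introduce an auxiliary smooth moduli space and transfer information between its (trivial, since smooth) intersection complex and the intersection complex of $\mm\ga$ via a projective morphism, then package everything into a plethystic identity.

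\textbf{Step 1: Framed moduli and the framing morphism.}
For each $\ga\in\Ga^*$ and a suitable auxiliary class (a large twist $J^m$ making all semistable sheaves with character $\ga$ globally generated and higher-cohomology-free), introduce the moduli space $\fm\ga$ of framed sheaves, i.e.\ pairs $(E,\phi)$ with $E$ semistable of character $\ga$ and $\phi$ a framing (a surjection from a fixed bundle, or a choice of basis of $H^0(E\ts J^m)$ up to nothing). The point of condition (A), via Lemma \ref{lm:hereditary} and its consequences for the tangent-obstruction theory, is that $\fm\ga$ is \emph{smooth}; the point of condition (B) ($\hi(E,F)=\hi(F,E)$ for equal reduced Hilbert polynomials) is that the dimension of $\fm\ga$, the fibres of $\pi$, and the numerical bookkeeping all behave symmetrically, so the shifts by $\bL^{-\oh\dim}$ match up. There is a natural projective morphism $\pi:\fm\ga\to\mm\ga$ forgetting the framing, whose fibre over a point $[E]$ of $\mm\ga$ is (a quotient of) the space of framings of $E$, which depends only on $\dim H^0(E\ts J^m)=\hi(\ga\ts J^m)$ by (A), hence is a fixed variety, say with $E$-polynomial $N(\ga)$. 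I will record $N(\ga)$ explicitly and note it is a ``motivic $q$-integer''-type factor.

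\textbf{Step 2: Decomposition theorem and the stacky generating series.}
Apply the decomposition theorem (in the mixed Hodge module form of Saito) to $\pi_*\ICV_{\fm\ga}=\pi_*\bL^{-\oh\dim\fm\ga}\bQ_{\fm\ga}$. Its summand supported on all of $\mm\ga$ with no shift is exactly $\ICV_{\mm\ga}$ (up to the correct Tate twist), with multiplicity the ``generic framing'' factor; the remaining summands are supported on the proper closed locus of strictly semistable sheaves. The key structural input, following \cite{meinhardt_donaldson} and \cite{mozgovoy_intersection}, is that this defect locus is controlled by the Harder--Narasimhan/Jordan--H\"older stratification, so that in the graded ring $\hkmhm{\mm{}}$ of \S2 the class $[\fm\ga\to\mm\ga]_\vir$ is a universal (framing-factor)-weighted combination of products $\IE(\mm{\ga^{(1)}})\cdots\IE(\mm{\ga^{(k)}})$ over ordered decompositions $\ga=\sum\ga^{(i)}$ with $p_J(\ga^{(i)})=p$. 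Summing over $\ga$, this is precisely the statement that the generating series $\sum_\ga [\fm\ga\to\mm\ga]_\vir z^\ga$ equals the framing factor times $\Exp$ (or a symmetric-function power) of $\sum_\ga \IE(\mm\ga)z^\ga$.

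\textbf{Step 3: Identify the same series with the stack side.}
Independently, stratify $\fm\ga$ by the HN type of the underlying sheaf; on the open stratum the framing is ``stable'' and $\fm\ga^{s}$ is a $G$-torsor-like object over $\ms\ga$, giving $[\fm\ga^{s}]_\vir = (\text{framing factor})\cdot[\ms\ga]_\vir$ after the usual motivic bookkeeping (the framing factor being a global-section space, the stacky $E$-polynomial carrying the $1/(\bL-1)$ from $B\bC^*$); the lower strata again contribute products over decompositions, so $\sum_\ga[\fm\ga]_\vir z^\ga = (\text{framing factor})\cdot\Exp$ or a symmetric operation applied to $\sum_\ga [\ms\ga]_\vir z^\ga=\sum_\ga \bL^{-\oh\dim\ms\ga}E(\ms\ga)z^\ga$. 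Comparing Steps 2 and 3 and cancelling the common framing factor (which one checks is the same in both, again by (A), (B)) yields $\sum_\ga \bL^{-\oh\dim\ms\ga}E(\ms\ga)z^\ga = \Exp$-type$\bigl(\text{something}\cdot\sum_\ga\bL^{-\oh\dim\mm\ga}\IE(\mm\ga)z^\ga\bigr)$; a short computation of the linear (single-$\ga$, stable) term fixes the proportionality constant to be $1/(\bL^\oh-\bL^{-\oh})$, i.e.\ the constant $B\bC^*$-contribution together with the shift $\dim\ms\ga=\dim\mm\ga-1$. Applying $\Log$ gives the displayed formula.

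\textbf{Main obstacle.}
The delicate point is Step 2: controlling \emph{all} the summands of the decomposition theorem on the defect locus and showing they reassemble, after summing in $z$, into a clean plethystic expression with no extra ``junk''. This is where one genuinely uses that the category of semistable sheaves with fixed reduced Hilbert polynomial is hereditary (vanishing $\Ext^2$, Lemma \ref{lm:hereditary}) so that the relevant stacks are ``cohomologically pure'' / the strata are as simple as possible, and uses the $\hi(E,F)=\hi(F,E)$ symmetry so that the Ext-grading producing the strata is symmetric in the factors --- exactly the input making the combinatorics of framings independent of the order of a decomposition, hence genuinely plethystic rather than merely multiplicative. Making the induction on $\ga$ (equivalently on the rank, since $\Ga\iso\bN$) rigorous, with the base case $\mm\ga$ smooth (indivisible $\ga$, or minimal rank) where the statement reduces to the already-remarked $E(\ms\ga)=\IE(\mm\ga)/(\bL-1)$, is then routine.
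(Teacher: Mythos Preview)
Your proposal follows the same strategy as the paper: introduce smooth framed moduli spaces mapping projectively to $\mm\ga$, use the decomposition theorem on $\pi_!\ICV_{\fm\ga}$, and separately relate framed invariants to stack invariants, then compare. The paper organizes this slightly differently---it first \emph{defines} motivic DT invariants by the stack side (your Step~3), proves a framed-vs-DT identity $\sum(-1)^{\vi(\ga)}[\fm\ga\to\mm\ga]_\vir=\Exp\bigl(\sum(-1)^{\vi(\ga)}[\bP^{\vi(\ga)-1}]_\vir\DT^\mot_\ga\bigr)$ via a Hall-algebra computation, and only then shows $\DT^\mhm_\ga=\ICV_{\mm\ga}$ by induction using the decomposition theorem---but the content is the same.

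Two points to correct. First, in Step~3 you propose to stratify $\fm\ga$ by ``the HN type of the underlying sheaf'', but every underlying sheaf is already Gieseker semistable with fixed reduced Hilbert polynomial, so that stratification is trivial. The actual mechanism is the HN filtration \emph{of the pair} in the framed category $\cB_\fr$: any $(E,s)$ with $E\in\cB$ admits a unique stable subpair $(E',s)\subset(E,s)$ with $E',E/E'\in\cB$, yielding the Hall-algebra identity $\one^\fr_\cB=\one^{\fr,s}_\cB\circ\one_\cB$, which after integration gives the relation between $[\fm\ga]_\vir$ and $[\ms\ga]_\vir$. Second, the paper's framing is a \emph{single} section $s\in\Hom(T,E)$ (a Huybrechts--Lehn stable pair with $\de$-stability, $0<\de\ll1$), not a basis of global sections; both variants are in the literature, but the dimension/framing-factor bookkeeping differs. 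Finally, the one technical point the paper explicitly flags as going beyond \cite{meinhardt_donaldson,mozgovoy_intersection} is that the framing functor $\Phi=\Hom(T,-)$ is only left exact; this is handled by choosing $T$ so that $\Phi$ is ``$N$-exact'' on the bounded family of semistable sheaves with $\nn\ga\le N$ and running the induction within that range. Your ``higher-cohomology-free'' clause in Step~1 is exactly this condition, but it is the place where genuine care is needed, not something ``routine''.
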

This theorem relates the $E$-polynomials of the intersection cohomologies $\IH^*(\mm\ga,\bQ)$ and the $E$-polynomials of the stacks $\ms\ga$ (or of the elements $\hi_c([\ms\ga])\in\kmhm\pt$).
We can formulate it on the level of mixed Hodge structures.
\begin{theorem}
If $J$ is generic and $J\cdot K_S<0$ then
$$\sum_{\ga\in\Ga^*}\bL^{-\oh\dim\mm\ga}\IH^*(\mm\ga)z^\ga
=(\bL^\oh-\bL^{-\oh})\Log\rbr{\sum_{\ga\in\Ga}\bL^{-\oh\dim\ms\ga}\hi_c([\ms\ga])z^\ga}$$
in $\kmhm{\pt}\pser\Ga$ with $\bL=[H^*_c(\bA^1,\bQ)]=[\bQ(-1)[-2]]$.
\end{theorem}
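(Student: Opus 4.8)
The plan is to follow the framed-moduli strategy of \cite{meinhardt_donaldson,mozgovoy_intersection}. First I would introduce, for each $\ga\in\Ga^*$ and each sufficiently positive framing vector $N$, the smooth quasi-projective moduli space $\fm\ga$ of framed Gieseker-semistable vector bundles (a pair consisting of a semistable bundle $E$ of character $\ga$ together with a surjection $E\onto \bC^N\ts \cO_S(-m)$ for $m\gg0$, or the analogous Huybrechts–Lehn framing datum). Smoothness here is exactly where condition (A), i.e. $J\cdot K_S<0$, enters: by Lemma \ref{lm:hereditary} the obstruction space $\Ext^2$ vanishes along the semistable locus, so the framed moduli problem is unobstructed. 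There is a natural projective forgetful morphism $\pi:\fm\ga\to\mm\ga$ whose fibres over the stratum of sheaves with stabilizer $\Aut(E)$ are (open subsets of) Grassmannian-type quotients $\mathrm{Hom}(E,\bC^N\ts\cO_S(-m))^{\mathrm{surj}}/\Aut(E)$; choosing $N$ large makes these fibres pure and makes $\pi$ behave like a small/semismall-type map on each stratum. The key structural input is the decomposition theorem applied to $R\pi_*\ICV_{\fm\ga}$, which expresses $[\fm\ga]_\vir$ in $\kmhm{\pt}$ as $\IH^*(\mm\ga)$ times the (virtual) motive of a generic framing fibre, plus correction terms supported on the singular strata of $\mm\ga$; crucially the generic fibre over a stable point contributes the motive of the Grassmannian $\mathrm{Gr}(N,\ga\cdot v_m)$, while the correction terms over a stratum indexed by a polystable sheaf of type $(\ga_1,\dots,\ga_k)$ are governed inductively by the $\IH^*(\mm{\ga_i})$.

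Second I would package this recursively. Summing over $\ga$ with fixed reduced Hilbert polynomial $p$ and organizing the strata of $\fm\ga$ by the Harder–Narasimhan/Jordan–Hölder type of the underlying sheaf, the generating function $\sum_\ga \bL^{-\oh\dim\fm\ga}[\fm\ga]_\vir z^\ga$ factors, after dividing out the framing Grassmannian motive and taking $N\to\infty$ in a suitable completed sense, as a plethystic exponential of $\sum_{\ga\in\Ga^*}\bL^{-\oh\dim\mm\ga}\IH^*(\mm\ga)z^\ga$. On the other side, the same framed moduli space admits a second stratification by Jordan–Hölder type of semistable sheaves, which on passing to motives gives $\sum_\ga \bL^{-\oh\dim\fm\ga}[\fm\ga]_\vir z^\ga$ as (the framing-Grassmannian factor times) $\Exp$ of $\sum_{\ga\in\Ga}\bL^{-\oh\dim\ms\ga}\hi_c([\ms\ga])z^\ga$ up to the $(\bL^\oh-\bL^{-\oh})$ normalization coming from the $\bC^*$-stabilizer and the precise relation between $\dim\ms\ga$, $\dim\mm\ga$ and $\hi(\ga,\ga)$ recorded above. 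Condition (B) is used here: genericity of $J$ forces $\ang{\ga,\ga'}=0$ for all $\ga,\ga'$ with $p_J(\ga)=p_J(\ga')$ (Remark \ref{rem:generic}), so the motivic Hall-algebra product on this slope slice is \emph{commutative}, which is what makes the plethystic $\Exp$/$\Log$ formalism of \S\ref{preliminaries} legitimate and lets the two stratifications be compared term by term.

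Third, equating the two expressions for the framed generating function and applying $\Log$ (valid since $\Ga\cong\bN$ is locally finite, so we work in $\kmhm{\pt}\pser\Ga$) gives exactly
$$\sum_{\ga\in\Ga^*}\bL^{-\oh\dim\mm\ga}\IH^*(\mm\ga)z^\ga
=(\bL^\oh-\bL^{-\oh})\Log\rbr{\sum_{\ga\in\Ga}\bL^{-\oh\dim\ms\ga}\hi_c([\ms\ga])z^\ga},$$
which is the claimed identity; applying $E$ then yields the $E$-polynomial version. The main obstacle, and the step I expect to require the most care, is the decomposition-theorem analysis of $R\pi_*\ICV_{\fm\ga}$: one must show that the non-trivial summands of the pushforward are \emph{precisely} accounted for by the lower-$\ga$ intersection cohomology via the recursive Hall-algebra bookkeeping, with no extra ``new'' summands appearing. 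This is where the framing must be chosen carefully (so that $\pi$ is stratified with the right fibre dimensions and the support condition in the decomposition theorem pins down the summands), and where one invokes that $\fm\ga$ is smooth so that $\ICV_{\fm\ga}=\bL^{-\oh\dim\fm\ga}\bQ_{\fm\ga}$. The compatibility of all this with the $\la$-ring/plethystic structure on the graded monoid $\kmhm{M}$ is exactly the content of \cite{mozgovoy_intersection}, which I would cite rather than reprove.
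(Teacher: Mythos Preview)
Your proposal follows essentially the same strategy as the paper --- framed moduli spaces, decomposition theorem for the forgetful map, Hall-algebra bookkeeping, and the citation of \cite{mozgovoy_intersection} for the key support argument --- but the implementation differs in a few places worth flagging. First, the paper's framing datum is a \emph{single} section $s\in\Phi(E)=\Hom(L^{-n},E)$ (Huybrechts--Lehn stable pairs with $0<\de\ll1$), not a rank-$N$ surjection, and there is no $N\to\infty$ limit: instead one fixes a bound $N$ on $\nn\ga$, chooses $\Phi$ to be ``$N$-exact'' (Remark~\ref{good Phi}), and proceeds by induction on $\nn\ga$. The paper emphasises that this is a genuine technical point, since $\Phi$ is only left exact in general. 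Second, the logical organisation is slightly different: rather than computing $[\fm\ga]_\vir$ two ways and equating, the paper first \emph{defines} $\DT^\mot_\ga$ from the stack side (equation~\eqref{eq:mot DT}), then proves the Hall-algebra identity $\sum(-1)^{\vi(\ga)}[\fm\ga\to\mm\ga]_\vir=\Exp\bigl(\sum(-1)^{\vi(\ga)}[\bP^{\vi(\ga)-1}]_\vir\DT^\mot_\ga\bigr)$ (Theorem~\ref{framed and DT}) via a Harder--Narasimhan argument in the framed category, and finally shows $\DT^\mhm_\ga=\ICV_{\mm\ga}$ by induction plus the decomposition-theorem analysis from \cite[Theorem~5.4]{mozgovoy_intersection}. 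Third, a small correction: condition~(B) does not make the Hall algebra itself commutative; it makes the twist in the \emph{target} of the integration map $I:H\to\kvar{\mm{}}$ trivial, so that $I$ is a ring homomorphism into an untwisted commutative ring. With these adjustments your outline matches the paper's proof.
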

This statement can be further generalized to an equation in the \la-ring $\hkmhm{\mm{}}$ \eqref{complete Kvar}, where $M=\bigsqcup_{\ga\in\Ga}\mm\ga$ is a \Ga-graded commutative monoid with a finite multiplication
\eq{\mu:\mm\ga\xx\mm{\ga'}\to\mm{\ga+\ga'},\qquad (E,F)\mto E\oplus F.}

\begin{remark}
We can consider \Ga as a \Ga-graded monoid in the category of algebraic varieties, which consists of a single point at every degree.
Then $\hkmhm{\Ga}\iso\kmhm\pt\pser\Ga$.
The natural projection $a:\mm{}\to\Ga$, $\mm\ga\to\set\ga$ is a homomorphism of \Ga-graded monoids and induces a \la-ring homomorphism
\eq{a_!:\hkmhm{\mm{}}\to\kmhm\pt\pser\Ga,\qquad [F\in\MHM(M_\ga)]\mto H^*_c(M_\ga,F)z^\ga.}
We note that 
$$a_!(\ICV_{\mm\ga})=\bL^{-\oh\dim\mm\ga}\IH^*(\mm\ga)z^\ga,\qquad
a_!\hi_c([\ms\ga\to\mm\ga])=\hi_c([\ms\ga])z^\ga.
$$
\end{remark}

\begin{theorem}\label{th:main3}
If $J$ is generic and $J\cdot K_S<0$ then
$$\sum_{\ga\in\Ga^*}\ICV_{\mm\ga}
=(\bL^\oh-\bL^{-\oh})
\Log\rbr{\sum_{\ga\in\Ga}\hi_c\rbr{[\ms\ga\to\mm\ga]_\vir}}$$
in $\hkmhm{\mm{}}$ with $\bL=[H^*_c(\bA^1,\bQ)]=[\bQ(-1)[-2]]$.
\end{theorem}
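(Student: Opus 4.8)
The plan is to follow the strategy of \cite{meinhardt_donaldson}: introduce an auxiliary \emph{smooth} moduli space of framed sheaves mapping properly onto $\mm\ga$, and extract $\ICV_{\mm\ga}$ from the decomposition theorem applied to this map. Fix a very ample $\cO_S(1)$ and integers $m\gg0$, $N\gg0$ with $N$ even. For $m\gg0$ every semistable sheaf $E$ of class $\ga$ has $H^{>0}(E(m))=0$, so $n_\ga:=\hi(E(m))$ depends only on $\ga$ and is linear in $\ga$. In \S\ref{sec:framed} we construct the moduli space $\fm\ga$ of $\delta$-stable framed sheaves $\bigl(E,\,s\colon\cO_S(-m)^{\oplus N}\to E\bigr)$ for $\delta$ in a suitable chamber, with the following properties: (i) $\fm\ga$ is a smooth projective variety — smoothness is precisely where we use $\Ext^2(E,E)=0$, i.e.\ Lemma \ref{lm:hereditary}, hence the hypothesis $J\cdot K_S<0$; (ii) the forgetful morphism $\pi_\ga\colon\fm\ga\to\mm\ga$ is projective; (iii) over the open locus $\mm\ga^{s}\subseteq\mm\ga$ of stable sheaves $\pi_\ga$ is a Zariski-locally trivial $\bP^{\,Nn_\ga-1}$-bundle, since over a stable $E$ the $\delta$-stable framings are exactly the nonzero ones modulo $\Aut(E)=\bC^*$.

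Next I would establish a Hall-type identity relating the framed and stack invariants. Every framed sheaf has a unique maximal framed-destabilizing subsheaf; this presents $E$, up to $S$-equivalence and hence as a point of the commutative monoid $\mm{}=\bigsqcup_\ga\mm\ga$ via $\mu$, as the direct sum of a $\delta$-stable framed part and a Gieseker-semistable part carrying no framing. Since $J$ is generic, all classes with $p_J=p$ pair trivially, $\ang{\ga,\ga'}=0$, so the relevant (twisted) product on $\hkmhm{\mm{}}\pser{\Ga}$ is commutative; unwinding the above decomposition gives, with $\mathbf I=\sum_{\ga}\hi_c[\ms\ga\to\mm\ga]z^\ga$ and $\mathbf F=\sum_{\ga}\hi_c[\fm\ga\to\mm\ga]z^\ga$,
$$\mathbf F\cdot\mathbf I=\tau_N(\mathbf I),\qquad \tau_N\colon z^\ga\mapsto \bL^{\,Nn_\ga}z^\ga,$$
where $\tau_N$ is a $\la$-ring endomorphism of $\hkmhm{\mm{}}\pser{\Ga}$ because $\ga\mapsto Nn_\ga$ is linear and even-valued. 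Hence $\mathbf F=\tau_N(\mathbf I)\cdot\mathbf I^{-1}$.

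Finally I would run an induction along $\Ga\cong\bN$, feeding in the decomposition theorem. Since $\fm\ga$ is smooth and $\pi_\ga$ proper, $\pi_{\ga*}\IC_{\fm\ga}$ splits in $D^b\MHM(\mm\ga)$ into shifts of twisted intersection complexes of closed subvarieties of $\mm\ga$; property (iii) forces the summands with full support $\mm\ga$ to be exactly $\IC_{\mm\ga}$ with multiplicity $H^*(\bP^{\,Nn_\ga-1})$, all others being supported on the strictly semistable locus $\mm\ga\setminus\mm\ga^{s}$. After the standard virtual normalization (which involves an $\bL^{\oh\ang{\ga,\ga}}$-twist, again using $\ang{\ga,\ga'}=0$) this reads $\hi_c[\fm\ga\to\mm\ga]_\vir=[Nn_\ga]\cdot\ICV_{\mm\ga}+R_\ga$, with $[k]:=(\bL^{k/2}-\bL^{-k/2})/(\bL^\oh-\bL^{-\oh})$ and $R_\ga$ supported on $\mm\ga\setminus\mm\ga^{s}$. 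Assuming the theorem for all $\ga'<\ga$: on one hand, the degree-$\ga$ part of $\mathbf F=\tau_N(\mathbf I)\cdot\mathbf I^{-1}$ equals $\bL^{\,Nn_\ga}\hi_c[\ms\ga\to\mm\ga]$ plus an expression in the $\hi_c[\ms{\ga'}\to\mm{\ga'}]$ with $\ga'<\ga$, which by induction and the defining recursion of $\Exp$ is a known combination of the $\ICV_{\mm{\ga'}}$; on the other hand, the strata of $\mm\ga\setminus\mm\ga^{s}$ and their transverse singularities are, via the étale-local (Luna slice) description of $\mm\ga$ at a polystable point together with the monoid map $\mu$, built from the $\mm{\ga'}$ with $\ga'<\ga$, so $R_\ga$ is likewise inductively a known combination of the $\ICV_{\mm{\ga'}}$. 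Comparing the two expressions for $\hi_c[\fm\ga\to\mm\ga]_\vir$ isolates $\ICV_{\mm\ga}$ with multiplicity $[Nn_\ga]$ and lets one solve for the degree-$\ga$ coefficient of $\mathbf I_\vir=\sum_{\ga}\hi_c[\ms\ga\to\mm\ga]_\vir z^\ga$ in the form predicted by $(\bL^\oh-\bL^{-\oh})\Log(\mathbf I_\vir)$; a limit $N\to\infty$ in the $\bL^{-1}$-adic topology, organized so that the $N$-dependent pieces decouple, then removes the auxiliary parameter and leaves precisely the identity $\sum_{\ga\in\Ga^*}\ICV_{\mm\ga}=(\bL^\oh-\bL^{-\oh})\Log\bigl(\sum_{\ga\in\Ga}\hi_c([\ms\ga\to\mm\ga]_\vir)\bigr)$ in $\hkmhm{\mm{}}$.

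I expect the last step to be the main obstacle: the delicate point is the support analysis, i.e.\ showing that the only intersection complex occurring in $\pi_{\ga*}\IC_{\fm\ga}$ not already accounted for by lower classes is $\ICV_{\mm\ga}$ itself, and that the boundary contributions $R_\ga$, summed over all $\ga$, reorganize exactly into the lower-order terms dictated by the plethystic exponential — and that their $N$-dependence is controlled enough for the $N\to\infty$ limit to make sense. This is where smoothness of $\fm\ga$ (hence condition (A)) and genericity of $J$ (condition (B), making the Hall identity commutative and untwisted) are both indispensable, and it is the step I would expect to require the most care.
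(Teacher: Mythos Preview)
Your overall architecture matches the paper's: auxiliary smooth framed moduli spaces $\fm\ga$ with a proper map $\pi:\fm\ga\to\mm\ga$, a Hall-algebra identity expressing $[\fm\ga\to\mm\ga]$ in terms of the stack classes, the decomposition theorem for $\pi_!\ICV_{\fm\ga}$, and induction on~$\ga$. Where you diverge is in the implementation, and one of the divergences is precisely the point the paper singles out as its technical novelty.

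First, the paper uses \emph{one}-dimensional framing, i.e.\ pairs $(E,s)$ with a single $s\in\Hom(T,E)$ for $T=L^{-n}$; there is no $N\to\infty$ limit anywhere. The auxiliary dependence is on the twist $T$, not on a framing multiplicity, and it is eliminated not by a limit but by allowing $T$ to change with the induction step: for each target class $\ga$ one picks $n\gg0$ so that $\Phi=\Hom(T,-)$ is ``$N$-exact'' for $N=\nn\ga$ (Remark~\ref{good Phi}). This is exactly what your sentence ``fix $m\gg0$'' sweeps under the rug --- one cannot fix a single $m$ making $H^{>0}(E(m))=0$ for \emph{all} semistable $E$ across the infinite semigroup $\Ga$, so your generating-series identity $\mathbf F\cdot\mathbf I=\tau_N(\mathbf I)$ only holds degree-by-degree after such a class-dependent choice. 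The paper isolates this as the main difficulty (``the framing functors \dots\ are not exact'') and resolves it by stating Theorem~\ref{framed and DT} only for summands with $\nn\ga\le N$.

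Second, rather than your raw identity $\mathbf F=\tau_N(\mathbf I)\cdot\mathbf I^{-1}$ followed by an $\bL^{-1}$-adic limit, the paper massages the Hall identity (via the substitution $x\mapsto(-\bL^\oh)^{-\vi(\ga)}x$) directly into the plethystic form
\[
\sum_\ga(-1)^{\vi(\ga)}[\fm\ga\to\mm\ga]_\vir
=\Exp\Bigl(\sum_\ga(-1)^{\vi(\ga)}[\bP^{\vi(\ga)-1}]_\vir\,\DT^\mot_\ga\Bigr),
\]
valid for $\nn\ga\le N$. After applying $\hi_c$ and expanding the $\Exp$, the degree-$\ga$ term reads
\[
(-1)^{\vi(\ga)}\pi_!\ICV_{\fm\ga}
=\sum_{\substack{m:\Ga^*\to\bN\\ \sum m_\al\al=\ga}}\prod_\al S^{m_\al}\bigl((-1)^{\vi(\al)}[\bP^{\vi(\al)-1}]_\vir\,\DT^\mhm_\al\bigr),
\]
and the support/identification step you flag as the ``main obstacle'' is not redone here but is imported verbatim from \cite[Theorem~5.4]{mozgovoy_intersection}, the only input being that $\Phi$ is $N$-exact. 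So the paper's route buys a cleaner endgame (no limit, no $\bL^{-1}$-adic bookkeeping) at the cost of the class-dependent choice of $T$; your $N$-copies-plus-limit route is closer in spirit to the original quiver argument of \cite{meinhardt_donaldson}, but as written it does not confront the non-exactness of the framing functor, which is the one genuinely new issue in the surface setting.
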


Our goal will be to prove this theorem.
The strategy of the proof follows the ideas of \cite{meinhardt_donaldson,mozgovoy_intersection,meinhardt_donaldsona}. Namely, we construct auxiliary smooth moduli spaces $\fm\ga$ of framed sheaves together with projections $\pi:\fm\ga\to\mm\ga$.
Then we relate $\pi_!\ICV_{\fm\ga}$ to both sides of the above theorem.
In contrast to \cite{meinhardt_donaldson,mozgovoy_intersection,meinhardt_donaldsona}, we will need to overcome a technical difficulty arising from the fact that the framing functors (see \S\ref{sec:framed}) needed in the construction of $\fm\ga$ are not exact.

\subsection{Motivic Hall algebra and DT invariants}
Let
$$\mm{}=\bigsqcup_{\ga\in\Ga}\mm\ga,\qquad
\ms{}=\bigsqcup_{\ga\in\Ga}\ms\ga.$$
There are natural maps $p_\ga:\ms\ga\to \mm\ga$ and $p:\ms{}\to \mm{}$.
We have seen that $\mm{}$ is a commutative \Ga-graded monoid.
Therefore $\kvar{\mm{}}$ is equipped with a (pre-)\la-ring structure.
We define the motivic Hall algebra
$$H=\kvar{\ms{}}=\bop_{\ga\in\Ga}H_\ga,\qquad H_\ga=\kvar{\ms\ga}$$
with the Ringel-Hall multiplication as in \cite[\S4.2]{bridgeland_introduction} (see also \cite{joyce_configurationsa}).
The following map, called an integration map,
\eq{I:H\to \kvar{\mm{}},\qquad
[X\to \ms \ga]\mto \bL^{-\oh\dim\ms \ga}[X\to\ms \ga\to \mm \ga]}
is an algebra homomorphism by \cite{joyce_configurationsa,reineke_counting} if the category of semistable sheaves with Chern characters $\ga\in\Ga$ has homological dimension one (this is the case under our assumptions by Lemma \ref{lm:hereditary}).
Note that $\dim\ms \ga=-\hi(\ga,\ga)$.

\begin{remark}
To make $I$ an algebra homomorphism, one actually defines multiplication in $\kvar{\mm{}}$ as
$$[X\to \mm \ga]\cdot [Y\to \mm {\ga'}]
=\bL^{\oh(\hi(\ga,\ga')-\hi(\ga',\ga))}[X\xx Y\to \mm {\ga+\ga'}].$$
But $\hi(\ga,\ga')=\hi(\ga',\ga)$ if $p_J(\ga)=p_J(\ga')$ (or $\mu_J(\ga)=\mu_J(\ga')$) and $J$ is generic, hence we omit the twist.
This is also true if $J=\pm K_S$ and all our arguments will also work in this situation.
\end{remark}

Previously we defined ring homomorphisms
$$H\xto I \kvar{\mm{}}\xto{\hi_c}\kmhm{\mm{}}.$$
Similarly, we can define completions
$$\what H=\prod_\ga H_\ga,\qquad 
\hkvar{\mm{}}=\prod_\ga \kvar{\mm\ga},\qquad
\hkmhm{\mm{}}=\prod_\ga \kmhm{\mm\ga}.$$
and ring homomorphisms between them.

We define motivic Donaldson-Thomas invariants $$\DT^\mot_{\ga}\in\kvar{\mm\ga}$$ 
by the formula
\eql{
I\rbr{\one_{\ms{}}}
=\sum_{\ga\in\Ga}\bL^{-\oh\dim\ms\ga}[\ms\ga\to \mm\ga]
=\Exp\rbr{\frac{\sum_{\ga\in\Ga^*}\DT^\mot_\ga}{\bL^\oh-\bL^{-\oh}}}
,}{eq:mot DT}
where $\one_{\ms{}}:\ms{}\to\ms{}$ is an identity map.
Equivalently,
\eql{\sum_{\ga\in\Ga^*}\DT^\mot_\ga
=\rbr{\bL^\oh-\bL^{-\oh}}
\Log\rbr{\sum_{\ga\in\Ga}[\ms\ga\to \mm\ga]_\vir}.
}{eq:DT mot}

We define MHM Donaldson-Thomas invariants as
$$\DT_\ga^\mhm=\hi_c(\DT_\ga^\mot)\in\kmhm{\mm\ga}$$
or, equivalently, by the formula
\eql{\hi_c\, I\rbr{\sum_{\ga}\one_{\ms\ga}}
=\Exp\rbr{\frac{\sum_{\ga\ne0}\DT^\mhm_\ga}{\bL^\oh-\bL^{-\oh}}}
.}{eq:mhm DT}

\subsection{Framed moduli spaces}
\label{sec:framed}
Let $\cA=\Coh S$ and $T\in\cA$ be some object.
We consider a left exact functor
$$\Phi:\cA\to\Vect,\qquad \Phi(E)=\Hom(T,E)$$
and define the category $\cA_{f}$ of framed objects as follows.
Its objects are triples $(E,V,s)$, where $E\in\cA$, $V\in\Vect$ and $s:V\to\Phi(E)$ is a linear map.
A morphism $f:(E,V,s)\to(E',V',s')$ is a pair $(f_1,f_2)$, where $f_1:E\to E'$ and $f_2:V\to V'$ satisfy $\Phi(f_1)s=s'f_2$.
One can show that $\cA_f$ is an abelian category.
We will denote an object $(E,0,0)$ by $E$ and call it an unframed object.
We will denote an object $(E,\bC,s)$ by $(E,s)$ and consider $s$ as an element of $\Phi(E)$.

Given a coherent sheaf $E$ with Chern character~$\ga$, we define
\eq{\vi(\ga)=\vi(E):=\sum_{i\ge0}(-1)^i\dim R^i\Phi(E)=\hi(T,E).}
In order to construct moduli spaces of stable framed objects we will use results of \cite{huybrechts_stable} (or more precisely, the dual version of these results).
First, we will reformulate the stability condition from~\cite{huybrechts_stable}.
Given a polynomial $\de\in\bQ[n]$, define for any triple $\lb E=(E,V,s)$
$$p_\de(\lb E)=\frac{P_J(E)+\dim V\cdot \de}{\rk(E)},$$
where $P_J(E)$ is the Hilbert polynomial of $E$ (with respect to the polarization $J$).
We will say that an object $\lb E$ is $\de-$stable if for any proper $\lb G\sbs\lb E$ we have
$$p_\de(\lb G)<p_\de(\lb E).$$
In the case of a pair $\lb E=(E,s)$ this means
\begin{enumerate}
\item For any $G\sbs E$, we have $p_J(G)<p_\de(\lb E)$.
\item For any proper $G\sbs E$ with $s\in\Phi(G)$, we have
$p_\de(\lb E)<p_J(E/G)$.
\end{enumerate}
Here $p_J(E)$ is the reduced Hilbert polynomial of $E$.
In \cite{huybrechts_stable} the authors constructed the moduli spaces of \de-stable objects.
For our applications we will consider \de to be a constant such that $0<\de\ll1$.
Then $(E,s)$ is stable if and only if
\begin{enumerate}
\item For any $G\sbs E$, we have $p_J(G)\le p_J(E)$. Equivalently, $E$ is Gieseker semistable.
\item For any proper $G\sbs E$ with $s\in\Phi(G)$, we have
$p_J(E)<p_J(E/G)$. Equivalently, $p_J(G)<p_J(E)$.
\end{enumerate}

Let $\fm\ga$ denote the moduli space of stable pairs $(E,s)$ with $\ch E=\ga$. This is a projective variety and there is a projective map
$$\pi:\fm\ga\to \mm\ga,\qquad (E,s)\mto E,$$
where $\mm\ga$ denotes as before the moduli space of (Gieseker) semistable sheaves on $S$ with Chern character $\ga$.

These moduli spaces are instrumental for a new description of the DT invariants.
Let $\cB\sbs\cA=\Coh S$ be an abelian category of Gieseker semistable sheaves $E$ with the reduced Hilbert polynomial $p_J(E)=p$.
Let $\nn-$ be some norm on $H^\even(S,\bR)$.

\begin{definition}
Given a constant $N>0$ and a left exact functor $\Phi:\cA\to\Vect$, we will say that \Phi is $N$-exact if $R^i\Phi(E)=0$ for $i>0$ and $\Phi(E)\ne0$
for all semistable $E$ with $\nn{\ch E}\le N$.
\end{definition}

\begin{remark}
\label{good Phi}
For a fixed $N>0$, the set of $\ga\in\Ga$ with $\nn\ga\le N$ is finite, hence the family of semistable sheaves of type $\ga$ with $\nn\ga\le N$ is bounded \cite[Theorem 3.3.7]{huybrechts_geometry} in the sense of \cite[\S1.7]{huybrechts_geometry}.
This implies that we can choose $T=L^{-n}$, where $L$ is an ample line bundle and $n\gg0$ such that
\begin{enumerate}
\item $\Ext^i(T,E)=0$, $i>0$
\item $\Hom(T,E)\ne0$
\end{enumerate}
for all semistable $E$ with Chern character $\ga$ and $\nn\ga\le N$.
Therefore $\Phi=\Hom(T,-)$ is $N$-exact.
This rather arbitrary choice of $T$ indicates that the moduli spaces $\fm\ga$ play a purely auxiliary role in our analysis of the moduli spaces $\mm\ga$.
\end{remark}


\begin{remark}
Let us show that if $\Phi$ is $N$-exact then $\fm\ga$ are smooth for $\nn\ga\le N$ as we will need this fact when we will work with intersection complexes on $\fm\ga$.
If $E$ is semistable and has Chern character \ga, then $\dim\Phi(E)=\vi(\ga)$ (under our assumptions on $\Phi$ and $\ga$).
The moduli stack $\fs\ga$ of semistable framed objects is open in $\ms\ga\xx \bA^{\vi(\ga)}$, where $\ms\ga$ is a smooth moduli stack of semistable sheaves.
As the automorphism groups of objects in $\fs\ga$ are trivial, we conclude that $\fm\ga$ is smooth.
\end{remark}

\begin{theorem}
\label{framed and DT}
We have
$$\sum_{p_J(\ga)=p}(-1)^{\vi(\ga)}[\fm\ga\to \mm \ga]_\vir
=\Exp\rbr{\sum_{p_J(\ga)=p,\ga\ne0}(-1)^{\vi(\ga)}[\bP^{\vi(\ga)-1}]_\vir\DT^\mot_\ga}$$
in $\kvar{\mm{}}$,
for summands with $\nn\ga\le N$.
\end{theorem}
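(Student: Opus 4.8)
The plan is to relate both sides to the pushforward $\pi_!\ICV_{\fm\ga}$ along $\pi:\fm\ga\to\mm\ga$, using the motivic Hall algebra to decompose a framed sheaf according to the Harder--Narasimhan-type data coming from the framing section. First I would observe that since $\Phi$ is $N$-exact, for $\nn\ga\le N$ the stack $\fs\ga$ of semistable framed objects is open in $\ms\ga\times\bA^{\vi(\ga)}$, and a pair $(E,s)$ with $E$ semistable is $\de$-\emph{stable} (for $0<\de\ll1$) precisely when $s$ does not factor through any proper $G\subset E$ with $p_J(G)=p_J(E)$. The strategy is to express the class $[\fm\ga\to\mm\ga]$ in the Hall algebra $H$ in terms of the classes $[\ms\beta\to\mm\beta]$: every framed semistable object $(E,s)$ has a canonical subobject $E'\subset E$, namely the smallest subsheaf (with the same reduced Hilbert polynomial) through which $s$ factors, giving $E'$ with $s$ surjective onto its "saturation" and $E/E'$ an unframed semistable sheaf. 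This exhibits $\fm\ga$, in the Hall algebra, as a product of a "framed-stable part" with the unframed part $\one_{\ms{}}$, and the framed-stable part contributes the $[\bP^{\vi-1}]$ factors after applying the integration map $I$.

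The key steps, in order: (1) Set up the Hall-algebra identity $\sum_\ga(-1)^{\vi(\ga)}[\fm\ga\to\ms\ga]_{\mathrm{fr}}\cdot\one_{\ms{}}=(\text{class of all framed semistable objects})$, where the left factor encodes the stability-of-the-section condition; concretely, a semistable framed object $(E,V,s)$ of rank one in $V$ with $\ch E=\ga$ is, after fixing the flag "image of $s$", an element of the total space of $\Phi$ over $\ms\ga$, i.e.\ class $[\bA^{\vi(\ga)}]$, while the $\de$-stable locus removes sections factoring through proper subobjects, leaving a $\bP^{\vi(\ga)-1}$-bundle worth of data (one uses $R^{>0}\Phi=0$ so that $\Phi$ is additive on the relevant short exact sequences and the section spaces have the expected dimension $\vi=\hi(T,E)$). (2) Apply the integration map $I$ (an algebra homomorphism by Lemma \ref{lm:hereditary}, homological dimension one) to convert the Hall-algebra product into a product in $\kvar{\mm{}}$; here the twist $\bL^{\oh\ang{\ga,\ga'}}$ drops out because $\ang{\ga,\ga'}=0$ for classes of equal reduced Hilbert polynomial. (3) Recognize $I(\one_{\ms{}})=\sum_\ga\bL^{-\oh\dim\ms\ga}[\ms\ga\to\mm\ga]=\Exp\!\big(\sum_{\ga\ne0}\DT^\mot_\ga/(\bL^\oh-\bL^{-\oh})\big)$ by definition \eqref{eq:mot DT}, and normalize the $\fm\ga$ side by $\bL^{-\oh\dim\fm\ga}$ and the sign $(-1)^{\vi(\ga)}$ (note $(-1)^{\vi}[\bP^{\vi-1}]_\vir=-(-\bL^\oh)^{-1}-\dots$ telescopes; more to the point $(-1)^{\vi}[\bP^{\vi-1}]_\vir\cdot(\bL^\oh-\bL^{-\oh})$ relates to the framed-stable contribution), so that after dividing by $I(\one_{\ms{}})$ one is left with a plethystic exponential whose logarithm is $\sum(-1)^{\vi(\ga)}[\bP^{\vi(\ga)-1}]_\vir\DT^\mot_\ga$. (4) Take $\Exp$ of the result, truncating at $\nn\ga\le N$, which is legitimate since only finitely many $\ga$ have $\nn\ga\le N$ and the multiplication $\mu$ on $\mm{}$ is finite.

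The main obstacle is step (1): because the framing functor $\Phi=\Hom(T,-)$ is only \emph{left} exact, a short exact sequence $0\to E'\to E\to E''\to0$ of semistable sheaves need not give a short exact sequence of the $\Phi$'s unless $R^1\Phi(E')=0$, which is exactly what $N$-exactness buys us for $\nn\ga\le N$; one must be careful that in the Hall-algebra decomposition all pieces appearing for a fixed total $\ga$ with $\nn\ga\le N$ also satisfy $\nn\beta\le N$, so that $\Phi$ behaves additively throughout — this is why the theorem is stated "for summands with $\nn\ga\le N$". Managing this additivity, and checking that the $\de$-stable locus inside the $\bA^{\vi}$-bundle of sections is genuinely a $\bP^{\vi-1}$-bundle (equivalently, that the locus of sections factoring through a proper subobject is a union of lower-dimensional linear subspaces that assembles, after quotienting by $\ms{}$, into the unframed class), is the technical heart. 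Once the Hall-algebra identity is in place, the remaining manipulations are formal consequences of the definition of $\DT^\mot_\ga$ and the multiplicativity of $I$ and $\Exp$.
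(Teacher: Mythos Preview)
Your skeleton matches the paper: the Hall-algebra identity $\one^{\fr}_{\cB}=\one^{\fr,s}_{\cB}\circ\one_{\cB}$ (coming from the unique stable framed subobject $(E',s)\subset(E,s)$, which is indeed the minimal $E'\subset E$ in $\cB$ with $s\in\Phi(E')$), followed by the integration map $I$, is exactly the mechanism used. But there is a genuine gap in your step~(3). The passage from ``divide by $I(\one_{\ms{}})$'' to ``one is left with a plethystic exponential'' is unjustified: a quotient of an arbitrary series by an $\Exp$ need not be an $\Exp$. What makes it work is that $I(\one^{\fr}_{\cB})$ is \emph{itself} a plethystic exponential. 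By $N$-exactness the class of all framed pairs is $\sum_\ga\bL^{\vi(\ga)}[\ms\ga\to\mm\ga]_\vir$, and because $\vi(\ga)=\hi(T,\ga)$ is \emph{linear} in $\ga$, the degree-wise rescaling $x\mapsto\bL^{\vi(\ga)}x$ is a $\la$-ring endomorphism of $\hkvar{\mm{}}$. Applying it to the defining relation \eqref{eq:mot DT} yields
\[
\sum_\ga\bL^{\vi(\ga)}[\ms\ga\to\mm\ga]_\vir
=\Exp\!\left(\frac{\sum_\ga\bL^{\vi(\ga)}\DT^{\mot}_\ga}{\bL^{\oh}-\bL^{-\oh}}\right),
\]
so the quotient is $\Exp\big(\sum_\ga(\bL^{\vi(\ga)}-1)\DT^\mot_\ga/(\bL^\oh-\bL^{-\oh})\big)$. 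A \emph{second} plethystic substitution $x\mapsto(-\bL^\oh)^{-\vi(\ga)}x$ then converts $\bL^{\oh\vi(\ga)}[\fm\ga]_\vir$ into $(-1)^{\vi(\ga)}[\fm\ga]_\vir$ on the left, and $(\bL^{\vi(\ga)}-1)/(\bL^\oh-\bL^{-\oh})$ into $(-1)^{\vi(\ga)}(\bL^{\vi(\ga)/2}-\bL^{-\vi(\ga)/2})/(\bL^\oh-\bL^{-\oh})=(-1)^{\vi(\ga)}[\bP^{\vi(\ga)-1}]_\vir$ on the right. These two $\la$-ring substitutions, both resting on the linearity of $\vi$, are the computational heart and are absent from your outline.

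Relatedly, your attempt to see $[\bP^{\vi-1}]$ geometrically---``the $\de$-stable locus inside the $\bA^{\vi}$-bundle of sections is genuinely a $\bP^{\vi-1}$-bundle''---is a misconception. Over a strictly semistable $E$ the unstable sections form a union of proper linear subspaces of $\Phi(E)$ of varying dimensions (one for each destabilising $G\subset E$ in $\cB$), so the stable locus is not a $\bP^{\vi-1}$-bundle. The factor $[\bP^{\vi-1}]_\vir$ does not arise as a fibre anywhere in the argument; it appears only at the last step as the purely algebraic quantity $(\bL^{\vi/2}-\bL^{-\vi/2})/(\bL^\oh-\bL^{-\oh})$. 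Your concern about pieces $\beta$ of $\ga$ staying within the $N$-exact range is well taken; it is handled here because for generic $J$ all $\ga\in\Ga^*$ are proportional (Remark~\ref{rem:generic}), so every summand $\beta$ of $\ga$ satisfies $\nn\beta\le\nn\ga$ for any norm.
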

\begin{proof}
Let $\cB\sbs\cA$ be the subcategory of Gieseker semistable vector bundles $E$ with $p(E)=p$.
One can show that for a pair $(E,s)$ with $E\in\cB$ there exists a unique stable subobject
$$(E',s)\sbs (E,s)$$
with $E',E/E'\in\cB$ (this is a Harder-Narasimhan filtration with respect to an appropriate stability condition on $\cB_\fr$).
Let 
$$\one^{\fr,s}_\cB=\sum_{\ga\in\Ga}
[\fs\ga\to\ms\ga]\in\hat H$$
and similarly let $\one^\fr_\cB\in\hat H$ parametrize all pairs $(E,s)$ with $E\in\cB$.
Let
$$\one_\cB=\sum_{\ga\in\Ga}
[\ms\ga\to\ms\ga]\in\hat H.$$
Then the above Harder-Narasimhan filtration translates to
an equation
$$\one^\fr_\cB=\one^{\fr,s}_\cB\circ\one_\cB$$
in the Hall algebra $\what H$.
We should stress that this is a relation in the Hall algebra of $\cB$, although we used the Harder-Narasimhan filtration in the category $\cB_\fr$.
Applying the integration map $I:\what H\to \hkvar{\mm{}}$, we obtain the following relation
$$\sum_\ga\bL^{\vi(\ga)}[\ms\ga\to\mm\ga]_\vir
=\sum_\ga\bL^{-\oh\dim\ms\ga}[\fm\ga\to\mm\ga]
\cdot\sum_\ga[\ms\ga\to\mm\ga]_\vir
$$
for $\nn\ga\le N$.
Using the formula
$$\dim \fm\ga
=-\hi(\ga,\ga)+\vi(\ga)=\dim\ms\ga+\vi(\ga)$$
we obtain
$$\sum_\ga\bL^{\vi(\ga)}[\ms\ga\to\mm\ga]_\vir
=\sum_\ga\bL^{\oh\vi(\ga)}[\fm\ga\to\mm\ga]_\vir
\cdot\sum_\ga[\ms\ga\to\mm\ga]_\vir.
$$
This can be written in terms of DT invariants
$$
\sum_\ga\bL^{\oh\vi(\ga)}[\fm\ga\to\mm\ga]_\vir
=\Exp\rbr{\sum_\ga(\bL^{\vi(\ga)}-1)\frac{\DT^\mot_\ga}{\bL^\oh-\bL^{-\oh}}}.
$$
Applying the (plethystic) change of variables
$$x\mto (-\bL^\oh)^{-\vi(\ga)}x,\qquad x\in \kvar{\mm\ga},$$
we obtain
\begin{multline*}
\sum_\ga(-1)^{\vi(\ga)}[\fm\ga\to\mm\ga]_\vir
=\Exp\rbr{\sum_\ga(-1)^{\vi(\ga)}\frac{\bL^{\oh\vi(\ga)}-\bL^{-\oh\vi(\ga)}}{\bL^\oh-\bL^{-\oh}}\DT^\mot_\ga}\\
=\Exp\rbr{\sum_\ga(-1)^{\vi(\ga)}[\bP^{\vi(\ga)-1}]_\vir\DT^\mot_\ga}.
\end{multline*}

\end{proof}

\begin{proof}[Proof of Theorem \ref{th:main3}]
By comparing the statement of the theorem and the definition of DT invariants \eqref{eq:DT mot} we have to prove
\eq{\DT^\mhm_\ga=\ICV_{\mm \ga}.}
We can assume by induction that $\DT^\mhm_\al=\ICV_{\mm \al}$ for $\nn\al<N:=\nn\ga$.
Let $T=L^{-n}$, where $L$ is an ample line bundle, and $\Phi=\Hom(T,-)$ be as in Remark~\ref{good Phi}.
Then assumptions of Theorem \ref{framed and DT} are satisfied and we can apply the map $\hi_c:\kvar{\mm{}}\to\kmhm{\mm{}}$ to its statement.
As $\fm\ga$ is smooth, we obtain from \eqref{vir mot to mhm}
$$\hi_c([\fm\ga\to\mm\ga]_\vir)
=\pi_!\ICV_{\fm\ga}.$$
Therefore Theorem \ref{framed and DT} implies
$$(-1)^{\vi(\ga)}\pi_!\ICV_{\fm\ga}
=\sum_{\over{m:\Ga_*\to\bN}{\sum m_\al\al=\ga}}\prod_\al S^{m_\al}\rbr{(-1)^{\vi(\al)}[\bP^{\vi(\al)-1}]_\vir\DT^\mhm_{\al}}.$$
\def\rfa#1{\cite[#1]{mozgovoy_intersection}}
Now we literally repeat the arguments of \cite[Theorem 5.4]{mozgovoy_intersection} to conclude that $\DT_\ga^\mhm=\ICV_{\mm\ga}$.
For all these arguments to work it is enough to assume that $\Phi$ is $N$-exact.
\end{proof}

\section{Some properties of DT invariants}
\label{generating}
As before, we assume that $J$ is a generic (ample) polarization on a surface $S$ with $J\cdot K_S<0$.
Let $\mm\ga$ be the moduli space Gieseker semistable sheaves with Chern character \ga.
Let $\ms\ga=\ms J^\rG(\ga)$ be the moduli stack of Gieseker semistable sheaves
and $\ms\ga^\rM=\ms J^\rM(\ga)$ be the moduli stack of Mumford (or $\mu$-) semistable sheaves with Chern character \ga. 

In the previous section we studied Donaldson-Thomas invariants (\ref{eq:mot DT}, \ref{eq:mhm DT})
$$\DT^\mot_\ga\in\kvar{\mm\ga},\qquad
\DT^\mhm_\ga\in\kmhm{\mm\ga}.
$$
For the actual computations it is more appropriate to study their images in $\kmhm\pt$ or merely their $E$-polynomials or (motivic) Poincar\'e polynomials.
Thus, we define Donaldson-Thomas invariants
\eq{\Om_\ga=P(a_!\DT^\mhm_\ga)=P(a_!\hi_c\DT^\mot_\ga)\in\bQ(y).}
where $a:\mm\ga\to\pt$ is a projection.
Applying \eqref{eq:mot DT}, we can write equivalently
\eq{1+\sum_{p_J(\ga)=p}\cI_\ga z^\ga
=\Exp\rbr{\frac{\sum_{p_J(\ga)=p}\Om_\ga z^\ga}{y\inv-y}},
}
where
\eq{\cI_\ga=\cI(\ga,y;J)=(-y)^{\hi(\ga,\ga)}P(\ms\ga).}
Let us give an explicit formula.
We define the rational invariant 
$\Omb_\ga$ of $\ms\ga$, which is given in terms of $\cI_\ga$ by \cite[Definition 6.22]{Joyce:2004tk}
\eql{\Omb_\ga
=\Omb(\ga,y;J)
=\sum_{\over{\ga_1+\dots+\ga_\ell=\ga}
{p_J(\ga_i)=p_J(\ga)\ \forall i}} \frac{(-1)^{\ell-1}}{\ell}
\prod_{i=1}^\ell \cI(\ga_i,y;J). 
}{I to Omb}
The inverse relation is given by
\eql{
\cI(\ga,y;J)=\sum_{\over{\ga_1+\dots+\ga_\ell=\ga}
{p_J(\ga_i)=p_J(\ga)\ \forall i}}
\frac{1}{\ell!}\prod_{i=1}^\ell \Omb(\ga_i,y;J).
}{Om to I}
Finally, we define the Donaldson-Thomas invariant
\eql{
\Om_\ga=\Om(\ga,y;J)
=(y\inv-y)\sum_{m|\ga}\frac{\mu(m)}m
\Omb(\ga/m,y^m;J),
}{bOmtoOm}
with inverse relation
\eq{
\Omb(\ga,y;J)=\sum_{m|\ga}\frac1m\frac{\Om(\ga/m,y^m;J)}{y^{-m}-y^m}.
}
The main result of the previous section implies
\eq{\Om_\ga=(-y)^{-\dim M_\ga}\sum_n\dim\IH^n(M_\ga)(-y)^n,}
therefore $\Om_\ga\in\bQ[y^{\pm1}]$.

Similarly, we define invariants $\cI^\rM_\ga$, $\Omb^\rM_\ga$, and $\Om^\rM_\ga$ of the moduli stacks $\ms\ga^\rM$.
In particular, for any $\ta\in\bR$,
\eq{1+\sum_{\mu_J(\ga)=\ta}\cI^\rM_\ga z^\ga
=\Exp\rbr{\frac{\sum_{\mu_J(\ga)=\ta}\Om^\rM_\ga z^\ga}{y\inv-y}}.
}

\begin{theorem}
\label{Om G and S}
If $J$ is generic and $J\cdot K_S<0$, then
$$\Om_\ga=\Om_\ga^M.$$
\end{theorem}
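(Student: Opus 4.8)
The plan is to express the Mumford generating function over a fixed slope as a product of the Gieseker generating functions over the reduced Hilbert polynomials refining that slope, by means of a Harder--Narasimhan factorisation in the motivic Hall algebra, together with the observation that all Chern characters sharing a slope give a commutative target for the integration map. Fix $\ta\in\bR$ in the image of $\mu_J$. Since $J$ is generic, any two classes $\ga,\ga'$ with $\mu_J(\ga)=\mu_J(\ga')=\ta$ have $(r,\ga_1)$ proportional to $(r',\ga'_1)$, so $\ang{\ga,\ga'}=0$ by \eqref{skew-sim}; hence the Euler-form twist in the Hall product on the slope-$\ta$ sector is trivial and the integration map lands in a commutative ring there. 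Moreover, by Lemma~\ref{lm:hereditary} together with the vanishing of $\Ext^i(E,F)$ for $i\notin\{0,1,2\}$ on a smooth surface, the category $\cB^\rM_\ta$ of Mumford semistable sheaves of slope $\ta$ is hereditary --- which is exactly the hypothesis under which the integration map was shown in \S\ref{result} to be an algebra homomorphism.

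The core step I would carry out is a Hall-algebra identity. First, $\cB^\rM_\ta$ is abelian, because a subsheaf (resp.\ quotient) of a Mumford semistable sheaf having the same slope is again Mumford semistable (see e.g.\ \cite{huybrechts_geometry}). Second, refining $\mu_J$ by the reduced Hilbert polynomial equips $\cB^\rM_\ta$ with a Harder--Narasimhan formalism whose semistable objects are precisely the Gieseker semistable sheaves of slope $\ta$: a subsheaf $G$ of $E\in\cB^\rM_\ta$ either has $\mu_J(G)<\ta$, forcing $p_J(G)\prec p_J(E)$ automatically, or has $\mu_J(G)=\ta$ and so lies in $\cB^\rM_\ta$. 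Hence every object of $\cB^\rM_\ta$ has a unique filtration with Gieseker semistable subquotients of strictly decreasing reduced Hilbert polynomial. The Bogomolov inequality $\De\ge0$ bounds from above the constant terms of the reduced Hilbert polynomials of these (Gieseker semistable) subquotients, and, for a fixed total class $\ga$, this bounds both their number and their Chern characters; so in the completed motivic Hall algebra of $\cB^\rM_\ta$ the Harder--Narasimhan filtration becomes
$$\sum_{\mu_J(\ga)=\ta}\bigl[\ms\ga^\rM\bigr]=\prod_{p}\ \Bigl(\sum_{p_J(\ga)=p}\bigl[\ms\ga\bigr]\Bigr),$$
the product running over reduced Hilbert polynomials $p$ of slope $\ta$, with factors ordered by decreasing $p$ --- the ordering being immaterial once the integration map is applied.

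Then I would apply the integration map, followed by $\hi_c$ and the passage to $E$- (or Poincar\'e) polynomials, exactly as in the derivation of the defining generating functions for $\Om_\ga$ and $\Om^\rM_\ga$. Using $\dim\ms\ga^\rM=\dim\ms\ga=-\hi(\ga,\ga)$ (again a consequence of Lemma~\ref{lm:hereditary}), commutativity of the target, and the fact that $\Exp$ converts sums into products, the right-hand side becomes
$$\prod_{p}\Bigl(1+\sum_{p_J(\ga)=p}\cI_\ga z^\ga\Bigr)=\prod_{p}\Exp\Bigl(\frac{\sum_{p_J(\ga)=p}\Om_\ga z^\ga}{y\inv-y}\Bigr)=\Exp\Bigl(\frac{\sum_{\mu_J(\ga)=\ta}\Om_\ga z^\ga}{y\inv-y}\Bigr),$$
since the nonzero classes with $p_J(\ga)=p$, as $p$ ranges over reduced Hilbert polynomials of slope $\ta$, partition the nonzero classes with $\mu_J(\ga)=\ta$. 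The left-hand side becomes $1+\sum_{\mu_J(\ga)=\ta}\cI^\rM_\ga z^\ga$, which by definition of $\Om^\rM_\ga$ equals $\Exp\bigl(\sum_{\mu_J(\ga)=\ta}\Om^\rM_\ga z^\ga/(y\inv-y)\bigr)$. Applying $\Log$ and reading off the coefficient of $z^\ga$ yields $\Om^\rM_\ga=\Om_\ga$ for every $\ga$ of slope $\ta$; as $\ta$ is arbitrary, this proves the theorem, and the same argument applies verbatim to $E$-polynomials and to the images in $\kmhm\pt$.

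I expect the main obstacle to be the Harder--Narasimhan factorisation in the completed Hall algebra: one must verify rigorously that $\cB^\rM_\ta$ supports a genuine Harder--Narasimhan formalism whose semistable objects are exactly the Gieseker semistable sheaves (existence and uniqueness of the filtration), and --- the quantitative heart of the matter --- that for a fixed total class only finitely many Gieseker pieces, with bounded Chern characters, can occur, which is precisely where the Bogomolov inequality is indispensable.
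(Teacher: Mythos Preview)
Your proposal is correct and follows essentially the same route as the paper: both arguments use the Harder--Narasimhan filtration of Mumford semistable sheaves with respect to Gieseker stability to factorise $\sum_{\mu_J(\ga)=\ta}\cI^\rM_\ga z^\ga$ as a product over reduced Hilbert polynomials (the paper parametrises these by $\nu=\ga_2/r$ via \eqref{eq:p J ta}), invoke the Bogomolov inequality together with genericity of $J$ for the finiteness of HN types, and then apply $\Exp$/$\Log$ to match the DT invariants. The paper's finiteness argument and yours are the same in content; you have correctly identified it as the quantitative heart of the proof.
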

\begin{proof}
Every Mumford semistable sheaf $F$ has a unique Harder-Narasimhan filtration 
$$0=F_0\sbs F_1\sbs \dots\sbs F_n=F$$
with respect to the Gieseker stability.
The factors of this filtration are Gieseker (hence Mumford) semistable with slope $\mu_J(F)$.
Let $\ga=(r,\ga_1,\ga_2)=\ch F$ and $\ga^{(i)}=(r^{(i)},\ga_1^{(i)},\ga_2^{(i)})=\ch F_i/F_{i-1}$ for $i=1,\dots,n$.
The sequence
$$(\ga^{(1)},\dots,\ga^{(n)})$$
is called the type of the HN filtration and we claim that there occurs a finite number of such types for the family of all Mumford semistable sheaves of fixed type \ga.
As $J$ is generic, we have $\ga_1^{(i)}/r^{(i)}=\ga_1/r$,
hence the number of possible pairs $(r^{(i)},\ga_1^{(i)})$ is finite.
We conclude by the Bogomolov inequality ($\De(\ch F)\ge0$ for a Mumford semistable sheaf $F$) that $\ga_2^{(i)}$ are bounded above and therefore there is a finite number of possible classes $\ga^{(i)}$ appearing in the HN filtrations.

For a fixed $\ta\in\bR$, let $\Ga^*$ be the set of all Chern characters $\ga=(r,\ga_1,\ga_2)$ with $\mu_J(\ga)=\ta$ and $\De(\ga)\ge0$.
Let $\Ga=\Ga^*\cup\set0$.
Then $\ga_1/r$ is independent of $\ga\in\Ga^*$ and therefore $\ga_2/r\le\ga_1^2/2r^2=:\nu_\ta$ is bounded above.
This implies that $\Ga$ is a locally finite monoid \eqref{loc fin}.
Using the formula for the reduced Hilbert polynomial \eqref{reduced Hilb}, we can write for any $\ga\in\Ga$
\eql{
p_J(\ga,n)=p_{J,\ta}(n)+\frac{\ga_2}r,}{eq:p J ta}
where the polynomial $p_{J,\ta}$ is independent of $\ga\in\Ga$.
This implies that for $\ga,\ga'\in\Ga$
$$p_J(\ga)\le p_J(\ga')\quad\iff\quad \ga_2/r\le\ga'_2/r'.$$
For $\nu\in\bR$, let 
$$\Ga^*_\nu=\sets{\ga\in\Ga^*}{p_J(\ga,n)=p_{J,\ta}+\nu},
\qquad \Ga_\nu=\Ga_\nu^*\cup\set0.$$

Uniqueness of the Harder-Narasimhan filtration implies a relation in the motivic Hall algebra (of the category of Mumford semistable sheaves) which translates into a relation in $\hkvar\pt\pser\Ga$ (as well as in $\bQ(y)\pser\Ga$ after taking the Poincar\'e polynomials)
\eq{\label{M vs G}
\sum_{\ga\in\Ga}\cI^\rM_\ga z^\ga
=\prod_{\nu}\rbr{\sum_{\ga\in\Ga_\nu}\cI_\ga z^\ga}.
}
By the definition of DT invariants, we have
\eq{
\sum_{\ga\in\Ga}\cI^\rM_\ga z^\ga
=\Exp\rbr{\frac{\sum_{\ga\in\Ga^*}\Om^\rM_\ga z^\ga}{y\inv-y}},\qquad
\sum_{\ga\in\Ga_\nu}\cI_\ga z^\ga
=\Exp\rbr{\frac{\sum_{\ga\in\Ga^*_\nu}\Om_\ga z^\ga}{y\inv-y}}.
}
Therefore we obtain from \eqref{M vs G}
$$
\Exp\rbr{\frac{\sum_{\ga\in\Ga^*}\Om^\rM_\ga z^\ga}{y\inv-y}}
=\prod_\nu
\Exp\rbr{\frac{\sum_{\ga\in\Ga^*_\nu}\Om_\ga z^\ga}{y\inv-y}}
=\Exp\rbr{\frac{\sum_\nu\sum_{\ga\in\Ga^*_\nu}\Om_\ga z^\ga}{y\inv-y}}
$$
and $\Om^\rM_\ga=\Om_\ga$.
\end{proof}

Let us give a slightly different formulation of the above theorem.
Consider the generating functions in
$\bQ\pser{y,t}$ defined by
\eq{
H_{r,\ga_1}(y,t;J)=H_{r,\ga_1}:=\sum_{\ga_2} \cI^\rM(\ga,y;J)\, t^{r\De(\ga)},
}
and 
\eql{
h_{r,\ga_1}(y,t;J)=h_{r,\ga_1}:=\sum_{\ga_2} \Omb(\ga,y;J)\, t^{r\De(\ga)}.
}{hrc1}


For rational and ruled surfaces, we can write explicit formulas for $H_{r,\ga_1}(y,t;J)$.
The following theorem relates these invariants to $h_{r,\ga_1}(y,t;J)$.

\begin{theorem}
\label{th:Htoh}
Assume that $J$ is generic and $J\cdot K_S<0$.
Then, for every $\ta\in\bR$,
\eq{\sum_{\ga_1\cdot J/r=\ta}h_{r,\ga_1}z_0^rz_1^{\ga_1}=\log\left(
1+\sum_{\ga_1\cdot J/r=\ta}H_{r,\ga_1}z_0^rz_1^{\ga_1}
\right),}
which is equivalent to
\eql{
h_{r,\ga_1}
=\sum_{
\over{\sum(r^{(i)},\ga_1^{(i)})=(r,\ga_1)}
{\mu_J(\ga^{(i)})=\mu_J(\ga)\ \forall i}} \frac{(-1)^{\ell-1}}{\ell}
\prod_{i=1}^\ell H_{r^{(i)},\ga_1^{(i)}}.
}{Htoh}
\end{theorem}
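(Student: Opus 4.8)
The plan is to derive Theorem~\ref{th:Htoh} from Theorem~\ref{Om G and S} together with the definitions \eqref{I to Omb} of $\Omb_\ga$ in terms of $\cI_\ga$ and the analogous Mumford-side definition of $\cI^\rM_\ga$. First I would observe that the second displayed equation \eqref{Htoh} is exactly the statement that $h_{r,\ga_1}$ and $H_{r,\ga_1}$, viewed as elements of $\bQ\pser{y,t}$ packaged into generating functions in the extra variables $z_0,z_1$, are related by the ordinary logarithm; so it suffices to prove the first displayed equation, i.e. that $\sum h_{r,\ga_1}z_0^rz_1^{\ga_1}=\log\bigl(1+\sum H_{r,\ga_1}z_0^rz_1^{\ga_1}\bigr)$, with the sums restricted to a fixed slope $\ga_1\cdot J/r=\ta$.

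The key step is to unpack the definitions. By construction $H_{r,\ga_1}=\sum_{\ga_2}\cI^\rM(\ga,y;J)\,t^{r\De(\ga)}$ and $h_{r,\ga_1}=\sum_{\ga_2}\Omb(\ga,y;J)\,t^{r\De(\ga)}$. Now $r\De(\ga)$ is additive under $\ga=\sum\ga^{(i)}$ only up to the correction term in \eqref{Delta_filtration}; however, when all the $\ga^{(i)}$ have the same slope $\mu_J(\ga^{(i)})=\mu_J(\ga)$ and $J$ is generic, the pairs $(r^{(i)},\ga_1^{(i)})$ are all proportional to $(r,\ga_1)$, so the quantity $\sum_j r^{(i)}\ga_1^{(j)}-r^{(j)}\ga_1^{(i)}$ in \eqref{Delta_filtration} vanishes and we get exact additivity $r\De(\ga)=\sum_i r^{(i)}\De(\ga^{(i)})$. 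This is precisely the observation that makes the variable $t$ (tracking $r\De$) behave multiplicatively on products indexed by same-slope decompositions. Therefore, taking the defining relation for $\cI^\rM_\ga$ in terms of $\Omb^\rM_\ga$ analogous to \eqref{Om to I}, applying Theorem~\ref{Om G and S} to replace $\Omb^\rM_{\ga^{(i)}}$ — well, more precisely, I would start from the Mumford analogue of \eqref{I to Omb}, namely
$$\Omb^\rM_\ga=\sum_{\over{\ga^{(1)}+\dots+\ga^{(\ell)}=\ga}{\mu_J(\ga^{(i)})=\mu_J(\ga)}}\frac{(-1)^{\ell-1}}{\ell}\prod_{i=1}^\ell\cI^\rM(\ga^{(i)},y;J),$$
multiply by $t^{r\De(\ga)}$, use exact additivity of $r\De$ to split $t^{r\De(\ga)}=\prod_i t^{r^{(i)}\De(\ga^{(i)})}$, and sum over $\ga_2$ (equivalently over the $\ga_2^{(i)}$ since $\ga_1^{(i)}$ is determined up to the proportionality constraint and only finitely many decompositions of each fixed $(r,\ga_1)$ occur). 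This converts the $\Omb$-in-terms-of-$\cI$ relation into the claimed $h$-in-terms-of-$H$ relation \eqref{Htoh}, because $\Omb^\rM_\ga=\Omb_\ga$ by Theorem~\ref{Om G and S} and hence $h^\rM_{r,\ga_1}=h_{r,\ga_1}$.

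To make the bookkeeping clean I would instead phrase everything at the level of generating series: introduce formal variables $z=(z_0,z_1)$ with $z^\ga:=z_0^r z_1^{\ga_1}$ for $\ga=(r,\ga_1,\ga_2)$ of slope $\ta$, and set $t^{r\De(\ga)}$ as an extra grading. Then $1+\sum_\ga\cI^\rM_\ga\,t^{r\De(\ga)}z^\ga=1+\sum_{r,\ga_1}H_{r,\ga_1}z^{(r,\ga_1)}$ on the one hand, and on the other hand the Mumford analogue of \eqref{Om to I}/\eqref{I to Omb} says this series equals $\exp$ (ordinary, not plethystic, once one is careful — actually it is the $\ell!$-relation \eqref{Om to I}, so it is an honest exponential of $\sum_\ga\Omb^\rM_\ga t^{r\De(\ga)}z^\ga$ only after noting the sum over $\ell$ with $1/\ell!$ is $\exp$) of $\sum_\ga\Omb^\rM_\ga\,t^{r\De(\ga)}z^\ga=\sum_{r,\ga_1}h_{r,\ga_1}z^{(r,\ga_1)}$. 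Taking $\log$ and invoking $\Omb^\rM=\Omb$ gives the first displayed formula, and Möbius inversion (or just re-reading \eqref{I to Omb}) gives \eqref{Htoh}. The main obstacle is the additivity of $r\De$ under same-slope decompositions: one must verify carefully, using genericity of $J$ and \eqref{Delta_filtration}, that the correction terms drop out, and that the substitution $\ga\mapsto(r,\ga_1,t^{r\De})$ respects the semigroup structure on same-slope classes so that the exponential/logarithm manipulations are legitimate in $\bQ\pser{y,t}\pser{z_0,z_1}$; everything else is a direct transcription of the already-established identities \eqref{I to Omb}, \eqref{Om to I}, and Theorem~\ref{Om G and S}.
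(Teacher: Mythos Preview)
Your proposal is correct and follows essentially the same route as the paper: both reduce to the identity $1+\sum_{\mu_J(\ga)=\ta}\cI^\rM_\ga\,z^\ga=\exp\bigl(\sum_{\mu_J(\ga)=\ta}\Omb_\ga\,z^\ga\bigr)$ after checking that $r\De$ is additive on same-slope decompositions for generic $J$. The only cosmetic differences are that the paper makes the additivity transparent via the substitution $u=z_0z_1^{\ga_1/r}t^{\ga_1^2/2r^2}$ (so that $z_0^rz_1^{\ga_1}t^{r\De(\ga)}=u^rt^{-\ga_2}$ with manifestly additive exponents) rather than invoking \eqref{Delta_filtration}, and it re-derives the needed exponential identity directly from \eqref{M vs G} and \eqref{Om to I} rather than citing Theorem~\ref{Om G and S} and passing from $\Om^\rM=\Om$ to $\Omb^\rM=\Omb$.
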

\begin{proof}
We can write
$$\sum_{\ga_1\cdot J/r=\ta}H_{r,\ga_1}z_0^{r}z_1^{\ga_1}
=\sum_{\mu_J(\ga)=\ta}\cI^\rM_\ga z_0^{r}z_1^{\ga_1}t^{\ga_1^2/2r-\ga_2}
=\sum_{\mu_J(\ga)=\ta}\cI^\rM_\ga u^rt^{-\ga_2},
$$
$$\sum_{\ga_1\cdot J/r=\ta}h_{r,\ga_1}z_0^{r}z_1^{\ga_1}
=\sum_{\mu_J(\ga)=\ta}\Omb_\ga z_0^{r}z_1^{\ga_1}t^{\ga_1^2/2r-\ga_2}
=\sum_{\mu_J(\ga)=\ta}\Omb_\ga u^rt^{-\ga_2},
$$
where $u=z_0z_1^{\ga_1/r}t^{\ga_1^2/2r^2}$ is independent of $\ga$ for fixed $\mu_J(\ga)=\ta$.
Therefore we have to prove
\eql{
1+\sum_{\mu_J(\ga)=\ta}\cI^\rM_\ga u^rt^{-\ga_2}
=\exp\rbr{\sum_{\mu_J(\ga)=\ta}\Omb_\ga u^rt^{-\ga_2}}.
}{eq:u-q}


Given $\nu\in\bR$ and any class \ga with 
\eql{\ga_1\cdot J/r=\ta,\qquad \ga_2/r=\nu,}{ta-nu}
we obtain from \eqref{reduced Hilb} and the assumption that $J$ is generic that
\eql{p_J(\ga,n)=p_{J,\ta}(n)+\nu,}{p Jta}
where $p_{J,\ta}$ is a polynomial that depends only on $J$ and $\ta$.
Moreover, if $\ga$ satisfies \eqref{p Jta}, then it also satisfies \eqref{ta-nu}.
We can write equation \eqref{M vs G} as
$$1+\sum_{\mu_J(\ga)=\ta}\cI^\rM_\ga z^\ga
=\prod_{\nu}
\rbr{1+\sum_{p_J(\ga)=p_{J,\ta}+\nu}\cI_\ga z^\ga}.
$$
where $z^\ga=z_0^rz_1^{\ga_1}z_2^{\ga_2}$.
On the other hand equation \eqref{Om to I} can be written as
$$1+
\sum_{p_J(\ga)=p}\cI_\ga z^\ga
=\exp\rbr{
\sum_{p_J(\ga)=p}\Omb_\ga z^\ga
}$$
for any polynomial $p$.
Combining these two equations, we obtain
$$1+\sum_{\mu_J(\ga)=\ta}\cI^\rM_\ga z^\ga
=\prod_\nu\exp\rbr{
\sum_{p_J(\ga)=p_{J,\ta}+\nu}\Omb_\ga z^\ga}
=\exp\rbr{\sum_{\mu_J(\ga)=\ta}\Omb_\ga z^\ga}.
$$
Using the substitution $u=z_0z_1^{\ga_1/r}$ and $t=z_2\inv$, we obtain \eqref{eq:u-q}.
%
%
\end{proof}

\begin{remark}
Consider a possibly non-generic polarization $J=\pm K_S$.
Then $\ang{\ga,\ga'}=0$ whenever $\mu_J(\ga)=\mu_J(\ga')$ by \eqref{skew-sim}.
By formula \eqref{reduced Hilb}, we can still write the reduced Hilbert polynomial in the form
\eqref{eq:p J ta} $p_J(\ga,n)=p_{J,\ta}(n)+\frac{\ga_2}r,$
where $\ta=\mu_J(\ga)$.
Assuming that $J\cdot K_S<0$, we still obtain the relation \eqref{M vs G} between Gieseker invariants $\cI_\ga$ and Mumford invariants $\cI^\rM_\ga$.
This formula can be translated into a relation between invariants $\hm_{r,\ga_1}$ and $h_{r,\ga_1}$ similar to \eqref{Htoh}.
More precisely, by equation \eqref{De}, we can write $-\ga_2=r\De(\ga)-\ga_1^2/2r$ and consider the series
\eq{t^{-\ga_1^2/2r}H_{r,\ga_1}(y,t;J)
=\sum_{\ga_2} \cI^\rM(\ga,y;J)\, t^{-\ga_2}
}
which behaves better than $H_{r,\ga_1}$ as the second Chern class respects short exact sequences.
We have an analogue of \eqref{Htoh}
\eq{\label{hnongen}
t^{-\ga_1^2/2r}h_{r,\ga_1}
=\sum_{
\over{\sum(r^{(i)},\ga_1^{(i)})=(r,\ga_1)}
{\mu_J(\ga^{(i)})=\mu_J(\ga)\ \forall i}} \frac{(-1)^{\ell-1}}{\ell}
\prod_{i=1}^\ell t^{-(\ga_1^{(i)})^2/2r^{(i)}}H_{r^{(i)},\ga_1^{(i)}}.
}
\end{remark}

\section{Explicit results for ruled surfaces}
\label{explicitresults}
In this section, we determine in a number of different cases the motivic DT-invariants giving the dimensions of intersection cohomology groups in cases where $M_\gamma$ is singular. For the projective plane, these invariants were computed earlier by G\"ottsche \cite{Gottsche1990} for $r=1$, Yoshioka \cite{Yoshioka1995} for $r=2$, and the first author \cite{Manschot:2011ym, manschot_sheaves} for $r\geq 3$. This section gives explicit results for $\dim \IH^n(M_\ga)$ for the
ruled surfaces $\pi:\Sigma_{g,d} \to C$.

\subsection{Wall-crossing and suitable polarization}
Let $\pi:\Sigma_{g,d} \to C$ be a ruled surface with fiber $f\simeq \bP^1$ over a smooth projective curve $C$ of genus $g$. Here $\Si_{g,d}=\bP(L\oplus O_C)$ with a line bundle $L$ of degree $d\ge0$.
The intersection numbers on $\Sigma_{g,d}$ are 
$$C^2=-d\leq 
0,\qquad C\cdot f=1,\qquad f^2=0,$$
and its canonical class is
$K_{\Sigma_{g,d}}=-2C+(2g-2-d) f$.
We parametrize a polarization $J$ by
$J_{m,n}=m(C+ d f)+nf$ with
$$m=f\cdot J_{m,n}\in\bQ_{\ge0},\qquad 
n=C\cdot J_{m,n}\in\bQ_{\ge0}.$$
Then $\lambda J_{m,n}$ for appropriate $\lambda\in\bZ$ is the first Chern class of a nef line bundle. Note that the requirement $J\cdot K_S<0$ implies the further constraint
$\frac nm\ge g-1-\frac d2$.

We recall the generating functions $\hm_{r,\ga_1}(y,t;J_{0,1})$ for
the boundary polarization $J_{0,1}=f$.

\begin{proposition}[\cf {\cite{Manschot:2011ym, mozgovoy_invariants}}]
\label{propH}
For the ``boundary'' polarization $J_{0,1}$,
$\hm_{r,\ga_1}(y,t;J_{0,1})$ are given for all $r\geq 1$ by  
\be 
\hm_{r,\ga_1}(y,t;J_{0,1})
=\begin{cases}
\hm_{r}(y,t),&\text{if }\ga_1\cdot f=0 \pmod r,\\
0,&\text{if }\ga_1\cdot f\neq 0 \pmod r.
\end{cases}
\ee
with 
\be
\label{HMf}
\begin{split}
\hm_{r}(y,t)=\hm_r:=& (-y)^{-r^2(1-g)} \prod_{n=1}^\infty \frac{(1-y^{-2r+1}t^{n-1})^{2g}(1-y^{2r-1}t^n)^{2g}}{(1-y^{-2r}t^{n-1})\,(1-y^{2r}t^{n})(1-t^n)^2}\\
& \times \prod_{k=1}^{r-1} \frac{(1-y^{-2k+1}t^{n-1})^{2g}(1-y^{2k-1}t^n)^{2g}}{(1-y^{-2k} t^{n-1} )^2\,(1-y^{2k} t^{n} )^2}.
\end{split}
\ee
\end{proposition}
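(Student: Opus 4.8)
By definition $\hm_{r,\ga_1}(y,t;J_{0,1})=\sum_{\ga_2}\cI^\rM(\ga,y;J_{0,1})\,t^{r\De(\ga)}$ with $\cI^\rM_\ga=(-y)^{\hi(\ga,\ga)}P(\ms\ga^\rM)$, so the statement is a computation of the virtual Poincar\'e series of the stacks $\ms\ga^\rM$ of Mumford (i.e.\ $\mu_f$-) semistable sheaves on $\Sigma_{g,d}$. Since $f\cdot K_{\Sigma_{g,d}}=-2<0$, Lemma~\ref{lm:hereditary} applies: the category of $\mu_f$-semistable sheaves of a fixed slope is hereditary, the stacks $\ms\ga^\rM$ are smooth, and $\hi(\ga,\ga)=-\dim\ms\ga^\rM$. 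Following \cite{mozgovoy_invariants,Manschot:2011ym}, the plan has three steps: a vanishing/periodicity reduction, a description of $\mu_f$-semistable sheaves with $\ga_1\cdot f=0$ in terms of the base curve $C$, and the assembly of the infinite product.

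First I would prove the dichotomy. If $E$ is $\mu_f$-semistable of rank $r$, then for a general fiber $\ell\simeq\bP^1$ the restriction $E|_\ell$ is a semistable bundle on $\bP^1$, hence of the form $\cO_\ell(k)^{\oplus r}$; since $\deg(E|_\ell)=\ga_1\cdot f$ this forces $r\mid\ga_1\cdot f$. Therefore $\ms\ga^\rM=\varnothing$, $\cI^\rM_\ga=0$, and $\hm_{r,\ga_1}=0$ whenever $\ga_1\cdot f\not\equiv0\pmod r$. When $\ga_1\cdot f\equiv0\pmod r$, twisting by $\cO_{\Sigma_{g,d}}(C)$ sends $\ga_1\mapsto\ga_1+rC$, hence $\ga_1\cdot f\mapsto\ga_1\cdot f+r$, while preserving $\mu_f$-semistability and leaving $\De(\ga)$ and $\hi(\ga,\ga)$ unchanged; it induces isomorphisms of the stacks $\ms\ga^\rM$ respecting the $t^{r\De}$-grading. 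Hence $\hm_{r,\ga_1}$ depends only on $r$, and it suffices to treat $\ga_1\cdot f=0$ (one may even take $\ga_1=0$), giving $\hm_r(y,t)$.

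The core is the computation of $\hm_r$. A sheaf $E$ with $\ga_1\cdot f=0$ restricts to $\cO_\ell^{\oplus r}$ on a general fiber, so it is built from a rank $r$ bundle on $C$ --- essentially $\pi_*$ of its reflexive hull, up to Hecke modifications along finitely many fibers --- together with ``vertical'' data (fiber-supported modifications and zero-dimensional quotients) which accounts for the entire $\ga_2$-dependence. Stratifying $\ms\ga^\rM$ accordingly, or equivalently writing the corresponding Hall-algebra identity and applying the integration map, one obtains a factorization of $\hm_r(y,t)$ into a factor coming from $\Bun_r(C)$ and a factor coming from the vertical sheaves. The first is evaluated by the Harder--Narasimhan/Atiyah--Bott formula for the motive of $\Bun_r(C)$: it supplies the $y$-dependent zeta-function factors --- for $k=1,\dots,r$, numerators $(1-y^{\mp(2k-1)}t^{\bullet})^{2g}$ with exponent $2g=\dim H^1(C)$ over denominators $(1-y^{\mp 2k}t^{\bullet})$, with multiplicity $2$ for $k<r$ and $1$ for $k=r$ exactly as in that formula --- together with the prefactor $(-y)^{-r^2(1-g)}$ reflecting $\hi(O_{\Sigma_{g,d}})=1-g$ in the normalization $\cI^\rM_\ga=(-y)^{\hi(\ga,\ga)}P$ and $\dim\Bun_r(C)=r^2(g-1)$. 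The second factor is a G\"ottsche-type generating series \cite{Gottsche1990} encoding the fiberwise and zero-dimensional modifications, and it supplies the remaining $(1-t^n)^{-2}$ and the extra $(1-y^{\mp 2r}t^{\bullet})^{-1}$. Collecting everything and re-expanding in $t$ gives \eqref{HMf}.

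The main obstacle is the second step: making precise the reduction of $\ms\ga^\rM$ (with $\ga_1\cdot f=0$) to the data (bundle on $C$, vertical part) --- in particular handling the everywhere strictly semistable locus, which forces one to work throughout with stacks rather than moduli spaces since $f$ is a boundary (non-generic, merely nef) polarization, as well as the non-locally-free sheaves --- and then carrying out the somewhat delicate weight and sign bookkeeping needed to match the motivic products with \eqref{HMf}. These are precisely the computations performed in \cite{mozgovoy_invariants} and \cite{Manschot:2011ym}, to which I would refer for the details.
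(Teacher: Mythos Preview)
Your proposal is correct and ultimately rests on the same input as the paper's proof, namely the computation of the stacky invariants for $J_{0,1}=f$ carried out in \cite{mozgovoy_invariants,Manschot:2011ym}. The difference is one of packaging. The paper's proof is purely algebraic: it quotes from \cite[Corollary~5.2]{mozgovoy_invariants} the closed formula
\[
\sum_{\ga_2}P(\ms\ga^\rM)\,t^{r\De}=P(\Bun_{C,r})\prod_{k\ge1}\prod_{i=-r}^{r-1}Z_C(y^{2rk+2i}t^k)
\]
(valid for $r\mid\ga_1\cdot f$, zero otherwise), inserts the normalizing factor $(-y)^{\hi(\ga,\ga)}$ with $\hi(\ga,\ga)=r^2(1-g)-2r^2\De(\ga)$, and then simply expands $P(\Bun_{C,r})$ and $Z_C$ to rewrite the product as \eqref{HMf}. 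You instead sketch the \emph{geometry} behind that cited formula: the restriction-to-fiber argument for the dichotomy, the twist reduction, and the factorization of $\ms\ga^\rM$ into $\Bun_r(C)$ times a ``vertical'' G\"ottsche-type piece. Your route is more informative but longer and leaves more to be checked; the paper's route is cleaner once the reference is in hand, since all the geometric work is already encapsulated in the quoted identity. One small imprecision: twisting by line bundles only shows that $\hm_{r,\ga_1}$ depends on $\ga_1\bmod rH^2(S,\bZ)$, not that it depends only on $\ga_1\cdot f\bmod r$; the full independence (within the class $r\mid\ga_1\cdot f$) is part of the statement of \cite[Corollary~5.2]{mozgovoy_invariants} rather than a consequence of twisting alone.
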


\begin{proof}
By \cite[Corollary 5.2]{mozgovoy_invariants}, if $r\mid \ga_1\cdot f$, then 
$$
\widetilde Z_f(r,\ga_1):=
\sum_{\ga_2} P(\mathfrak{M}_\gamma)\,t^{r\Delta}
=P(\Bun_{C,r})\prod_{k\geq 1}\prod_{i=-r}^{r-1} Z_C(y^{2rk+2i}t^k),
$$
where $\Bun_{C,r}$ is the stack of vector bundles of rank $r$ and degree zero over $C$ that has a Poincar\'e polynomial
\eq{\label{Bun}
P(\Bun_{C,r})=\frac{(1-y)^{2g}}{y^2-1}\prod_{i=1}^{r-1} Z_C(y^{2i}) 
}
and $Z_C(t)$ is the (Poincar\'e polynomial of the) zeta function of the curve $C$ which has the explicit form
\be
\label{zetaC}
Z_C(t)=\frac{(1-yt)^{2g}}{(1-t)(1-y^2t)}.
\ee
The function of our interest is 
\be
\hm_r(y,t)=\sum_{\gamma_2} (-y)^{\chi(\gamma,\gamma)}P(\mathfrak{M}_\gamma) t^{r\Delta}.
\ee
Using that $\chi(\gamma,\gamma)=r^2(-2\Delta(\gamma)+1-g)$, we obtain
$$
\hm_r(y,t)= (-y)^{r^2(1-g)} P(\Bun_{C,r})\prod_{k\geq 1}\prod_{i=-r}^{r-1} Z_C(y^{2i}t^k).
$$
After substitution of \eqref{Bun} and \eqref{zetaC}, this expression is easily rewritten to Eq.~(\ref{HMf}).
\end{proof}

\begin{remark}
\noindent \begin{enumerate}
\item For $r=1$, (\ref{HMf}) agrees with G\"ottsche \cite[Theorem 0.1]{Gottsche1990}, and for $r=2$ with Yoshioka \cite[Theorem 0.1]{Yoshioka1995}. 
\item The generalization of Proposition \ref{propH} to generating
  functions of $E(\ms\ga)$ can be found in \cite[Conjecture
  4.3]{Manschot:2011ym}, where it is also shown that $t^{-\frac{r\chi(S)}{24}}\hm_r(y,t)$ may be written
  in terms of Dedekind eta and Jacobi theta functions. 
\item Haghighat \cite{Haghighat:2015coa}
 provides a string theoretic explanation of the $H_r$ for the surfaces $\Sigma_{1,d}$. 
\end{enumerate}
\end{remark}

To determine $\hm_{r,\ga_1}(y,t;J_{\eps,1})$, we need to subtract from $\hm_{r,\ga_1}(y,t;J_{0,1})$ the contributions due to sheaves with HN-filtrations of length $>1$ for $J_{\eps,1}$. A useful tool for this is the wall-crossing formula of Joyce for $\cI^\rM(\gamma)$ \cite{Joyce:2004tk}, which we now recall. We will state this formula for $\cI^\rM$, although it is more generally applicable.

\begin{definition}
Let $(\ga^{(i)})=(\ga^{(1)}, \ga^{(2)},\dots, \ga^{(\ell)})$ be a tuple of Chern characters with
$\ga^{(i)}=(r^{(i)},\ga_1^{(i)},\ga_2^{(i)})$
and $r^{(i)}\in \bN_{>0}\ \forall i$. We define $S((\ga^{(i)}), J, {J'} )$ as follows. If for all $i=1,\dots,\ell-1$, we have either
\begin{enumerate}
\item[(a)] $\mu_{J}(\ga^{(i)})\leq \mu_{J}(\ga^{(i+1)})$ and $\mu_{J'}(\sum_{j=1}^i\ga^{(j)}) > \mu_{J'}(\sum_{j=i+1}^\ell\ga^{(j)})$, or
\item[(b)] $\mu_{J}(\ga^{(i)})> \mu_{J}(\ga^{(i+1)})$ and $\mu_{J'}(\sum_{j=1}^i\ga^{(j)}) \leq \mu_{J'}(\sum_{j=i+1}^\ell \ga^{(j)})$,
\end{enumerate}
then $S((\ga^{(i)}),J,{J'} )=(-1)^k$ where $k$ is the number of $i=1,\dots,\ell-1$ such that ({a}) is correct. Otherwise, $S((\ga^{(i)}),J,{J'} )=0$.
\end{definition}

Then we have the following theorem of Joyce. 
\begin{theorem}[{\cite[Theorem 6.21]{Joyce:2004tk}}]
\label{JoyceWC}
Under a change of polarization $J \to {J'}$ the invariants $\cI^\rM(\gamma,y;{J'})$ are expressed in terms of $\cI^\rM(\gamma,y;{J})$ by
$$
\cI^{\rM}(\gamma,y;{J'})
=\sum_{\over{\sum_{i=1}^\ell \gamma^{(i)}=\gamma,}{r^{(i)}\geq 1\,\forall i}}
S\rbr{(\ga^{(i)}),J,J'}\,(-y)^{-\sum_{j<i}(r^{(j)}\ga_1^{(i)}-r^{(i)}\ga_1^{(j)})\cdot K_{S}} \prod_{i=1}^\ell \cI^{\rM}(\gamma^{(i)},y;{J}).
$$
\end{theorem}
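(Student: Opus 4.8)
The statement is Joyce's; the plan is to recover it by lifting the wall-crossing to the motivic Hall algebra, as in \S\ref{result}, and then realising it numerically. Fix a total class $\ga$. By the Bogomolov inequality (a semistable sheaf $F$ satisfies $\De(F)\ge 0$) together with the boundedness of families of semistable sheaves \cite[Theorem 3.3.7]{huybrechts_geometry}, only finitely many decompositions $\ga=\sum_i\ga^{(i)}$ with every $\ga^{(i)}$ semistable for some polarization and of bounded discriminant can occur, so all of the products and sums below converge in the completed Hall algebra $\what H$. For a polarization $J$ write $\one^{J}_\tau\in\what H$ for the characteristic element of $\mu_J$-semistable sheaves of slope $\tau$, and $\one=\sum_\ga[\ms\ga\to\ms\ga]$ for the characteristic element of all sheaves. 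Uniqueness of the Harder--Narasimhan filtration with respect to $\mu_J$ yields the factorisation
$$\one=\prod_{\tau}^{\cwarrow}\one^{J}_\tau$$
with the product taken over decreasing $\tau$, and likewise $\one=\prod_{\tau}^{\cwarrow}\one^{J'}_\tau$.

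The core step is to equate these two factorisations and re-sort, solving for the $J'$-semistable elements in terms of the $J$-semistable ones. The identity to be proved is
$$\one^{J'}_{\tau'}\big|_\ga=\sum_{\substack{\ga^{(1)}+\cdots+\ga^{(\ell)}=\ga\\ \mu_{J'}(\ga^{(i)})=\mu_{J'}(\ga)\ \forall i}}S\rbr{(\ga^{(i)}),J,J'}\,\one^{J}_{\mu_J(\ga^{(1)})}\big|_{\ga^{(1)}}*\cdots*\one^{J}_{\mu_J(\ga^{(\ell)})}\big|_{\ga^{(\ell)}}$$
in $\what H$. I would prove it by induction on $\ell$: interleaving the HN filtration with respect to $\mu_{J'}$ (whose subquotients are $\mu_J$-semistable) against the HN filtration with respect to $\mu_J$, and carrying out the resulting inclusion--exclusion over the filtrations compatible with both stabilities, the signs collapse to $S((\ga^{(i)}),J,J')$ precisely because this coefficient satisfies the matching recursion. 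Nothing in this step is specific to surfaces, and it requires no homological hypothesis.

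Finally I would apply the numerical realisation (motivic Hall algebra $\to$ Poincar\'e-polynomial ring), under which $\one^{J}_\tau\big|_\ga\mapsto\cI^{\rM}(\ga,y;J)\,z^\ga$ while the Hall product $*$ becomes the commutative product twisted by the motivic Euler form. The symmetric part of that form is absorbed by the $(-\bL^\oh)^{\hi(\ga,\ga)}$--normalisation built into $\cI^{\rM}$, and by \eqref{skew-sim}, which gives $\ang{\ga^{(j)},\ga^{(i)}}=-(r^{(j)}\ga_1^{(i)}-r^{(i)}\ga_1^{(j)})\cdot K_S$, together with $P(\bL^\oh)=-y$, the residual antisymmetric part of the twist on the ordered product $\cI^{\rM}(\ga^{(1)},y;J)\cdots\cI^{\rM}(\ga^{(\ell)},y;J)$ is exactly $(-y)^{-\sum_{j<i}(r^{(j)}\ga_1^{(i)}-r^{(i)}\ga_1^{(j)})\cdot K_S}$; here the standing hypothesis $J\cdot K_S<0$ guarantees, via Lemma \ref{lm:hereditary}, that the relevant categories are hereditary, keeping the realisation compatible with the integration map of \S\ref{result}. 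Assembling these factors gives the stated formula.

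I expect the main obstacle to be the combinatorial identity for $S$ in the second step. Showing that re-sorting the two HN filtrations produces precisely these signs and no spurious terms is the technical heart of Joyce's argument; it is handled most cleanly by deriving a recursion for $S((\ga^{(i)}),J,J')$ --- say in the position of the first ``descent'' in the pair of orderings --- and matching it against the recursion obtained by splitting off the maximal $\mu_{J'}$-destabilising subobject. Everything else --- convergence in $\what H$, the two HN factorisations, and the Euler-form bookkeeping --- reduces to the facts quoted above, with the only surface-specific inputs being $J\cdot K_S<0$ and the pairing \eqref{skew-sim}.
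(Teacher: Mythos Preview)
The paper does not prove this theorem: it is quoted verbatim as Joyce's result \cite[Theorem 6.21]{Joyce:2004tk} and used as a black box in the computations of \S\ref{explicitresults}. So there is no ``paper's own proof'' to compare against; your proposal is a sketch of Joyce's argument itself, not of anything the authors supply.

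Your outline is essentially correct and is indeed how Joyce's proof runs: the identity $\one=\prod_\tau^{\cwarrow}\one^J_\tau=\prod_\tau^{\cwarrow}\one^{J'}_\tau$ in the Hall algebra, the combinatorial re-sorting that produces the coefficients $S((\ga^{(i)}),J,J')$, and then passage to numerical invariants via an integration map whose twist is governed by the Euler form \eqref{skew-sim}. One point to tighten: you invoke Lemma \ref{lm:hereditary} to justify that the integration map is an algebra homomorphism, but that lemma only gives $\Ext^2(E,F)=0$ for $E,F$ semistable of \emph{equal} slope, whereas the wall-crossing products involve $J$-semistable factors of \emph{different} $\mu_J$-slopes. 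The vanishing you actually need is $\Ext^2(E_j,E_i)=0$ for $j<i$ in the ordered product, i.e.\ $\Hom(E_i,E_j\ts K_S)=0$; since $\mu_J(E_j\ts K_S)=\mu_J(E_j)+K_S\cdot J<\mu_J(E_j)$, this follows when $\mu_J(E_i)\ge\mu_J(E_j)$ but not automatically otherwise. Joyce's own framework sidesteps this by working with stack functions and a virtual integration map that does not require global hereditariness; if you want to stay within the paper's setup you should either restrict to wall-crossings where the relevant $\Ext^2$'s vanish by the slope argument above, or cite Joyce's more general machinery directly rather than the integration map of \S\ref{result}.
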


Our first aim is to determine a generating function for the invariants
$\cI^{\rM}(\gamma,y;{J})$ with the polarization $J$ sufficiently close
to $J_{0,1}=f$, such that no walls exist between $J$ and
$f$. Such a polarization clearly depends on $\ga$ which is made
precise in the following definition and proposition following
\cite[Remark 5.3.6]{huybrechts_geometry}. 
\begin{definition}
\label{cond_xi}
A $\gamma$-suitable polarization $J$ is a polarization
such that for any $\xi\in \mathrm{Pic}(S)$ satisfying the following two conditions:
\begin{enumerate}
\item $\xi^2$ is bounded by
\be
\label{xi2bound}
-\frac{r^4}{2} \Delta(\ga)\leq  \xi^2<0,
\ee
\item 
either $\xi\cdot f=0$ or $(\xi\cdot f)(\xi\cdot J)>0$.
\end{enumerate}
\end{definition}

\begin{proposition}
No walls exist between $f$ and a $\gamma$-suitable polarization $J$.
\end{proposition}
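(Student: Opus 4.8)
Recall that, following \cite[\S4.C, Remark~5.3.6]{huybrechts_geometry}, a wall of type $\ga=(r,\ga_1,\ga_2)$ in the ample cone of $S$ is a hyperplane $W^\xi=\{H\text{ ample}:\xi\cdot H=0\}$ determined by a class $\xi\in\mathrm{NS}(S)$ satisfying precisely condition~(1) of Definition~\ref{cond_xi}, i.e.\ $-\frac{r^4}{2}\Delta(\ga)\le\xi^2<0$: the upper bound guarantees $W^\xi\neq\varnothing$ by the Hodge index theorem, and the lower one follows from \eqref{Delta_filtration} together with the Bogomolov inequality applied to a destabilizing subsheaf. The plan is therefore to show that a $\ga$-suitable $J$ lies, together with $f$, on one side of every such $W^\xi$.

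The only genuine input is an elementary computation on $\Sigma_{g,d}$. Since $\mathrm{NS}(\Sigma_{g,d})\otimes\bQ=\bQ C\oplus\bQ f$, write a numerical class as $\xi\equiv aC+bf$; then $\xi\cdot f=a$ and $\xi^2=-a^2d+2ab=a(2b-ad)$, so $\xi^2<0$ forces $a\neq0$, that is $\xi\cdot f\neq0$. Hence, for any $\xi$ satisfying condition~(1), the alternative ``$\xi\cdot f=0$'' in condition~(2) of Definition~\ref{cond_xi} cannot occur, and a $\ga$-suitable polarization $J$ is exactly one for which $(\xi\cdot f)(\xi\cdot J)>0$ for every $\xi$ with $-\frac{r^4}{2}\Delta(\ga)\le\xi^2<0$. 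In particular $\xi\cdot J\neq0$ for each such $\xi$, so $J$ itself lies on no wall of type $\ga$.

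It remains to exclude a wall between $f$ and $J$. As the ample cone is convex and $J$ is interior to it, the segment $J_s=(1-s)f+sJ$ is ample for $s\in(0,1]$; moreover $f$ lies on no wall of type $\ga$ either, since $\xi\cdot f=a\neq0$ for every $\xi$ satisfying~(1). So it suffices to check that no $W^\xi$ of type $\ga$ meets $\{J_s:s\in[0,1]\}$. But for such an $\xi$ the number $\xi\cdot J_s=(1-s)(\xi\cdot f)+s(\xi\cdot J)$ is a convex combination of two quantities of the same strict sign, hence nonzero for all $s\in[0,1]$. Therefore no wall of type $\ga$ separates $f$ from $J$, as claimed.

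The step needing the most care is the bookkeeping in the first paragraph: identifying ``wall of type $\ga$'' here with the set of $W^\xi$ for $\xi$ obeying condition~(1) of Definition~\ref{cond_xi}, via \cite[\S4.C]{huybrechts_geometry}---the Hodge index theorem together with \eqref{Delta_filtration} and the Bogomolov inequality. Granting this, the remaining ingredients are the numerical identity $\xi^2=a(2b-ad)$ on $\Sigma_{g,d}$ and the convexity of the ample cone, both routine.
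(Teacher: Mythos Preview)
Your argument is correct and follows essentially the same route as the paper. Both proofs identify the potential wall classes as those $\xi$ satisfying condition~(1) of Definition~\ref{cond_xi} (via \eqref{Delta_filtration} and the Bogomolov inequality) and then check that condition~(2) rules out any sign change of $\xi\cdot(-)$ between $f$ and $J$. The only cosmetic difference is in how the implication ``$\xi^2<0\Rightarrow\xi\cdot f\neq0$'' (equivalently, ``opposite signs of $\xi\cdot f$ and $\xi\cdot J\Rightarrow\xi^2<0$'') is established: you compute $\xi^2=a(2b-ad)$ directly in the basis $C,f$ of $\mathrm{NS}(\Sigma_{g,d})$, whereas the paper invokes the signature $(1,1)$ of the intersection form (Hodge index). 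Your convexity argument on the segment $J_s$ is just the contrapositive of the paper's ``different signs'' formulation.
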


\begin{proof}
From Equations (\ref{mustab}) and (\ref{Gstab}) we deduce that a wall for $\gamma$ exists between $f$ and $J$ if there exist
$\ga^{(1)}$ and $\ga^{(2)}$ such that $(r^{(1)}\ga_1^{(2)}-r^{(2)}\ga_1^{(1)})\cdot f$ and
$(r^{(1)}\ga_1^{(2)}-r^{(2)}\ga_1^{(1)})\cdot J$ have
a different sign. We set $r^{(1)}\ga_1^{(2)}-r^{(2)}\ga_1^{(1)}=\xi\in
H^2(S,\mathbb{Z})$. Then from (\ref{Delta_filtration}) we find that
\be
r\Delta(\ga)=\sum_{i=1,2} r^{(i)}\Delta^{(i)} -\frac{1}{2r^{(1)}r^{(2)}r} \xi^2.
\ee
By the Bogomolov inequality $r\Delta \geq 0$, and therefore
we arrive at
$$
-2r^2r^{(1)}r^{(2)}\Delta(\ga)\leq \xi^2.
$$
The left hand side is minimized by $r^{(1)}=r^{(2)}=r/2$. Moreover, $\xi$ is
negative definite, $\xi^2<0$, since $\xi\cdot f$ and $\xi\cdot J$ have
a different sign and the signature of $H^2(S,\mathbb{Z})$ is
$(1,b_2(S)-1)=(1,1)$. Thus, the $\xi$ satisfy Condition (1) in
Definition \ref{cond_xi}. However, the different sign violates
Condition (2) and therefore no walls exist between $f$ and a suitable
polarization $J$.
\end{proof}

The next proposition gives a closed
expression for the generating function for invariants \,
$\cI^{\rM}(\ga,y;J)$ for a suitable polarization
$J=J_{\varepsilon,1}$. Since the generating function sums over all
$\ga_2$, the choice of suitable polarization $J_{\varepsilon,1}$ is determined as follows. Truncate $\hm_{r,\ga_1}(y,t,J_{\varepsilon,1})$ at
some power $t^K$, with $K$ the largest value of $r\Delta$ of interest, and denote the corresponding \ga by $\gamma_{\rm max}$. Then
$J_{\varepsilon,1}$ is chosen such that it is $\gamma_{\rm
  max}$-suitable, which implies by Equation (\ref{xi2bound}) that $J_{\varepsilon,1}$ is $\gamma$-suitable
for the terms of $\hm_{r,\ga_1}$ with $r\Delta(\ga)<K$. 

We have the following proposition
\begin{proposition}
\label{HMresummed}
Assume $|y|< 1$ and $\ga_1=\beta C-\alpha f$, then $\hm_{r,\ga_1}(J_{\varepsilon,1})$ equals
\begin{multline}
\hm_{r,\ga_1}(J_{\varepsilon,1})=\\
\begin{cases}
\sum_{r^{(1)}+\dots+r^{(\ell)}=r} \frac{y^{2\sum_{i=2}^\ell (r^{(i)}+r^{(i-1)})\{\frac{\alpha}{r}\sum_{k=i}^\ell r^{(k)}\}}}{\prod_{i=2}^\ell\left(1-y^{2(r^{(i)}+r^{(i-1)})}\right)} \prod_{i=1}^\ell \hm_{r^{(i)}},
&\text{if }\ga_1\cdot f=0 \pmod r,\\ 
0,&\text{if }\ga_1\cdot f\neq 0 \pmod r,
\end{cases}
\end{multline}
where $\{ x \}=x-\lfloor x \rfloor$ is the fractional part of $x$.
\end{proposition}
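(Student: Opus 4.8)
\emph{Proof plan.}
The plan is to run a wall-crossing argument from the boundary polarization $J_{0,1}=f$, where $\hm_{r,\ga_1}$ is known by Proposition~\ref{propH}, to the $\gamma$-suitable polarization $J_{\eps,1}$, using Joyce's formula (Theorem~\ref{JoyceWC}) together with the fact, established above, that no walls separate $f$ from a $\gamma$-suitable polarization. Since $\hm_{r,\ga_1}(y,t;J)=\sum_{\ga_2}\cI^\rM(\ga,y;J)\,t^{r\Delta(\ga)}$, what is to be computed is an $\cI^\rM$-generating function, so Theorem~\ref{JoyceWC} applies directly: taking $J=f$, $J'=J_{\eps,1}$ writes $\cI^\rM(\ga;J_{\eps,1})$ as a sum over decompositions $\ga=\sum_{i=1}^\ell\ga^{(i)}$ with $r^{(i)}\ge1$ of $S((\ga^{(i)}),f,J_{\eps,1})\,(-y)^{-\sum_{j<i}(r^{(j)}\ga_1^{(i)}-r^{(i)}\ga_1^{(j)})\cdot K_S}\prod_i\cI^\rM(\ga^{(i)};f)$.

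The structural core is to show that, for $\eps$ small enough (as $\gamma_{\rm max}$-suitability guarantees), only decompositions with $\mu_f(\ga^{(i)})=\mu_f(\ga)$ for every $i$ contribute. Indeed, $\gamma$-suitability means there are no walls on the segment from $f$ to $J_{\eps,1}$, so the only jump of semistability occurs ``at the boundary'', where $f$-semistable sheaves of a common $\mu_f$-slope get reshuffled; concretely, since $\mu_{J_{\eps,1}}\to\mu_f$ as $\eps\to0$, a case analysis of the definition of $S((\ga^{(i)}),f,J_{\eps,1})$ shows it vanishes unless all $\mu_f(\ga^{(i)})$ agree, and that on the surviving tuples $S=(-1)^{\ell-1}$, with the partial sums $\sum_{j\le i}\ga^{(j)}$ constrained by the inequalities obtained by comparing $\eps$-derivatives of the $J_{\eps,1}$-slopes. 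Together with the vanishing of $\cI^\rM(\ga^{(i)};f)$ unless $r^{(i)}\mid\ga_1^{(i)}\cdot f$ (Proposition~\ref{propH}), this forces $r\mid\ga_1\cdot f$, which settles the case $\ga_1\cdot f\ne0\pmod r$; otherwise $\beta_0:=\mu_f(\ga)\in\Z$, one writes $\ga_1^{(i)}=r^{(i)}\beta_0C-\alpha^{(i)}f$ with $\sum_i\alpha^{(i)}=\alpha$, and the $S$-constraints become explicit inequalities on the partial sums of the $\alpha^{(i)}$.

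Next I would decouple the $t$-variable and resum over the $\alpha^{(i)}$. Equal $\mu_f$-slopes make each $r^{(i)}\ga_1^{(j)}-r^{(j)}\ga_1^{(i)}$ a multiple of $f$, which is isotropic on $\Si_{g,d}$; by \eqref{Delta_filtration} this makes $r\Delta$ additive over the pieces, and $\ga_2$ is additive, so summing over the $\ga_2^{(i)}$ converts $\prod_i\cI^\rM(\ga^{(i)};f)$ into $\prod_i\hm_{r^{(i)}}$ by Proposition~\ref{propH} (which also records that $\hm_{r^{(i)},\ga_1^{(i)}}(f)$ is independent of $\ga_1^{(i)}$, hence of $\alpha^{(i)}$). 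Using $f\cdot K_S=-2$, the twist $(-y)^{-\sum_{j<i}(r^{(j)}\ga_1^{(i)}-r^{(i)}\ga_1^{(j)})\cdot K_S}$ becomes a product of powers of $(-y)$ linear in the partial sums $\sum_{j\le i}\alpha^{(j)}$ with coefficients $\pm2\bigl(r^{(i)}+r^{(i+1)}\bigr)$, times a factor depending only on $\alpha$, $r$ and $r^{(\ell)}$; summing the resulting geometric series over the half-lines cut out by the $S$-inequalities produces the denominators $\prod_i\bigl(1-y^{2(r^{(i)}+r^{(i-1)})}\bigr)$, and the integer endpoints of those half-lines produce the fractional parts $\{\tfrac{\alpha}{r}\sum_{k=i}^\ell r^{(k)}\}$ in the numerator. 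The hypothesis $|y|<1$ is exactly what makes these series converge.

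I expect the main obstacle to be this last resummation: keeping track of which (strict) inequalities $S$ imposes so that they collapse to the correct ceiling/floor --- hence fractional-part --- expressions, fixing the orientation of the geometric series so that convergence matches $|y|<1$, and checking term by term that the product of summed series is the stated closed form. A subsidiary point, to be handled at the outset, is to license the use of Theorem~\ref{JoyceWC} with $J$ on the boundary of the ample cone; equivalently one works in the motivic Hall algebra of $\mu_f$-semistable sheaves of slope $\mu_f(\ga)$ and inverts the Harder--Narasimhan identity attached to $J_{\eps,1}$, the twist in the integration map absorbing the non-genericity of $f$.
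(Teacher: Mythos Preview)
Your plan is correct and matches the paper's own proof essentially step for step: apply Theorem~\ref{JoyceWC} with $J=J_{0,1}$, $J'=J_{\eps,1}$; use suitability to force all $b^{(i)}/r^{(i)}=\beta/r$ (hence $S=(-1)^{\ell-1}$ and the vanishing for $\ga_1\cdot f\not\equiv0$); note that $r\Delta$ is additive on the surviving decompositions so the $t$-sums factor into $\prod_i H_{r^{(i)}}$; and then resum geometric series in the partial sums of the $\alpha^{(i)}$ under $|y|<1$. The paper carries out exactly this resummation via the change of variables $s^{(i)}=\sum_{j\ge i}a^{(j)}$ (the tail partial sums rather than your initial partial sums, an inessential difference), obtaining the fractional-part exponents from the integer endpoints $1+\lfloor\tfrac{\alpha}{r}\sum_{k\ge i}r^{(k)}\rfloor$ and the stated denominators.
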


\begin{proof} The proof follows the proof of \cite[Proposition 4.1]{manschot_sheaves}. We substitute the wall-crossing formula of Theorem \ref{JoyceWC} with $J=J_{0,1}$ and $J'=J_{\varepsilon,1}$ in $\hm_{r,\ga_1}(J_{\varepsilon,1})$,
\begin{multline}
\label{HsubWC}
\hm_{r,\ga_1}(J_{\varepsilon,1})\\
=\sum_{\ga_2}
\sum_{
\over{\sum_{i=1}^\ell \gamma^{(i)}=\gamma,}
{r^{(i)}\geq 1\,\forall i}}
S((\ga^{(i)}),J_{0,1},{J_{\varepsilon,1}} )\,(-y)^{-\sum_{j<i}(r^{(j)}\ga_1^{(i)}-r^{(i)}\ga_1^{(j)})\cdot K_{S}} \prod_{i=1}^\ell \cI^{\rM}(\gamma^{(i)},y;{J})\,t^{r\Delta}.
\end{multline}
To evaluate the sum we parametrize the first Chern classes $\ga_1^{(i)}$ as $\ga^{(i)}_1=b^{(i)}C-a^{(i)}f$, such that $\sum_{i=1}^\ell a^{(i)}=\alpha$ and $\sum_{i=1}^\ell b^{(i)}=\beta$. 
Then we have from Theorem \ref{JoyceWC}  that $S((\gamma^{(i)}), J_{0,1},J_{\varepsilon,1})$ is non-vanishing if for all $i=1,\dots,\ell$, we have 
\begin{enumerate}
\item[(a)] either $$\frac{b^{(i)}}{r^{(i)}} \leq \frac{b^{(i+1)}}{r^{(i+1)}}\,\,\quad {\rm and}\,\, \quad \frac{ \sum_{j=1}^i b^{(j)}-\varepsilon a^{(j)}}{ \sum_{j=1}^i r^{(j)}}> \frac{\sum_{j=i+1}^\ell b^{(j)}-\varepsilon a^{(j)}}{\sum_{j=i+1}^\ell r^{(j)}},$$
\item[(b)] or  $$\frac{b^{(i)}}{r^{(i)}} > \frac{b^{(i+1)}}{r^{(i+1)}}\,\,\quad {\rm and}\,\, \quad \frac{\sum_{j=1}^i b^{(j)}-\varepsilon a^{(j)}}{\sum_{j=1}^i r^{(j)}}\leq \frac{\sum_{j=i+1}^\ell b^{(j)}-\varepsilon a^{(j)}}{\sum_{j=i+1}^\ell r^{(j)}}.$$
\end{enumerate}
Since $J_{\varepsilon,1}$ is a suitable polarization we deduce that $S((\gamma^{(i)}), J_{0,1},J_{\varepsilon,1})$ can only be non-vanishing if $\frac{b^{(i)}}{r^{(i)}}=\frac{\beta}{r}$ for all $i=1,\dots,\ell$. If in addition $ \frac{\sum_{j=1}^i a^{(j)}}{\sum_{j=1}^i r^{(j)}}< \frac{\sum_{j=i+1}^\ell a^{(j)}}{\sum_{j=i+1}^\ell r^{(j)}}$ for all $i$, then $S((\gamma^{(i)}), J_{0,1},J_{\varepsilon,1})=(-1)^{\ell-1}$. Thus we find in particular that for $\gamma_1\cdot f\neq 0 \mod r$, $\hm_{r,\ga_1}(y,t,J_{\varepsilon,1})=0$.

Next we make the change of variables from $a^{(i)}$ to $s^{(i)}$ defined by
$$a^{(i)}=s^{(i)}-s^{(i+1)},\quad i=1,\dots,\ell-1,\qquad a^{(\ell)}=s^{(\ell)},$$
or inversely $s^{(i)}=\sum_{j=i}^\ell a^{(j)}$. Eliminating $a^{(1)}$ using $a^{(1)}=\alpha-s^{(2)}$, we find that the summation in Eq. (\ref{HsubWC}) reduces to 
all $s^{(i)},\,i\geq 2$ such that $s^{(i)}>\frac{\sum_{j=i}^\ell r^{(j)}}{r} \alpha$. The exponent of $y$ in (\ref{HsubWC}) in terms of the new variables becomes
\be
\begin{split}
-\sum_{j<i}(r^{(j)}\ga_1^{(i)}-r^{(i)}\ga_1^{(j)})\cdot K_{S}&=-2\sum_{j<i} (r^{(j)} a^{(i)}-r^{(i)}a^{(j)}) \\
&=2\alpha (r-r^{(1)})-2\sum_{j=2}^\ell (r^{j}+r^{j-1})s^{(j)}. 
\end{split}
\ee

To evaluate the sum for $\gamma_1\cdot f=0\pmod r$, we first note that (\ref{Delta_filtration}) simplifies in the present situation to $r\Delta=\sum_{i=1}^\ell r^{(i)}\Delta^{(i)}$. 
Assuming that $|y|<1$, the geometric sums over $s^{(i)}$ can be carried out, such that
$$ 
\hm_{r,\ga_1}(J_{\varepsilon,1})
=\sum_{r^{(1)}+\dots+r^{(\ell)}=r}(-1)^{\ell-1} \frac{y^{2\alpha (r-r^{(1)})-2\sum_{i=2}^\ell (r^{(i)}+r^{(i-1)})(1+\lfloor \sum_{j=i}^\ell r^{(j)}\frac{\alpha}{r}  \rfloor)}}{\prod_{i=2}^\ell\left(1-y^{-2(r^{(i)}+r^{(i-1)})}\right)}
\cdot\prod_{i=1}^\ell \hm_{r^{(i)}}.
$$
After multiplication of numerator and denominator by $\prod_{i=2}^\ell -y^{-2(r^{(i)}+r^{(i-1)})} $ and using the identity $(r-r_1)r=\sum_{i=2}^\ell(r^{(i)}+r^{(i-1)}) \sum_{k=i}^\ell r^{(k)}$, we arrive at the desired result.
\end{proof}

\begin{remark}
$\hm_{r,\ga_1}(y,t,J_{\varepsilon,1})$ can be analytically continued beyond $|y|=1$.
\end{remark}


\subsection{Rank 2}
In this subsection, we apply the formulas discussed above to determine $\dim \IH^n(M_\ga)$ for rank 2 sheaves in a number of cases.
Considering $({r,\ga_1})=(2,0)$, Proposition \ref{HMresummed} gives for $\hm_{2,0}$
\be
\hm_{2,0}(J_{\eps,1})=\hm_2+\frac{1}{1-y^4}\hm_1^2.
\ee
Since the suitable polarization $J_{\varepsilon,1}$ is generic, we determine $h_{2,0}$ using (\ref{Htoh}),
\be
\label{h20eps}
h_{2,0}(J_{\eps,1})=\hm_{2,0}(J_{\eps,1})-\frac{1}{2} \hm_{1}^2.
\ee
Following (\ref{bOmtoOm}), the generating function of $\dim
\IH(M_\ga)$, $\sum_{\ga_2} \Omega_\ga\,t^{r\Delta(\ga)}$, is then given by
$$
(y^{-1}-y)\left(h_{2,0}(y,t;J_{\eps,1})-\frac{1}{2}H_{1}(y^2,t^2)\right).
$$
We list $b_n:=\dim \IH^n(M_\gamma)$ 
and numerical DT invariants $\omn_\ga=\Om(\ga,-1;J)$ for $S=\Sigma_{0,d}$
and for small $\ga_2$ in Table~\ref{tab:DT2}. Note that for a suitable
polarization the $\Om_\ga$ are independent of $d$. The numbers are listed up to $\dim_\mathbb{C} {M_\ga}$;
those with $n>\dim_\mathbb{C} {M_\ga}$ are determined by Poincar\'e
duality $b_n=b_{2\dim_\mathbb{C} {M_\ga}-n}$.

\begin{table}[h!]
\begin{tabular}{l|rrrrrrrrrrrrrrrrr}
$\ga_2$ & $b_0$ & $b_2$ & $b_4$ & $b_6$ & $b_8$ & $b_{10}$ & $b_{12}$ & $b_{14}$
& $b_{16}$ & $b_{18}$ & $b_{20}$ & $b_{22}$ & $b_{24}$ & $\omn_\ga$ \\
\hline
2 & 1 & 2 & 3 &  &  &  & & & & & & & &  -12  \\
3 & 1 & 3 & 8 & 16 & 20 &  &  &  &  & & & & &  -96 \\
4 & 1 & 3 & 10 & 24 & 51 & 82 & 103 &  &  &  &  & & &  -548 \\
5 & 1 & 3 & 10 & 26 & 62 & 130 & 232 & 348 & 420 & & & & &  -2464 \\
6 & 1 & 3 & 10 & 26 & 65 & 144 & 301 & 555 & 913 & 1284 & 1518 & & &  -9640 \\
7 & 1 & 3 & 10 & 26 & 65 & 147 & 318 & 642 & 1203 & 2065 & 3172 & 4280 & 4964 & -33792 
\end{tabular}
\vspace{.2cm}
\caption{Table with $b_n$ (with $n\leq
  \dim_\mathbb{C} {M}_{\ga}$) and the numerical DT invariant $\omn$ of $J_{\varepsilon,1}$-semi-stable sheaves
  on $\Sigma_{0,d}$ with $r=2$, $\ga_1=0$, and $2\leq \ga_2\leq 7$.
  }
\label{tab:DT2}
\end{table}

Tensoring a sheaf $F$ on $\Sigma_{g,d}$ with a line bundle with $\ga_1=\ga_2=0$ does not
change $\ga(F)$. As a result, the moduli space $M_\ga$ is a fibration
with fibre the moduli space of such line bundles, i.e. the Jacobian of $C$.
This further implies that the intersection Poincar\'e polynomial involves
a factor $(1-y)^{2g}$. To concisely tabulate the motivic DT
invariants, we define a new set of numbers $b_n'$ through 
\be 
\label{def_bnp}
\IP(M_\gamma)=:(1-y)^{2g} \sum_{n=0}^{2\dim M_\ga-2g} b_n'\,(-y)^n.
\ee
We list in the Tables \ref{tab:DTg=1} and \ref{tab:DTg=2} below the
$b_n'$ for $g=1$ and $g=2$ for $n\leq \dim_\mathbb{C} {M_\ga}-g$. The
numbers $b_n'$ with $n>\dim_\mathbb{C} {M_\ga}-g$ are again determined by Poincar\'e duality.


\begin{table}[h!]
\begin{tabular}{l|rrrrrrrrrrrrrrrrr}
$\ga_2$ & $b_0'$ & $b_1'$ & $b_2'$ & $b_3'$ & $b_4'$ & $b_{5}'$ & $b_{6}'$ & $b_{7}'$
& $b_{8}'$ & $b_{9}'$ & $b_{10}'$ & $b_{11}'$ & $b_{12}'$   \\
\hline
1 & 1 & 2 & 2 & 2 & 2    \\
2 & 1 & 2 & 4 & 10 & 17 & 24 & 30  & 32  & 32 & &  \\
3 & 1 & 2 & 4 & 10 & 21 & 40 & 68 & 108 & 163 & 218 & 256 & 278 & 286 \\
\end{tabular}
\vspace{.2cm}
\caption{Table with $b_n'$ (\ref{def_bnp}) of $J_{\varepsilon,1}$-semi-stable sheaves
  on $\Sigma_{1,d}$ with $r=2$, $\ga_1=0$, and $1\leq \ga_2\leq 3$. }  
\label{tab:DTg=1} 
\end{table}

\begin{table}[h!]
\begin{small}
\begin{tabular}{l|rrrrrrrrrrrrrrrrr}
$\ga_2$ & $b_0'$ & $b_1'$ & $b_2'$ & $b_3'$ & $b_4'$ & $b_{5}'$ & $b_{6}'$ & $b_{7}'$
& $b_{8}'$ & $b_{9}'$ & $b_{10}'$ & $b_{11}'$ & $b_{12}'$ &  $b_{13}'$ & $b_{14}'$ & $b_{15}'$  \\
\hline 
0 & 1 & 0 & 1   \\
1 & 1 & 4 & 3 & 12 & 21 & 20 & 23 & 24  \\
2 & 1 & 4 & 9 & 20 & 48 & 80 & 139 & 224 & 304 & 364 & 387 & 408 & \\
3 & 1 & 4 & 9 & 24 & 60 & 124 & 234 & 432 & 762 & 1216 & 1820 & 2600 & 3359 & 3904 & 4251 & 4384 \\
\end{tabular}
\end{small}
\vspace{.2cm} 
\caption{Table with $b_n'$ (\ref{def_bnp}) of $J_{\varepsilon,1}$-semi-stable sheaves
  on $\Sigma_{2,d}$ with $r=2$, $\ga_1=0$, and $0\leq \ga_2\leq 3$.}  
\label{tab:DTg=2}
\end{table} 

For other polarizations, we can determine $H_{2,\gamma_1}(
J_{m,n})$ using the wall-crossing formula of Theorem
\ref{JoyceWC}. Without loss of generality, we can set $\ga_1=\beta C-\alpha f$ with $\alpha$
and $\beta$ either 0 or 1. For $r=2$, we have either $\ell=1$ or $2$.
Setting for
$\ell=2$, $a^{(1)}=-a$ and $a^{(2)}=a+\alpha$, and similarly for
$b^{(1)}$ and $b^{(2)}$, we arrive at 
\be
\begin{split}
H_{2,\ga_1}(J_{m,n})&=H_{2,\ga_1}(J_{\eps,1})\\
&+\tfrac{1}{2}H_{1}^2\sum_{
\over{a\in \IZ+\frac{\alpha}{2}}
{b\in\IZ+\frac{\beta}{2}}} (\sgn(2nb-2ma+v)-\sgn(2b-2a\eps+v))\\
&\qquad \times  y^{-2b(d-2+2g)-4a}t^{d b^2+2ab},
\end{split} 
\ee 
with $0<v\ll \eps$. Here we used the notation $\sgn(x)-\sgn(y)$,
familiar from the theory of indefinite theta functions
\cite{Gottsche1999, MR1623706, Zwegers-thesis}.
Note that for general $J_{m,n}$ the invariants do depend on $d$. 
 
The infinite sum over $a$ is a geometric series and can be
resummed. For example for $\ga_1=0$, one has
\be
H_{2,0}(J_{m,n})=\hm_{2}+\hm_1^2\sum_{\frac{bn-am}{m}\in [0,1)} \frac{y^{-2b(d-2+2g)-4a}\,t^{d b^2+2ab}}{1-y^4\,t^{-2b}}.
\ee
Note that for given $b$, there is only one value of $a$ contributing to the sum on the right hand side.

As an example of a non-suitable polarization we take $J_{6,5}$, which is generic for small $r\Delta$. We list invariants $b_n'$ for $\Sigma_{1,0}$ and $\Sigma_{1,2}$ in the following tables
\begin{table}[h!]
\begin{tabular}{l|rrrrrrrrrrrrrrrrr}
$\ga_2$ & $b_0'$ & $b_1'$ & $b_2'$ & $b_3'$ & $b_4'$ & $b_{5}'$ & $b_{6}'$ & $b_{7}'$
& $b_{8}'$ & $b_{9}'$ & $b_{10}'$ & $b_{11}'$ & $b_{12}'$  \\
\hline 
1 & 1 & 2 & 2 & 2 & 2 & \\
2 & 1 & 2 & 4 & 10 & 18 & 26 & 32 & 34 & 34  & &  \\
3 & 1 & 2 & 4 & 10 & 21 & 40 & 70 & 116 & 179 & 242 & 286 & 310 & 318 
\end{tabular}
\vspace{.2cm}
\caption{Table with $b_n'$ (\ref{def_bnp}) of $J_{6,5}$-semi-stable sheaves
  on $\Sigma_{1,0}$ with $r=2$, $\ga_1=0$, and $1\leq \ga_2\leq 3$.}  
\label{tab:DTg=265}
\end{table}

\begin{table}[h!]
\begin{tabular}{l|rrrrrrrrrrrrrrrrr}
$\ga_2$ & $b_0'$ & $b_1'$ & $b_2'$ & $b_3'$ & $b_4'$ & $b_{5}'$ & $b_{6}'$ & $b_{7}'$
& $b_{8}'$ & $b_{9}'$ & $b_{10}'$ & $b_{11}'$ & $b_{12}'$   \\
\hline 
1 & 1 & 2 & 2 & 2 & 2 &  &     \\
2 & 1 & 2 & 4 & 10 & 17 & 24 & 30 & 32 & 32 &    \\
3 & 1 & 2 & 4 & 10 & 21 & 40 & 68 & 108 & 163 & 218 & 256 & 278 & 286   \\
\end{tabular}
\vspace{.2cm}
\caption{Table with $b_n'$ (\ref{def_bnp}) of $J_{6,5}$-semi-stable sheaves
  on $\Sigma_{1,2}$ with $r=2$, $\ga_1=0$, and $1\leq \ga_2\leq 3$.}  
\label{tab:DTg=265_1}
\end{table}

Finally we consider a non-generic polarization, namely $J=-K_{\Sigma_{g,d}}$. For this choice of polarization the torus is commutative and the invariants $\cI(\gamma,y;J)$ can be related to $\dim \IH(M_\gamma)$. The anti-canonical class $-K_{\Sigma_{g,d}}$ does only lie in the ample cone for $g=0$ and $d=0,1$. For these cases we have $-K_{\Sigma_{0,d}}=J_{2,2-d}$. Since $J_{2,2-d}$ is non-generic, we need to consider in more detail partitions $\sum_{i=1}^2 \ga^{(i)}=\gamma$ of $\gamma$ such that $p_J(\gamma^{(i)})=p_J(\gamma)$ for $i=1,\dots,\ell$. While this implies that $\gamma^{(i)}$ are proportional for generic $J$, for $J=-K_{\Sigma_{0,d}}$ this can also occur for $\gamma^{(i)}$ which are not proportional. Equation (\ref{I to Omb}) shows that we need take these partitions in to account to determine $\bar \Omega_\ga$ from $\mathcal{I}_\ga$. We consider first the case $\Sigma_{0,0}$ together with $\ga_1=0$ such that the slope vanishes. Then the $\gamma_1^{(i)}$ which satisfy $-\ga_1^{(i)}\cdot K_{\Sigma_{0,0}}=0$ are of the form $a^{(i)}(C-f)$. Similarly for $d=1$, the $\ga_1^{(i)}$ are of the form $a^{(i)}(2C-f)$ lead to a vanishing slope. As prescribed by Equation (\ref{hnongen}), we sum over all $a^{(1)}=-a^{(2)}=a\in \mathbb{Z}$ and find that for $d=0,1$ the function $h_{2,0}(y,t;-K_{\Sigma_{0,d}})$ is given by
\be
h_{2,0}(-K_{\Sigma_{0,d}})=\hm_{2,0}(J_{2,2-d})-\tfrac{1}{2} H_{1}^2\,\sum_{a\in \IZ} t^{2(1+3d)a^2}.
\ee 
For this polarization the motivic DT invariants are listed in Tables \ref{tab:DT2KL0} and \ref{tab:DT2KL1}. 

\begin{table}[h!]
\begin{tabular}{l|rrrrrrrrrrrrrrrrr}
$\ga_2$ & $b_0$ & $b_2$ & $b_4$ & $b_6$ & $b_8$ & $b_{10}$ & $b_{12}$ & $b_{14}$
& $b_{16}$ & $b_{18}$ & $b_{20}$ & $b_{22}$ & $b_{24}$ &  $\omn_\ga$ \\
\hline
2 & 1 & 2 & 3 &  &  &  & & & & & & & &  -12  \\
3 & 1 & 3 & 8 & 16 & 20 &  &  &  &  & & & & &  -96 \\
4 & 1 & 3 & 10 & 24 & 51 & 83 & 104 & & &  &  & & & -552 \\
5 & 1 & 3 & 10 & 26 & 62 & 130 & 234 & 354 & 428 & &  & &  &  -2496 \\
6 & 1 & 3 & 10 & 26 & 65 & 144 & 301 & 559 & 927 & 1316 & 1560 & &  &  -9824 \\
7 & 1 & 3 & 10 & 26 & 65 & 147 & 318 & 642 & 1209 & 2091 & 3244 & 4416 & 5140  &  -34624 \\
\end{tabular}
\vspace{.2cm}
\caption{The motivic DT invariants $b_n$ and the
numerical DT invariant $\omn_\ga$ of $J_{1,1}$-semi-stable sheaves
on $\Sigma_{0,0}$ with $r=2$, $\ga_1=0$, and $2\leq \ga_2\leq 7$.}  
\label{tab:DT2KL0} 
\end{table}

\begin{table}[h!]
\begin{tabular}{l|rrrrrrrrrrrrrrrrr}
$\ga_2$ & $b_0$ & $b_2$ & $b_4$ & $b_6$ & $b_8$ & $b_{10}$ & $b_{12}$ & $b_{14}$
& $b_{16}$ & $b_{18}$ & $b_{20}$ & $b_{22}$ & $b_{24}$ &  $\omn_\ga$ \\
\hline
2 & 1 & 2 & 3 &  &  &  & & & & & & & &  -12  \\
3 & 1 & 3 & 8 & 16 & 21 &  &  &  &  & & & & &  -98 \\
4 & 1 & 3 & 10 & 24 & 51 & 84 & 109 & & &  &  & & & -564 \\
5 & 1 & 3 & 10 & 26 & 62 & 130 & 236 & 362 & 449 & &  & &  &  -2558 \\
6 & 1 & 3 & 10 & 26 & 65 & 144 & 301 & 561 & 939 & 1352 & 1634 & &  &  -10072 \\
7 & 1 & 3 & 10 & 26 & 65 & 147 & 318 & 642 & 1212 & 2106 & 3299 & 4551 & 5379  &  -35518 \\
\end{tabular}
\vspace{.2cm}
\caption{The motivic DT invariants $b_n$ and numerical DT invariant
  $\omn_\ga$ of $J_{2,1}$-semi-stable sheaves
  on $\Sigma_{0,1}$ with $r=2$, $\ga_1=0$, and $2\leq \ga_2\leq 7$.}  
\label{tab:DT2KL1} 
\end{table}


\subsection{Rank 3}

As example of higher rank sheaves, we tabulate in this section $\dim \IH(M_\ga)$ with $r=3$ in various cases. First we consider a suitable polarization for $(r,\ga_1)=(3,0)$. Then Proposition \ref{HMresummed} evaluates to
\be
\hm_{3,0}(J_{\varepsilon,1})=\hm_3+\frac{2}{1-y^6} \hm_{1} \hm_{2}+\frac{1}{(1-y^4)^2} \hm_{1}^3.
\ee
Then the generating function
$h_{3,0}(y,t,J_{\varepsilon,1})$ (\ref{hrc1}) follows from
Theorem \ref{th:Htoh} and is given by
\be
h_{3,0}(J_{\varepsilon,1})=\hm_{3,0}(J_{\varepsilon,1})-\hm_1 \hm_{2,0}(J_{\varepsilon,1})+\frac{1}{3}\hm_{1}^3.
\ee
The generating function of $\Om_\ga$ is then given by
$h_{3,0}(y,t,J_{\varepsilon,1})-\frac{1}{3} \hm_{1}(y^3,t^3)$. We list
in Tables \ref{tab:DT3g0}, \ref{tab:DT3g1} and \ref{tab:DT3g2} the
invariants $b_n=\dim \IH^n$ and $b_n'$ defined as in Equation (\ref{def_bnp})

\begin{table}[h!]
\begin{small}
\begin{tabular}{l|rrrrrrrrrrrrrrrrr}
$\ga_2$ & $b_0$ & $b_2$ & $b_4$ & $b_6$ & $b_8$ & $b_{10}$ & $b_{12}$ & $b_{14}$
& $b_{16}$ & $b_{18}$ & $b_{20}$ & $b_{22}$ & $b_{24}$ & $b_{26}$ & $b_{28}$ &  $\omn_\ga$ \\
\hline
3 & 1 & 2 & 5 & 8 & 9 & 10  & & & & & & & &  & & 60  \\
4 & 1 & 3 & 9 & 21 & 44 & 73 & 104 & 122  & 131 & & & & & & &  885 \\
5 & 1 & 3 & 10 & 25 & 60 & 126 & 242 & 414 & 626 & 830 & 969  & 1020 & & & & 7632 \\
6 & 1 & 3 & 10 & 26 & 64 & 142 & 301 & 585 & 1076 & 1820 & 2838  & 4001 & 5104 & 5852 & 6136 & 49782 
\end{tabular}
\end{small}
\vspace{.2cm}
\caption{The motivic DT invariants $b_n$ and numerical DT invariant
  $\omn_\ga$ of $J_{\varepsilon,1}$-semi-stable sheaves
  on $\Sigma_{0,d}$ with $r=3$, $\ga_1=0$, and $3\leq \ga_2\leq 6$.}  
\label{tab:DT3g0} 
\end{table}

\begin{table}[h!]
\begin{tabular}{l|rrrrrrrrrrrrrrrrrr}
$\ga_2$ & $b_0'$ & $b_1'$ & $b_2'$ & $b_3'$ & $b_4'$ & $b_{5}'$ & $b_{6}'$ & $b_{7}'$
& $b_{8}'$ & $b_{9}'$ & $b_{10}'$ & $b_{11}'$ & $b_{12}'$   \\
\hline
1 & 1 & 2 & 2 & 2 & 2 & 2  & 2 & & & & & &   \\
2 & 1 & 2 & 4 & 10 & 18 & 28 & 44 & 62  & 74 & 80 & 84 & 88 & 90  
\end{tabular}
\vspace{.2cm}
\caption{The invariants $b_n'$ of $J_{\varepsilon,1}$-semi-stable sheaves
  on $\Sigma_{1,d}$ with $r=3$, $\ga_1=0$, and $ \ga_2=1,2$.}  
\label{tab:DT3g1} 
\end{table}

\begin{table}[h!]
\begin{tabular}{l|rrrrrrrrrrrrrrrrrr}
$\ga_2$ & $b_0'$ & $b_1'$ & $b_2'$ & $b_3'$ & $b_4'$ & $b_{5}'$ & $b_{6}'$ & $b_{7}'$
& $b_{8}'$ & $b_{9}'$ & $b_{10}'$ & $b_{11}'$ & $b_{12}'$ & $b_{13}'$ & $b_{14}'$   \\
\hline
0 & 1 & 0 & 1 & 4 & 2 & 4 & 2  & 4 & 3 \\
1 & 1 & 4 & 3 & 12 & 23 & 36 & 67 & 92  & 144 & 196 & 221 & 252 & 264 & 272 & 282   
\end{tabular}
\vspace{.2cm}
\caption{The invariants $b_n'$ of $J_{\varepsilon,1}$-semi-stable sheaves
  on $\Sigma_{2,d}$ with $r=3$, $\ga_1=0$, and $ \ga_2=0,1$.}  
\label{tab:DT3g2} 
\end{table}

Invariants for other values of $J$ can again be determined using the wall-crossing formula. 

\begin{multline*}
S((\gamma^{(i)});J_{0,1},J_{m,n})
=\frac{(-1)^{\ell-1}}{2^{\ell-1}}\prod_{i=2}^\ell \bigg( \sgn(b^{(i)}-b^{(i-1)}+v)\\
-\sgn\bigg(\sum_{j=1}^{i-1} r^{(j)}\sum_{k=i}^\ell [r^{(k)} b^{(k)}n-a^{(k)}m]-\sum_{k=i}^\ell r^{(k)}\sum_{j=1}^{i-1} [r^{(j)}b^{(j)}n-a^{(j)}m] +v\bigg)\bigg)
\end{multline*}

To determine the contribution of partitions of $\gamma$ with $(r^{(1)}, r^{(2)})=(2,1)$, we set $\ga^{(1)}_1=-2b C + a f$ and $\ga_1^{(2)}=2bC-af$
\be
\hm_2 \hm_1\sum_{a,b\in \IZ} (\sgn(2bn-am+v)-\sgn(b+v))\, y^{-6b(2g-2+d)-6a}\, t^{3d b^2+3ab},
\ee
with $0<v\ll 1$. This can be resummed to 
\be
2\, \hm_2 \hm_1\sum_{\frac{2bn-am}{m}\in [0,1)}\frac{y^{-6b(2g-2+d)-6a}\,t^{3d b^2+3ab}}{1-y^6\,t^{-3b}}.
\ee 
The contribution due to $(r^{(1)}, r^{(2)})=(1,2)$ is identical to the above.

For the contribution of partitions with $r^{(i)}=1$ for $i=1,2,3$, we set $\ga^{(1)}=-\ga^{(2)}-\ga^{(3)}$ and $b^{(2)}=b_1$, $b^{(3)}=b_2$, $a^{(2)}=a_1$, $a^{(3)}=a_2$. Then we arrive at 
\be
\begin{split}
\tfrac{1}{4} \hm_1^3\sum_{b_j, a_j\in \IZ}& (\sgn((b_1+b_2)n-(a_1+a_2)m+v)-\sgn(2b_1+b_2+v) )\\
&\times (\sgn(b_2n-a_2m+v)- \sgn(b_2-b_1+v)) \\
&\times y^{-2(b_1+2b_2)(2g-2+d)-4(a_1+2a_2)}\,t^{d (b_1^2+b_2^2+b_1b_2)+2a_1b_1+2a_2b_2+b_1a_2+b_2a_1}.
\end{split}
\ee
This can be resummed to 
\be
\begin{split}
&\hm_1^3 \sum_{\frac{(b_1+b_2)n-(a_1+a_2)m}{m}\in [0,1)}
\sum_{\frac{b_2n-a_2m}{m}\in [0,1)} \\
&\qquad \times \frac{y^{-2(b_1+2b_2)(2g-2+d)-4(a_1+2a_2)}\,t^{d (b_1^2+b_2^2+b_1b_2)+2a_1b_1+2a_2b_2+b_1a_2+b_2a_1}}{(1-y^2t^{-2b_1-b_2})(1-y^2 t^{b_1-b_2})}.
\end{split}
\ee
We list in Tables \ref{tab:DTg=1l=0365} and \ref{tab:DTg=1l=2365} the invariants for the surfaces $\Sigma_{1,0}$ and $\Sigma_{1,2}$ with polarization $J_{6,5}$ 

\begin{table}[h!]
\begin{tabular}{l|rrrrrrrrrrrrr}
$\ga_2$ & $b_0'$ & $b_1'$ & $b_2'$ & $b_3'$ & $b_4'$ & $b_{5}'$ & $b_{6}'$ & $b_{7}'$
& $b_{8}'$ & $b_{9}'$ & $b_{10}'$ & $b_{11}'$ & $b_{12}'$\\
\hline 
1 & 1 & 2 & 2 & 2 & 2 & 2 & 2\\
2 & 1 & 2 & 4 & 10 & 19 & 32 & 52 & 74 & 89  & 96 & 100 & 104 & 106\\  
\end{tabular}
\vspace{.2cm}
\caption{Table with $b_n'$ of $J_{6,5}$-semi-stable sheaves
  on $\Sigma_{1,0}$ with $r=3$, $\ga_1=0$, and $0\leq \ga_2\leq 2$.}  
\label{tab:DTg=1l=0365}
\end{table}

\begin{table}[h!]
\begin{tabular}{l|rrrrrrrrrrrrr}
$\ga_2$ & $b_0'$ & $b_1'$ & $b_2'$ & $b_3'$ & $b_4'$ & $b_{5}'$ & $b_{6}'$ & $b_{7}'$
& $b_{8}'$ & $b_{9}'$ & $b_{10}'$ & $b_{11}'$ & $b_{12}'$  \\
\hline 
1 & 1 & 2 & 2 & 2 & 2 & 2 & 2\\
2 & 1 & 2 & 4 & 10 & 18 & 28 & 44 & 62 & 74 & 80 & 84 & 88 & 90\\
\end{tabular}
\vspace{.2cm}
\caption{Table with $b_n'$ (\ref{def_bnp}) of $J_{6,5}$-semi-stable sheaves
  on $\Sigma_{1,2}$ with $r=3$, $\ga_1=0$, and $1\leq \ga_2\leq 3$.}  
\label{tab:DTg=1l=2365}
\end{table}

Finally, we consider the polarization $J=-K_{\Sigma_{0,d}}$ for $S=\Sigma_{0,d}$ with
$d=0,1$. As in the case of $r=2$, sheaves with equal slope do not
necessarily have proportional Chern character for
this non-generic polarization. Besides sheaves
with Chern character proportional to $(3,0,\ga_2)$, all sheaves with
$\gamma_1^{(i)}=a^{(i)}(C-f)$, have slope
0 for $d=0$. For $r^{(i)}=2$, we need to distinguish between $a^{(i)}$ even and odd,
since the invariants differ in these cases. For $d=1$ the same
comments apply except that in this case sheaves with
$\gamma_1^{(i)}=a^{(i)}(2C-f)$ have a vanishing slope.

Subtracting the contributions of these sheaves from $\hm_{3,0}(J_{2,2-d})$ as in Equation (\ref{hnongen}), we arrive at
$h_{3,0}(J_{2,2-d})$ for $d=0,1$
\be
\begin{split}
h_{3,0}(J_{2,2-d})=&\hm_{3,0}(J_{2,2-d})-\hm_1\, \hm_{2,0}(J_{2,2-d})\, \sum_{a\in \IZ} t^{6(1+3d)a^2 }\\
& -  \hm_1\, \hm_{2,(d-1)C+f}(J_{2,2-d})\, \sum_{a\in \IZ} t^{6(1+3d)(a^2 +a+\frac{1}{4})}\\
&+\frac{1}{3} \hm_{1}^3\sum_{a_1,a_2\in \IZ} t^{2(1+3d)(a_1^2+a_2^2+a_1a_2)} .
\end{split}
\ee
For the first few values of $\ga_2$, the invariants are listed in
Tables \ref{tab:DT3JK0} and \ref{tab:DT3JK1}.


\begin{table}[h!]
\begin{small}
\begin{tabular}{l|rrrrrrrrrrrrrrrrr}
$\ga_2$ & $b_0$ & $b_2$ & $b_4$ & $b_6$ & $b_8$ & $b_{10}$ & $b_{12}$ & $b_{14}$
& $b_{16}$ & $b_{18}$ & $b_{20}$ & $b_{22}$ & $b_{24}$ & $b_{26}$ & $b_{28}$ &  $\omn_\ga$ \\
\hline
3 & 1 & 2 & 5 & 8 & 9 & 10  & & & & & & & &  & & 60  \\
4 & 1 & 3 & 9 & 21 & 44 & 74 & 106 & 124  & 133 & & & & & & &  897 \\
5 & 1 & 3 & 10 & 25 & 60 & 126 & 244 & 421 & 644 & 859 & 1003  & 1056 & & & & 7848 \\
6 & 1 & 3 & 10 & 26 & 64 & 142 & 301 & 588 & 1088 & 1859 & 2931  & 4177 & 5363 & 6162 & 6464 & 51894 
\end{tabular}
\end{small}
\vspace{.2cm}
\caption{The motivic DT invariants $b_n$ and numerical DT invariant
  $\omn_\ga$ of $J_{1,1}$-semi-stable sheaves
  on $\Sigma_{0,0}$ with $r=3$, $\ga_1=0$, and $3\leq \ga_2\leq 6$.}  
\label{tab:DT3JK0} 
\end{table}

\begin{table}[h!]
\begin{small}
\begin{tabular}{l|rrrrrrrrrrrrrrrrr}
$\ga_2$ & $b_0$ & $b_2$ & $b_4$ & $b_6$ & $b_8$ & $b_{10}$ & $b_{12}$ & $b_{14}$
& $b_{16}$ & $b_{18}$ & $b_{20}$ & $b_{22}$ & $b_{24}$ & $b_{26}$ & $b_{28}$ &  $\omn_\ga$ \\
\hline
3 & 1 & 2 & 5 & 8 & 10 & 11  & & & & & & & &  & & 63  \\
4 & 1 & 3 & 9 & 21 & 44 & 75 & 111 & 137  & 149 & & & & & & &  951 \\
5 & 1 & 3 & 10 & 25 & 60 & 126 & 245 & 426 & 665 & 914 & 1107  & 1179 & & & & 8343 \\
6 & 1 & 3 & 10 & 26 & 64 & 142 & 301 & 589 & 1093 & 1880 & 3000  & 4363 & 5753 & 6789 & 7190 & 55218 
\end{tabular}
\end{small}
\vspace{.2cm}
\caption{The motivic DT invariants $b_n$ and numerical DT invariant
  $\omn_\ga$ of $J_{2,1}$-semi-stable sheaves
  on $\Sigma_{0,1}$ with $r=3$, $\ga_1=0$, and $3\leq \ga_2\leq 6$.}  
\label{tab:DT3JK1} 
\end{table}


\providecommand{\bysame}{\leavevmode\hbox to3em{\hrulefill}\thinspace}
\providecommand{\href}[2]{#2}

\end{document}